\author{Colin Crowley$^1$} 
\email{cwcrowley@wisc.edu} 
\address{$^1$UW-Madison Department of Mathematics, Van Vleck Hall, 480 Lincoln Drive, Madison, WI 53706} 
\author{Noah  Giansiracusa$^2$} 
\email{ngiansi1@swarthmore.edu}
\address{$^2$Department of Mathematics and Statistics, Swarthmore College, 500 College Ave, Swarthmore, PA 19081}
\author{Joshua Mundinger$^3$} 
\email{mundinger@uchicago.edu}
\address{$^3$Department of Mathematics, University of Chicago, 5734 S. University Avenue, Room 108, Chicago, IL 60637}
\def\JPicScale{1.0}\fi
\theoremstyle{plain}
   \newtheorem{theorem}{Theorem}[section]
   \newtheorem{proposition}[theorem]{Proposition}     
   \newtheorem{lemma}[theorem]{Lemma}
   \newtheorem{corollary}[theorem]{Corollary}
\theoremstyle{definition}
   \newtheorem{example}[theorem]{Example}
   \newtheorem{definition}[theorem]{Definition}
   \newtheorem{remark}[theorem]{Remark}
   \newtheorem{conv}[theorem]{Convention}
\newcommand{\B}{\mathbb{B}}
\newcommand{\R}{\mathbb{R}}
\newcommand{\T}{\mathbb{T}}
\newcommand{\supp}{\mathrm{supp}}
\newcommand{\Sym}{\mathrm{Sym}}
\newcommand{\GL}{\operatorname{GL}}
\newcommand{\Gr}{\operatorname{Gr}}
\newcommand{\Hom}{\operatorname{Hom}}
\newcommand{\End}{\operatorname{End}}
\newcommand{\im}{\operatorname{Im}}
\newcommand{\id}{\operatorname{id}}
\newcommand{\cl}{\operatorname{cl}}
\newcommand{\surj}{\twoheadrightarrow}
\newcommand{\tropker}{\mathrm{tropker}}
\newcommand{\stsum}{+^{st}}
\newcommand{\dual}{\vee}
\newcommand{\ddual}{{\dual\dual}}
\DeclareMathOperator{\rk}{rk}
\newcommand{\bend}[1]{\mathscr{B}\left(#1\right)}
\newcommand{\Bdual}[1]{(\B^{#1})^\dual}
\numberwithin{theorem}{section}
\newcommand{\Ifandonlyif}[2]{$\text{(\ref{#1})}\iff\text{(\ref{#2})}$}
\begin{document}
\title{A module-theoretic approach to matroids}

\begin{abstract}
Speyer recognized that matroids encode the same data as a special class of tropical linear spaces and Shaw interpreted tropically certain basic matroid constructions; additionally, Frenk developed the perspective of tropical linear spaces as modules over an idempotent semifield.  All together, this provides bridges between the combinatorics of matroids, the algebra of idempotent modules, and the geometry of tropical linear spaces.  The goal of this paper is to strengthen and expand these bridges by systematically developing the idempotent module theory of matroids.  Applications include a geometric interpretation of strong matroid maps and the factorization theorem; a generalized notion of strong matroid maps, via an embedding of the category of matroids into a category of module homomorphisms; a monotonicity property for the stable sum and stable intersection of tropical linear spaces; a novel perspective of fundamental transversal matroids; and a tropical analogue of reduced row echelon form.
\end{abstract}

\maketitle

\section{Introduction}

\subsection{Overview}
Matroids, first introduced by Whitney in 1935, provide a combinatorial abstraction of linear dependence.  They have a remarkable tendency to appear in diverse settings across mathematics, pure and applied \cite{welsh76,lawler76,white86,recski89,white92,BGW03,murota10,oxley11,pitsoulis14}.  The first main bridge to algebraic geometry came in the 80s with the advent of matroid polytopes, linking matroids to toric varieties and the Grassmannian \cite{GGMS87,GS87,kapranov-chow}.  A more recent connection to algebraic geometry stems from the work of Speyer and Sturmfels showing that valuated matroids, a generalization of matroids in which each basis carries a real number, are equivalent to tropical linear spaces \cite{speyer08,speyer-sturmfels,maclagan-sturmfels,fink-chow}.  In his PhD thesis, Frenk carefully studied the module theory underlying this latter connection \cite{frenk13}, and this was used in \cite{giansiracusa17} to introduce an idempotent exterior algebra reinforcing Speyer's philosophy that valuated matroids are tropical Pl\"ucker vectors. 

 In this paper, we step back from valuated matroids to ordinary matroids so that more of the extensive literature on matroids can be brought into the realm of idempotent modules and tropical linear spaces.  In doing so, we clarify the relation between matroids and tropical linear algebra, gain insight into matroids by exploiting this module-theoretic perspective, and gain insight into tropical linear spaces by drawing more heavily from matroid theory.

\subsection{Summary of results}
Let $\B = \{0,1\}$ denote the two-element idempotent semifield; that is, $\B$ satisfies all the axioms of a field except for the existence of additive inverses, and the ``characteristic one'' condition $1+1=1$ holds.  Associated to a matroid $M$ on the ground set $E$ is a submodule $L_M \subseteq \B^E$ spanned by the indicator vectors of the cocircuits, and a quotient module $(\B^E)^\vee \twoheadrightarrow Q_M$ introduced in \cite{giansiracusa17} satisfying $Q_M^\vee = L_M$, where the superscript $\vee$ denotes the dual module obtained by applying the functor $\Hom(-,\B)$.  The module $L_M$ (resp. $Q_M$) doesn't uniquely determine the matroid $M$, but with the extra data of its embedding in $\B^E$ (resp. quotient presentation) it does.  If $\T = \R\cup\{-\infty\}$ denotes the tropical semifield, then $L_M \otimes_\B \T \subseteq \T^n$ is the tropical linear space associated to $M$, so by slight abuse of terminology we shall refer to $L_M$ itself as a tropical linear space (and we call tropical linear spaces of this form \emph{constant-coefficient} tropical linear spaces).  

\begin{theorem}\label{thm-intro:dual}
The quotient module $Q_M$ is isomorphic to the lattice of flats of $M$, and it is the module-theoretic dual of the corresponding tropical linear space: $Q_M = L_M^\vee$.
\end{theorem}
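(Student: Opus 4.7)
The plan is to identify $L_M^\vee$ with the lattice of flats $\mathcal{L}(M)$, and then match this with $Q_M$ using the given relation $Q_M^\vee = L_M$. The key combinatorial input is the cryptomorphism between flats and unions of cocircuits: since cocircuits are exactly the complements of hyperplanes and flats are intersections of hyperplanes, the elements of $L_M$ are exactly the indicators $\chi_{E \setminus F}$ of complements of flats, and so as a set $L_M$ is in canonical bijection with $\mathcal{L}(M)$.

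To compute $L_M^\vee$, I would use that $\B^E$ is self-dual under the pairing $\langle a, \chi_S\rangle = 1 \iff \supp(a) \cap S \neq \emptyset$, and that $\B$ is a complete lattice, hence injective as a $\B$-module. Consequently, dualizing the inclusion $L_M \hookrightarrow \B^E$ yields a surjection $\B^E \cong (\B^E)^\vee \twoheadrightarrow L_M^\vee$. Two vectors $a, b \in \B^E$ restrict to the same homomorphism on $L_M$ iff, for every flat $F$ of $M$, $\supp(a) \subseteq F \iff \supp(b) \subseteq F$, equivalently $\cl(\supp a) = \cl(\supp b)$. Hence $L_M^\vee$ is in bijection with the flats of $M$, with induced join given by closure-of-union—precisely the join in $\mathcal{L}(M)$. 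So $L_M^\vee \cong \mathcal{L}(M)$.

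To identify $Q_M$ with $L_M^\vee$, I would observe that both are quotients of $(\B^E)^\vee$: the first by construction and the second via the restriction surjection just analyzed. Dualizing each produces the same submodule $L_M \hookrightarrow \B^E$, the first by hypothesis $Q_M^\vee = L_M$ and the second tautologically. I would complete the identification by invoking the explicit presentation of $Q_M$ from \cite{giansiracusa17} and checking directly that its defining relations cut out exactly the closure congruence $a \sim b \iff \cl(\supp a) = \cl(\supp b)$; this amounts to the standard fact that iterated circuit elimination realizes matroid closure. Combined with the previous paragraph, this gives $Q_M \cong L_M^\vee \cong \mathcal{L}(M)$.

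The main obstacle is the reflexivity gap for $\B$-modules: the canonical map $Q_M \to Q_M^{\vee\vee} = L_M^\vee$ is not automatically an isomorphism in this category, so the identity $Q_M^\vee = L_M$ alone does not formally force $Q_M = L_M^\vee$. Routing around this by the direct combinatorial match between the defining relations of $Q_M$ and the closure congruence is the cleanest approach; the reflexivity of $Q_M$ then follows a posteriori from its identification with $\mathcal{L}(M)$, which is manifestly reflexive as a finite lattice.
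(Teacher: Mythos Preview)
Your proposal is correct and uses the same ingredients as the paper: the identification of elements of $L_M$ with complements of flats, the injectivity of $\B$ as a $\B$-module to get surjectivity onto $L_M^\vee$, and the verification that the bend relations defining $Q_M$ generate exactly the closure congruence $a\sim b \iff \cl(\supp a)=\cl(\supp b)$. The only difference is organizational: the paper first proves $Q_M\cong\mathcal{L}(M)$ directly (Theorem~\ref{theorem: Q-is-flats}) and then establishes reflexivity $Q_M\to Q_M^{\vee\vee}$ by using that description together with the cocircuit description of $L_M$ to build separating functionals (Theorem~\ref{theorem: algebraic-duality}), whereas you compute $L_M^\vee\cong\mathcal{L}(M)$ first and then match it with $Q_M$ via the same closure-congruence check. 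Your explicit acknowledgment of the reflexivity gap and the route around it is exactly what the paper does as well.
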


Most aspects of this result already appear in the literature (see \cite[Proposition 4.4 and Remark 4.5]{maclagan-rincon}, \cite[Theorem 1]{ardila-klivans}, and \cite{frenk13}) but we provide a streamlined proof firmly rooted in matroid theory.   We also give a direct matroid-theoretic proof of the following, which Frenk established more generally for valuated matroids \cite[Lemma 4.1.11]{frenk13}:

\begin{theorem}\label{thm-intro:minors}
For a subset $T\subseteq E$, the tropical linear space of the matroid contraction $M/T$ is \[L_{M/T} = L_M\cap \B^{E-T},\] where $\B^{E-T} \subseteq \B^E$ is a coordinate subspace, and the tropical linear space of the deletion $M\backslash T$ is \[L_{M\backslash T} = \pi_{E-T}(L_M),\] where $\pi_{E-T} : \B^E \rightarrow \B^{E-T}$ is coordinate linear projection.
\end{theorem}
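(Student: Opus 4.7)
The plan is to reduce both identities to the combinatorics of flats by means of a single reformulation of $L_M$, then carry out a short calculation for each.

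The reformulation I would establish first is: $v \in L_M$ if and only if $E - \supp(v)$ is a flat of $M$. This follows because, in characteristic-one arithmetic, a sum of cocircuit indicators has support equal to the union of those cocircuits; so $v \in L_M$ iff $\supp(v)$ is a union of cocircuits, and via complementation this is equivalent to $E - \supp(v)$ being an intersection of hyperplanes, hence a flat. The nontrivial direction of the ``flat equals intersection of containing hyperplanes'' step uses the standard fact that any non-loop lies in some cocircuit, applied inside the contraction $M/F$ for $F$ the flat in question.

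Once this reformulation is in hand, each identity becomes essentially formal, provided one recalls the standard descriptions: the flats of $M/T$ are precisely the sets $F - T$ for $F$ a flat of $M$ containing $T$ (a consequence of $\cl_{M/T}(G) = \cl_M(G \cup T) - T$), and the flats of $M \backslash T$ are precisely the sets $F \cap (E - T)$ for $F$ a flat of $M$ (from $\cl_{M\backslash T}(G) = \cl_M(G) \cap (E - T)$). For the contraction statement, writing $F = E - \supp(v)$, the condition $v \in L_M \cap \B^{E-T}$ is equivalent to $F$ being a flat of $M$ with $F \supseteq T$, equivalently to $F - T = (E - T) - \supp(v)$ being a flat of $M/T$, i.e.\ to $v \in L_{M/T}$. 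For the deletion statement, $w = \pi_{E-T}(v)$ with $v \in L_M$ translates directly into the identity $(E - T) - \supp(w) = (E - \supp(v)) \cap (E - T)$, exhibiting $(E-T)-\supp(w)$ in the form $F \cap (E - T)$ for a flat $F$ of $M$, which is exactly the condition $w \in L_{M \backslash T}$; conversely, given such a presentation, $v = \chi_{E - F}$ is an explicit preimage in $L_M$.

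I do not anticipate any serious obstacle. The main conceptual content is the reformulation of $L_M$ in terms of flats, after which the verification is a transparent set-theoretic check with complements and projections. Frenk's valuated-matroid analogue genuinely requires tropical care with valuations, but the constant-coefficient case handled here collapses cleanly to the combinatorics of the lattice of flats established in Theorem~\ref{thm-intro:dual}.
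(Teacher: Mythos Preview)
Your argument is correct and runs on the same underlying fact as the paper's proof: elements of $L_M$ are exactly the indicator vectors of complements of flats (equivalently, unions of cocircuits), and then one just has to track how flats (or cocircuits) behave under contraction and deletion. The paper does the contraction case directly via cocircuits, citing that the cocircuits of $M/T$ are the cocircuits of $M$ disjoint from $T$, and for deletion mixes the two viewpoints (flats for one inclusion, cocircuits for the other). Your version is slightly more uniform in that you pass to the flat reformulation once and then handle both cases symmetrically using the closure formulas $\cl_{M/T}(G)=\cl_M(G\cup T)-T$ and $\cl_{M\backslash T}(G)=\cl_M(G)\cap(E-T)$; this buys a cleaner parallel treatment, though the content is the same.
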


Since minors are iterations of the contraction and deletion operations, this provides a geometric interpretation of the tropical linear space associated to any minor of a matroid (see also \cite[Proposition 2.22]{shaw13}).  By Theorem \ref{thm-intro:dual}, a similar statement holds for the quotient modules $Q_M$.

Our module-theoretic perspective leads to a new characterization of strong matroid maps in terms of tropical linear spaces (stated here in a slightly weakened version for readability---in \S\ref{subsec:strong} we allow for a distinguished loop):

\begin{theorem}\label{thm-intro:strong}
Let $M$ and $N$ be matroids on ground sets $E$ and $F$, respectively, let $f: E \rightarrow F$ be any function, and define $f_* : (\B^E)^\vee \rightarrow (\B^F)^\vee$ by $f_*(x_i) = x_{f(i)}$, where $\{x_i\}$ is the dual basis to the standard basis.  Then $f$ is a strong matroid map $M \rightarrow N$ if and only if $f_*^\vee(L_N) \subseteq L_M$.
\end{theorem}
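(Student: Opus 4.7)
\emph{Proof plan.} The strategy is to translate the module-theoretic condition $f_*^\vee(L_N) \subseteq L_M$ into a statement about subsets and then recognize it as the defining condition of a strong matroid map. The proof breaks cleanly into three steps: unpack the dual map, unpack the module $L_M$, combine.

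First, I would identify $f_*^\vee$ explicitly. Since $\B^E$ and $\B^F$ are finitely generated free $\B$-modules, the canonical double-dual identifications let me regard $f_*^\vee$ as a map $\B^F \to \B^E$. Evaluating at the dual basis, for every $A\subseteq F$ and every $i\in E$,
\[ (f_*^\vee(1_A))_i = f_*(x_i)(1_A) = x_{f(i)}(1_A) = 1_A(f(i)), \]
so $f_*^\vee(1_A) = 1_{f^{-1}(A)}$. In other words, $f_*^\vee$ is simply the set-theoretic pullback along $f$.

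Next I would describe $L_M$ combinatorially. By definition $L_M$ is the $\B$-span of the cocircuit indicators $1_{E\setminus H}$ for $H$ a hyperplane of $M$. Because addition in $\B^E$ is coordinatewise maximum, a join of such indicators is $1_{E\setminus(H_1\cap\cdots\cap H_k)}$. Invoking the standard matroid-theoretic fact that every flat of $M$ is the intersection of the hyperplanes containing it (with $F=E$ as the empty intersection, accounting for the zero vector) then yields
\[ L_M = \{\, 1_{E\setminus F} : F \text{ is a flat of } M\,\}. \]
This is essentially the flat-lattice content of Theorem \ref{thm-intro:dual} read off on the $L_M$ side.

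Combining the two descriptions, $f_*^\vee(L_N) \subseteq L_M$ says precisely that for every flat $G$ of $N$,
\[ f_*^\vee(1_{F\setminus G}) = 1_{E\setminus f^{-1}(G)} \in L_M, \]
which in turn holds if and only if $f^{-1}(G)$ is a flat of $M$. This is exactly the defining condition of a strong matroid map $f \colon M \to N$ in the stated weakened form, completing the equivalence. The one step carrying genuine matroid-theoretic content is the combinatorial description of $L_M$; this rests on the representation of flats as intersections of hyperplanes, a short consequence of the semimodularity of the flat lattice that I would either cite or prove in a line. Everything else is bookkeeping about dual bases in the free $\B$-module setting.
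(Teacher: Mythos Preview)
Your proof is correct and follows essentially the same route as the paper's Proposition \ref{prop: L-Q-strong-maps}: both compute $f_*^\vee$ as the set-theoretic pullback on indicator vectors, identify the elements of $L_M$ with the indicators of complements of flats (you via intersections of hyperplanes, the paper via unions of cocircuits---the same fact in complementary form), and then read off the strong-map condition directly.
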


In the special case where $E=F$ and $f$ is the identity map, a strong matroid map is the same as a matroid quotient, and in this case the preceding result is due to Shaw \cite[Lemma 2.21]{shaw13}.  The factorization theorem for strong maps \cite[Theorem 8.2.7]{white86}, which states that every strong matroid map factors as an embedding followed by a contraction, 
in this case translates---via Theorems \ref{thm-intro:minors} and \ref{thm-intro:strong}---to the following property of constant-coefficient tropical linear spaces:
\begin{theorem}\label{thm-intro:existsL}
Let $L$ and $L'$ be tropical linear spaces in $\B^E$.  Then $L \subseteq L'$ if and only if there exists a tropical linear space $L''\subseteq \B^{E\sqcup F}$ with $|F|=\dim(L')-\dim(L)$ such that $L= L'' \cap \B^E$ and $L' = \pi_{E}(L'')$.
\end{theorem}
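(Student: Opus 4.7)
The plan is to translate the statement into classical matroid language via Theorems \ref{thm-intro:strong} and \ref{thm-intro:minors}, and then invoke the factorization theorem for strong matroid maps. I begin by writing $L = L_M$ and $L' = L_{M'}$ for uniquely determined matroids $M$ and $M'$ on $E$ (using the bijection between constant-coefficient tropical linear spaces in $\B^E$ and matroids on $E$), and recording that $\dim L_N = \rk N$ for any matroid $N$, so that $\dim L' - \dim L = \rk M' - \rk M$.

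For the forward direction, suppose $L \subseteq L'$. Apply Theorem \ref{thm-intro:strong} with $f = \id_E$, so that $f_*^\vee$ is the identity; the inclusion $L_M \subseteq L_{M'}$ is then equivalent to $\id_E$ being a strong matroid map $M' \to M$, i.e., to $M$ being a matroid quotient of $M'$. The factorization theorem for strong maps then produces a matroid $M''$ on $E \sqcup F$ with $|F| = \rk M' - \rk M$ such that $M = M''/F$ and $M' = M'' \setminus F$. Setting $L'' := L_{M''}$ and applying Theorem \ref{thm-intro:minors} with $T = F$ yields
\[
L = L_{M''/F} = L'' \cap \B^E \quad \text{and} \quad L' = L_{M'' \setminus F} = \pi_E(L''),
\]
with the required $|F| = \dim L' - \dim L$ following from the rank--dimension identity above.

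For the reverse direction no matroid theory is needed at all: given such an $L''$, any $v \in L'' \cap \B^E$ satisfies $v_i = 0$ for every $i \in F$, so $v$ coincides with its projection $\pi_E(v) \in \pi_E(L'')$; hence $L = L'' \cap \B^E \subseteq \pi_E(L'') = L'$.

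Because the real content is absorbed into the factorization theorem together with Theorems \ref{thm-intro:minors} and \ref{thm-intro:strong}, no substantial obstacle is expected; the argument is essentially a dictionary translation. The one place I would want to be careful is verifying that the extension $M''$ supplied by the factorization theorem has ground set of the \emph{precise} size $|E| + \rk M' - \rk M$, so that the parameter $|F| = \dim L' - \dim L$ comes out on the nose rather than as an inequality.
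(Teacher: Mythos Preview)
Your proposal is correct and follows essentially the same approach as the paper: the result appears there as Corollary \ref{corollary: linear-space-inclusion}, obtained by combining Corollary \ref{cor: L-Q-strong-identity} (the $\id_E$ special case of the strong-map characterization), Proposition \ref{prop:matroidquot} (Oxley's Proposition 7.3.6, which supplies the extension with $|T| = \rk M' - \rk M$ exactly, addressing your final concern), and Theorem \ref{theorem: L-minors}. Your reverse direction via the elementary inclusion $L'' \cap \B^E \subseteq \pi_E(L'')$ is slightly more direct than invoking the ``if'' half of Proposition \ref{prop:matroidquot}, but the forward direction---where all the content lies---is identical.
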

This is a direct analogue of a property of linear spaces over a field.  Moreover, when $L\subseteq L'$, the tropical linear space $L''$ is a tropical modification of $L'$ along $L$ (cf. \cite[\S2.4]{shaw13}), so in essence the factorization theorem for strong maps with $\id : E \rightarrow E$ is the existence of arbitrary-codimension tropical modifications in the case of constant-coefficient degree one tropical varieties.

It has been recognized that there are problems with the category of matroids equipped with strong maps---primarily, it lacks important limits and not enough matroidal constructions are functorial \cite{alhawary01,heunen17}.  Module theory provides a potential partial remedy:
\begin{theorem}
The category of matroids with strong maps embeds faithfully as a (non-full) subcategory of the category of quotients $(\B^E)^\vee \twoheadrightarrow Q_M$ with morphisms given by commutative squares:
\[\xymatrix{(\B^E)^\vee \ar@{->>}[d] \ar[r] & (\B^F)^\vee \ar@{->>}[d]\\ Q_M \ar[r] & Q_N.}\]  It also embeds contravariantly in the category of embedded tropical linear spaces $L_M \subseteq \B^E$ with morphisms given by $\B$-module maps $\B^E \rightarrow \B^F$ that restrict to $\B$-module maps $L_M \rightarrow L_N$.
\end{theorem}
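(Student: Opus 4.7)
The plan is to construct a covariant functor $\Phi$ into the stated category of quotients and a contravariant functor $\Psi$ into the stated category of embedded tropical linear spaces, verify each is well-defined and functorial via Theorems~\ref{thm-intro:dual} and~\ref{thm-intro:strong}, and then check faithfulness and exhibit a non-fullness witness. On objects, $\Phi$ sends $M$ on $E$ to the surjection $q_M \colon (\B^E)^\vee \twoheadrightarrow Q_M$, while $\Psi$ sends $M$ to $L_M \subseteq \B^E$. On morphisms, a strong map $f \colon M \to N$ is sent by $\Phi$ to the commutative square whose top arrow is the map $f_*$ from Theorem~\ref{thm-intro:strong}, and by $\Psi$ to $f_*^\vee \colon \B^F \to \B^E$ in the opposite direction.

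Well-definedness is the main technical step. For $\Psi$, Theorem~\ref{thm-intro:strong} directly yields $f_*^\vee(L_N) \subseteq L_M$, so $f_*^\vee$ restricts to a morphism $L_N \to L_M$ in the target category, as needed for contravariance. For $\Phi$, I need the bottom arrow $\bar{f}_* \colon Q_M \to Q_N$. My plan is to define $\bar{f}_*$ as the dual of the restricted map $f_*^\vee|_{L_N} \colon L_N \to L_M$, under the identifications $Q_M = L_M^\vee$ and $Q_N = L_N^\vee$ from Theorem~\ref{thm-intro:dual}; commutativity $\bar{f}_* \circ q_M = q_N \circ f_*$ then follows by evaluating both sides on $x \in (\B^E)^\vee$ and pairing with arbitrary $w \in L_N$, via the adjunction identity $f_*(x)(w) = x(f_*^\vee(w))$ and the fact that $f_*^\vee(w) \in L_M$. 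Functoriality on both sides is routine since $(g \circ f)_*(x_i) = x_{g(f(i))} = g_*(f_*(x_i))$, and $(-)^\vee$ reverses composition.

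Faithfulness is immediate: distinct $f \neq f'$ force $f_*(x_i) = x_{f(i)} \neq x_{f'(i)} = f'_*(x_i)$ for some $i$, so the top arrows of the squares differ (for $\Phi$) and so do the dual maps $f_*^\vee$ (for $\Psi$, since $f^{-1}(j) = \supp f_*^\vee(e_j)$ recovers $f$). For non-fullness of both $\Phi$ and $\Psi$, I will exhibit a small explicit example: take $M = N = U_{2,3}$ on $[3]$ (whose flats are $\emptyset$, the three singletons, and $[3]$) and let $\phi$ be the $\B$-module endomorphism of $(\B^{[3]})^\vee$ with $x_1 \mapsto x_1+x_2$ and $x_2, x_3$ fixed; a short check on the five flats shows $\phi$ descends to a well-defined $\B$-module endomorphism of $Q_{U_{2,3}}$, yet $\phi$ is not of the form $f_*$ since $x_1+x_2$ is not a basis vector. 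The hardest step I anticipate is the descent for $\Phi$: because quotients of $\B$-modules are determined by congruences rather than by submodule kernels, a direct congruence chase on $(\B^E)^\vee$ would be cumbersome, and the cleanest argument I see is the dualization route above, which reduces descent to the submodule inclusion $f_*^\vee(L_N) \subseteq L_M$ supplied by Theorem~\ref{thm-intro:strong}.
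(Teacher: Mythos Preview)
Your proposal is correct and follows essentially the same route as the paper: functoriality of $\Phi$ is obtained from the inclusion $f_*^\vee(L_N)\subseteq L_M$ (the paper's Proposition~\ref{prop: L-Q-strong-maps}) by applying the duality $Q_M=L_M^\vee$ (Theorem~\ref{theorem: algebraic-duality}), exactly as you do, and faithfulness is the injectivity of $f\mapsto f_*$. The only cosmetic difference is in the non-fullness argument: the paper observes in one line that any $f_*$ sends each $x_i$ to a single basis vector or to $0$, so any $\B$-linear map carrying some $x_i$ to a genuine sum cannot arise from a strong map; your explicit $U_{2,3}$ example is a concrete instance of precisely this phenomenon.
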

These categories expand the class of strong maps by allowing multi-valued functions on the ground sets, since basis vectors can be sent to sums of basis vectors.  While we have not yet explored these categories in depth, it is a reasonable hope that multi-valued strong maps have nice matroidal properties and that some important categorical limits can be added by taking the limits after these embeddings---but this is a topic for future work.  For now, we include a simple example of the compatibility between matroid theory and module theory exhibited by these categories:
\begin{theorem}
The functor $M \rightsquigarrow \left((\B^E)^\vee \twoheadrightarrow Q_M\right)$ sends direct sum decompositions (in the matroidal sense) to direct sum decompositions (in the categorical sense), and every categorical direct sum arises in this way.  In particular, $M$ is connected if and only if $(\B^E)^\vee \twoheadrightarrow Q_M$ is indecomposable.
\end{theorem}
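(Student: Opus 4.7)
The plan is to decode what direct sum decomposition means for objects of the quotient category. First I would observe that the domain $(\B^E)^\vee$ is a free $\B$-module on $E$, so any direct sum decomposition of the domain corresponds to a partition $E = E_1 \sqcup E_2$. Accordingly, a direct sum decomposition of the arrow $p\colon (\B^E)^\vee \twoheadrightarrow Q_M$ consists of such a partition together with a decomposition $Q_M = Q_1 \oplus Q_2$ in which $Q_i$ is the image of $(\B^{E_i})^\vee$ under $p$; dually, via Theorem~\ref{thm-intro:dual}, this is equivalent to an ambient direct sum $L_M = L_1 \oplus L_2$ as embedded submodules of $\B^E = \B^{E_1} \oplus \B^{E_2}$.

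Next, for the forward direction I would assume $M = M_1 \oplus M_2$ on $E = E_1 \sqcup E_2$. The cocircuits of $M$ are precisely the cocircuits of $M_1$ together with the cocircuits of $M_2$, viewed in $\B^E$ via the inclusions $\B^{E_i} \hookrightarrow \B^E$. Hence $L_M = L_{M_1} \oplus L_{M_2}$ as embedded submodules of $\B^E$, and dualizing using Theorem~\ref{thm-intro:dual} produces the desired categorical direct sum decomposition of the quotient. As a sanity check, one can equivalently observe that the flat lattice of $M$ is the product of the flat lattices of $M_1$ and $M_2$, giving $Q_M = Q_{M_1} \oplus Q_{M_2}$ directly.

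For the converse, suppose $p = p_1 \oplus p_2$ decomposes along $E = E_1 \sqcup E_2$. As noted above, this gives $L_M = L_1 \oplus L_2$ with $L_i \subseteq \B^{E_i}$, and by Theorem~\ref{thm-intro:minors} we have $L_i = \pi_{E_i}(L_M) = L_{M \backslash E_{3-i}}$, so $L_i$ is the tropical linear space of the matroid $M_i := M \backslash E_{3-i}$ on $E_i$. To upgrade the ambient splitting to a matroidal direct sum, I would use that cocircuits of $M$ correspond to the support-minimal nonzero elements of $L_M$: if the indicator $\chi_{C^*}$ of a cocircuit decomposes as $v_1 + v_2$ with both $v_i$ nonzero, then $v_1 \in L_M$ has strictly smaller support, contradicting minimality; thus every cocircuit of $M$ lies entirely in $E_1$ or entirely in $E_2$. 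This is the classical characterization of the matroidal direct sum $M = M_1 \oplus M_2$, and the final clause about connectedness versus indecomposability is the immediate specialization.

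The main obstacle I anticipate is this converse step, specifically the passage from an ambient direct sum $L_M = L_1 \oplus L_2$ of embedded tropical linear spaces to a matroidal decomposition of $M$, which hinges on the support-minimality characterization of cocircuits inside a constant-coefficient tropical linear space. A minor bookkeeping point is verifying that dualization interchanges direct sums of the forms appearing in this category, but for finite $\B$-modules binary products and coproducts coincide, so no trouble arises there.
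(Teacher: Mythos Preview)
Your argument is correct. For the forward direction, your ``sanity check'' via the flat lattice is exactly the paper's proof: it invokes Theorem~\ref{theorem: Q-is-flats} together with the fact that flats of $M_1\oplus M_2$ are unions of a flat of $M_1$ with a flat of $M_2$, yielding $Q_{M_1\oplus M_2}\cong Q_{M_1}\oplus Q_{M_2}$ compatibly with the quotients from $(\B^{E_1\sqcup E_2})^\vee$. Your primary cocircuit argument is simply the dual of this.

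For the converse you do genuinely more than the paper. The paper defines the category $\mathcal{C}$ so that its objects are \emph{by fiat} quotients of the form $\iota(M')$; hence any categorical decomposition $\iota(M)\cong A\oplus B$ already has $A=\iota(M_1)$ and $B=\iota(M_2)$ for some matroids, and the forward direction plus injectivity of $\iota$ (which the paper cites) immediately gives $M\cong M_1\oplus M_2$. Accordingly, the corollary on connectedness is stated without proof. Your route instead extracts the matroidal splitting intrinsically from the module splitting, via the characterization of cocircuits as the support-minimal nonzero elements of $L_M$. This is a sound argument and has the advantage that it would continue to work in a larger ambient category whose objects are not assumed to be matroidal quotients; the cost is that it is longer than needed for the statement as formulated in $\mathcal{C}$.
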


If $M$ and $M'$ are matroids of ranks $d$ and $d'$, respectively, on the ground set $E$, and if the matroid union $M\vee M'$ has rank $d+d'$, then the tropical linear space $L_{M\vee M'} \subseteq \B^E$ is the \emph{stable sum} of $L_M$ and $L_{M'}$, an operation dual to the stable intersection \cite{fink15}.  We give a geometric proof, based on Theorems \ref{thm-intro:minors} and \ref{thm-intro:existsL}, of the following basic monotonicity property which is a direct translation of a known property of matroid quotients:

\begin{theorem}
If $L_{M_1} \subseteq L_{M_2} \subseteq \B^E$ are tropical linear spaces and $M$ is a matroid on $E$ such that $\rk(M\vee M_2) = \rk(M)+\rk(M_2)$, then $L_{M\vee M_1} \subseteq L_{M \vee M_2}$.
\end{theorem}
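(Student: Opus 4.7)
The plan is to apply the factorization in Theorem \ref{thm-intro:existsL} to $L_{M_1}\subseteq L_{M_2}$, pass to a single extended matroid on an enlarged ground set, and then recover both sides of the desired inclusion via Theorem \ref{thm-intro:minors}. By Theorem \ref{thm-intro:existsL} there exists a tropical linear space $L''\subseteq \B^{E\sqcup F}$ with $|F|=\dim(L_{M_2})-\dim(L_{M_1})$ such that $L_{M_1}=L''\cap\B^E$ and $L_{M_2}=\pi_E(L'')$. Writing $L''=L_N$ for a matroid $N$ on $E\sqcup F$, Theorem \ref{thm-intro:minors} identifies $M_1=N/F$ and $M_2=N\setminus F$; in particular $F$ is independent in $N$, and $N$ admits a basis contained in $E$. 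Let $\tilde M$ denote the matroid on $E\sqcup F$ obtained from $M$ by adjoining the elements of $F$ as loops, so that $L_{\tilde M}$ is the image of $L_M$ under the coordinate embedding $\B^E\hookrightarrow \B^{E\sqcup F}$.

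The heart of the argument will be the two matroid-union identities
\[
(\tilde M\vee N)\setminus F = M\vee M_2, \qquad (\tilde M\vee N)/F = M\vee M_1.
\]
The first follows from the fact that deletion commutes with matroid union, together with $\tilde M\setminus F = M$ and $N\setminus F = M_2$. The second is more delicate. I would first verify the rank equalities $\rk(M\vee M_1)=\rk M+\rk M_1$ and $\rk(\tilde M\vee N)=\rk M+\rk N$ by lifting bases: a basis of $M_1$ lifts, via the independence of $F$ in $N$, to an independent set of $N$ disjoint from $F$, which extends to a basis of $M_2$, to which the hypothesis attaches a disjoint basis of $M$; a parallel argument using a basis of $N$ contained in $E$ handles $\tilde M\vee N$. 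I would then check that $B\subseteq E$ is a basis of $M\vee M_1$ if and only if $B\cup F$ is a basis of $\tilde M\vee N$: in any decomposition of an independent set of $\tilde M\vee N$ into a piece from $\tilde M$ and a piece from $N$, the loops $F$ must lie in the $N$-piece, and $N$-independence of a set containing $F$ is equivalent to $N/F$-independence of its restriction to $E$.

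Once both identities are established, Theorem \ref{thm-intro:minors} gives
\[
L_{M\vee M_1}=L_{\tilde M\vee N}\cap \B^E\;\subseteq\; \pi_E(L_{\tilde M\vee N})=L_{M\vee M_2},
\]
the inclusion holding because $\pi_E$ restricts to the identity on the coordinate subspace $\B^E$. The main obstacle will be the contraction identity $(\tilde M\vee N)/F = M\vee M_1$, which hinges on the rank equality $\rk(\tilde M\vee N)=\rk\tilde M+\rk N$; this is precisely where the hypothesis $\rk(M\vee M_2)=\rk M+\rk M_2$ enters essentially, and it is the linchpin that makes the monotonicity work.
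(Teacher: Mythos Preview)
Your approach is essentially the paper's: apply the factorization theorem to obtain a matroid $N$ on $E\sqcup F$, show that matroid union with (the loop-extension of) $M$ commutes with the relevant deletion and contraction by $F$, and read off the inclusion. The paper packages your two identities as Lemma~\ref{lemma: minor-stable-sum} and proves them via the exterior-algebra decomposition of Proposition~\ref{prop: one-minor-plucker}; your direct basis argument is a legitimate alternative. Your final step, using the trivial containment $L\cap\B^E\subseteq \pi_E(L)$, is in fact slightly more direct than the paper's, which verifies a dimension count in order to invoke Corollary~\ref{corollary: linear-space-inclusion} in the reverse direction.

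There is, however, a small gap in your rank-equality argument. You start from a basis $B_1$ of $M_1$, extend it to a basis $B_2$ of $M_2$, and then assert that ``the hypothesis attaches a disjoint basis of $M$'' to $B_2$. But the hypothesis $\rk(M\vee M_2)=\rk(M)+\rk(M_2)$ only gives the existence of \emph{some} disjoint pair $(B_M,B_{M_2}')$; it does not say that the particular $B_2$ you constructed has a disjoint partner in $M$, and in simple examples this fails. The fix is to reverse the lifting direction, as the paper does: start from a disjoint pair $(B_M,B_{M_2}')$ furnished by the hypothesis, and note that $B_{M_2}'$ contains a basis of $M_1$ (since $L_{M_1}\subseteq L_{M_2}$ forces every cocircuit of $M_1$ to be a union of cocircuits of $M_2$, so any basis of $M_2$, being a minimal set meeting every cocircuit of $M_2$, meets every cocircuit of $M_1$). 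The same $B_{M_2}'$ is independent in $N\setminus F$ of size $\rk M_2=\rk N$, hence a basis of $N$, which settles $\rk(\tilde M\vee N)=\rk M+\rk N$ simultaneously.
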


By matroidal/tropical duality, an analogous statement holds for the stable intersection.  For valuated matroids supported on the uniform matroid, this monotonicity property is an immediate consequence of Speyer's analytic interpretation of the stable intersection as a limit of perturbations \cite{speyer08}; Speyer's framework can be extended to the generality of our result, but our module-theoretic argument nonetheless provides a new perspective here.

Transversal matroids are those that factor into a union of rank-one matroids.  Geometrically, this means their tropical linear spaces are stable sums of lines \cite{fink15}.  Factoring a rank $d$ transversal matroid $M$ on $E$ into a union of rank-one matroids is equivalent to writing a $d\times |E|$ matrix over $\B$ such that the subsets of columns yielding nonzero maximal minors are precisely the bases of $M$.  Let us call such a matrix a \emph{$\B$-presentation} of $M$.  Combinatorial results in \cite{bondy-welsh,bondy72} translate immediately into the following geometric statement:

\begin{theorem}
Each nonempty fiber of the constant-coefficient tropical Stiefel map \cite{fink15} \[\B^{d\times |E|} \dashrightarrow \Gr^{trop}(d,|E|)\]  sending a $\B$-matrix to its tropical Pl\"ucker vector has a unique maximal member and in general multiple distinct minimal members.
\end{theorem}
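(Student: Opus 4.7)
The plan is to translate the Stiefel map into transversal-matroid language and then invoke the classical theorems of Bondy and Welsh on presentations of transversal matroids.

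First I would identify a point of $\B^{d \times |E|}$ with an ordered $d$-tuple $(A_1, \ldots, A_d)$ of subsets of $E$, where $A_i$ is the support of the $i$th row. Over $\B$, the maximal minor on a $d$-subset $B \subseteq E$ is the Boolean sum over permutations of products, which equals $1$ precisely when the bipartite graph on $[d] \sqcup B$ with edges $\{(i,e) : e \in A_i \cap B\}$ admits a perfect matching. Hence the image of a matrix under the Stiefel map is the indicator vector of the bases of the transversal matroid $M[A_1, \ldots, A_d]$; a fiber is nonempty if and only if its target is the tropical Pl\"ucker vector of some transversal matroid $M$, in which case the fiber is precisely the set of $\B$-presentations of $M$.

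Next I would transport the coordinatewise order on $\B^{d \times |E|}$ to the presentation side, where it becomes coordinatewise containment of $d$-tuples of subsets. Since the Stiefel map is invariant under row permutations, the fiber is naturally viewed modulo the $S_d$ row action, and the ``unique maximal member'' of the theorem is to be interpreted in this sense (equivalently, as a $d$-element multiset of subsets). The uniqueness of a maximal member is then precisely the Bondy-Welsh maximal presentation theorem \cite{bondy-welsh}: every rank-$d$ transversal matroid $M$ admits a unique $d$-multiset $\{A_1^*, \ldots, A_d^*\}$ presenting it such that every other presentation is contained in it after reindexing. For the non-uniqueness of minimal members I would appeal to \cite{bondy72}, which explicitly produces transversal matroids with multiple incomparable minimal presentations, and include one small example---say, a rank-$2$ transversal matroid on four elements whose fiber contains two incomparable minimal $\B$-matrices not related by a row permutation.

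The only delicate point---more bookkeeping than mathematical obstacle---is reconciling ordered matrices with unordered presentations: $\B^{d \times |E|}$ parameterizes ordered matrices while the Stiefel map factors through the quotient by $S_d$, so any unique-maximum statement is necessarily a statement modulo this action, and one must invoke Bondy-Welsh at the correct level of symmetry.
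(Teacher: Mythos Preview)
Your proposal is correct and follows essentially the same route as the paper: the paper's entire proof is the single sentence ``This is an immediate translation of \cite[Theorem 1]{bondy72},'' having already set up the dictionary between $\B$-matrices and presentations of transversal matroids in the paragraphs preceding the theorem. Your version spells out this translation more carefully and, in particular, flags the $S_d$ row-permutation ambiguity that the paper silently passes over; this is a fair point, since without quotienting by row permutations the fiber in $\B^{d\times |E|}$ generically has several maximal matrices (a full $S_d$-orbit), and the intended uniqueness is at the level of multisets of rows.
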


The maximal elements of these fibers can, in a sense, be viewed as a tropical analogue of reduced row echelon form, and the process of transforming an arbitrary $\B$-presentation into the maximal one a tropical analogue of Gauss-Jordan elimination---although reduced echelon matrices tropicalize to minimal, rather than maximal, elements of these fibers, so perhaps ``augmented'' form is a better term than ``reduced'' form (and one is reminded here of the moduli space of curves  where the maximal and minimal strata are reversed when passing from classical to tropical moduli \cite{ACP}).

A special, and important \cite{bixby77,recski-iri,bonin10}, class of transversal matroids are the fundamental transversal matroids.  Translating a known characterization of these to our module-theoretic setting reads as follows:

\begin{theorem}
A rank $d$ transversal matroid on $E$ is fundamental transversal if and only if it admits a $\B$-presentation by a matrix inducing a surjective linear map $\B^{|E|} \twoheadrightarrow \B^d$.
\end{theorem}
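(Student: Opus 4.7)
The plan is to translate the algebraic condition of surjectivity into a concrete structural condition on the columns of the $\B$-matrix, and then recognize this condition as the classical definition of a fundamental transversal matroid. The key bridging observation is that, because $\B$ is idempotent, the image of a $\B$-linear map $A : \B^{|E|} \to \B^d$ is the set of all suprema (bitwise ORs) of subsets of its columns. Consequently, $A$ is surjective if and only if every standard basis vector $e_i \in \B^d$ lies in the image. I would first verify that for $e_i$ to be expressible as a join of columns, at least one column must already equal $e_i$: any column appearing in such a join must be coordinatewise $\leq e_i$, hence either zero or $e_i$ itself, and at least one must be nonzero. Thus surjectivity of $A$ is equivalent to each $e_i$ appearing verbatim as a column of the matrix.

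Next, I would interpret this column condition in terms of the set system $(A_1,\ldots,A_d)$ corresponding to the rows of the matrix. Having $e_i$ as the column indexed by some $b_i \in E$ means precisely that $b_i \in A_i$ and $b_j \notin A_i$ for all $j \neq i$. Collecting these $b_i$ yields a set $B = \{b_1,\ldots,b_d\}$ whose associated $d \times d$ submatrix is the identity, which is a nonzero maximal minor in $\B$; since the matrix is a $\B$-presentation, $B$ is a basis of $M$. This gives a basis of $M$ and a presentation with respect to which $B$ is a ``distinguished'' basis in the sense that each $b_i$ belongs to exactly one set of the presentation—which is the defining property of a fundamental transversal matroid.

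For the converse, starting from a fundamental transversal $M$ of rank $d$ with a presentation $(A_1,\ldots,A_d)$ and basis $B = \{b_1,\ldots,b_d\}$ witnessing the fundamental property, one builds the $d\times |E|$ matrix whose $i$-th row is the indicator of $A_i$. By transversality this is a $\B$-presentation of $M$, and by the fundamental condition its column at $b_i$ is $e_i$; thus each $e_i$ is a column, so the induced linear map is surjective by the observation above.

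The main obstacle I anticipate is not any single step but rather making sure the translation between the two classical notions of a transversal presentation—as a set system versus as a $\B$-matrix—is stated cleanly enough that the argument reads as a straightforward bookkeeping check. In particular, care is needed so that ``fundamental transversal'' is taken in the form that guarantees the existence of a presentation and basis jointly satisfying the $b_i \in A_i$, $b_j \notin A_i$ condition (rather than any weaker or stronger variant in the literature); once that is pinned down, the argument is essentially a one-line observation about columns equalling standard basis vectors.
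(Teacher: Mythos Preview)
Your approach is correct and coincides with the paper's: both reduce the statement to the known characterization that a rank $d$ matroid is fundamental transversal iff it admits a $\B$-presentation containing a $d\times d$ permutation submatrix, together with the idempotency observation that a $\B$-matrix is surjective exactly when each standard basis vector $e_i$ occurs as a column. (One small slip: ``$b_i \in A_i$ and $b_j \notin A_i$'' should read ``$b_i \in A_i$ and $b_i \notin A_j$ for $j\neq i$''.)
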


Here is one perspective of this observation.  Over a field $k$, every rank $d$ linear subspace of $k^n$ corresponds to the maximal minors of a surjective (i.e., full rank) $d\times n$ matrix; over $\B$, the rank $d$ tropical linear spaces in $\B^n$ which correspond to the maximal minors of a $d\times n$ matrix are the transversal matroids \cite{fink15,giansiracusa17} and the ones corresponding to a surjective matrix are the fundamental transversal matroids.

\ack{We thank Alex Fink, Jeff Giansiracusa, Andrew Macpherson, Felipe Rinc\'on, and Bernd Sturmfels for helpful conversations.  The second author was supported in part by NSA Young Investigator Grant H98230-16-1-0015 and NSF DMS-1802263.}

\section{Preliminaries}

In this section we recall some background material that will be relied upon throughout the paper.

\subsection{Matroids}

What follows is basic terminology and elementary facts about matroids that can be found in any of the standard texts on matroids, e.g. \cite{oxley11}; all concepts and results from matroid theory beyond this will be discussed as needed throughout the paper.

A matroid can be defined in many distinct, ``cryptomorphic'' ways.  Most relevant for us is the definition in terms of bases: a \emph{matroid} on the \emph{ground set} $E = \{1,2,\ldots,n\}$ is a collection of subsets $\mathcal{B} \subseteq 2^E$, called the \emph{bases}, such that (B1) $\mathcal{B} \ne \emptyset$ and (B2) if $B_1, B_2 \in \mathcal{B}$ and $i\in B_2 - B_1$ then there exists $j\in B_1 - B_2$ such that $(B_2-i)\cup j \in \mathcal{B}$.  Axiom (B2) is called the \emph{basis exchange axiom}; it can be replaced by the following \emph{strong basis exchange axiom} without changing the class of matroids  thus defined: (B2$'$) if $B_1, B_2 \in \mathcal{B}$ and $i\in B_2 - B_1$ then there exists $j\in B_1 - B_2$ such that $(B_2-i)\cup j \in \mathcal{B}$ and $(B_1-j)\cup i \in \mathcal{B}$.  All bases of a matroid have the same cardinality, and this is the \emph{rank} of the matroid.  

The subsets of bases are called \emph{independent sets}, subsets that are not independent are \emph{dependent}, the minimal dependent sets are the \emph{circuits}, and unions of circuits are \emph{cycles}.  If $B$ is a basis and $e\in E-B$, then $B\cup e$ contains a unique circuit, denoted $C(e,B)$ and called the \emph{fundamental circuit of $e$ with respect to $B$}; we have $e\in C(e,B)$, and every circuit of $M$ is a fundamental circuit for some $B$ and $e$ \cite[Corollary 1.2.6]{oxley11}.  The subsets of $E$ given by the complements of the bases of a rank $d$ matroid $M$ form the collection of bases of the \emph{dual matroid} $M^*$, which has rank $n-d$.  The prefix ``co'' in matroid theory refers to the dual matroid, so for instance the \emph{cocircuits} of $M$ are the circuits of $M^*$.  

If $T\subseteq E$ is any subset of the ground set and $M$ is a matroid on $E$, then the \emph{restriction of $M$ to $T$}, denoted $M|T$, is the matroid whose independent sets are the independent sets of $M$ which are contained in $T$.  The \emph{rank} of $T$ in $M$ is by definition the rank of the matroid $M|T$.  The \emph{closure} of $T \subseteq E$ is the set $\cl(T) := \{i\in E : \rk(T\cup i) = \rk(T)\}$, where $\rk : 2^E \rightarrow \mathbb{N}$ denotes the rank function.  A \emph{flat} is a subset that equals its own closure, i.e., it is maximal for its rank.  On multiple instances we use the elementary observation that complements of flats are precisely the unions of cocircuits (see., e.g., the proof of \cite[Proposition 5.3.19]{maclagan-sturmfels}).  The set of flats of $M$, partially ordered by inclusion, forms a lattice denoted $\mathcal{L}(M)$ and called the \emph{lattice of flats}.

A 1-element circuit is a \emph{loop} and a 2-element circuit is a \emph{parallel edge}.  A matroid is \emph{simple} if it has no loops or parallel edges, and every matroid admits a unique \emph{simplification}.  The lattice of flats of a matroid determines the simplification of the matroid, and vice versa.

\begin{conv}
Throughout the paper $M$ is a rank $d$ matroid on the ground set $E = \{1,2,\ldots,n\}$.
\end{conv}

\subsection{Idempotent semirings and modules}

A \emph{semiring} is a set with two binary operations that satisfy all the axioms of a ring except for the existence of additive inverses \cite{golan99}.   A \emph{homomorphism} of semirings is a set map compatible with the two operations, and a \emph{module} $N$ over a semiring $S$ is an abelian monoid $N$ together with a semiring homomorphism $S \rightarrow \End(N)$.  Homomorphisms of $S$-modules, isomorphisms, and $S$-submodules are defined in the obvious way.  We denote the \emph{dual} of an $S$-module $N$ by $N^\vee = \Hom(N,S)$.  

Quotients of semirings and their modules are a bit different than in the setting of rings, since the lack of additive inverses means one cannot identify a pair of elements, say $f\sim g$, by identifying their difference with zero, $f-g\sim 0$. A \emph{congruence} on an $S$-module $N$ is an equivalence relation that is also a submodule of $N\times N$.  This means we are specifying which pairs of elements of $N$ to identify in a quotient, and doing so in a way that ensures the $S$-module structure descends to the quotient.  Given any subset $A\subseteq N\times N$ there is a unique smallest congruence containing $A$, and this is the congruence \emph{generated} by $A$.  A \emph{semiring congruence} on $S$ is an equivalence relation on $S\times S$ that is also a subsemiring, and an $S$-algebra congruence is defined similarly.  The reader may consult, e.g., \cite[\S2.4]{GG1} for more on these topics.  

A semiring $S$ is \emph{idempotent} if $1+1=1$ (or equivalently, $s+s=s$ for all $s\in S$).  There is a unique 2-element idempotent semiring, the \emph{Boolean semiring} $\B = \{0,1\}$.  Explicitly, in $\B$ we have \[0+0=0,~0+1=1,~1+1=1,\]
\[0\cdot 0 = 0,~0\cdot 1 = 0,~1\cdot 1 = 1.\]
The Boolean semiring $\B$ is the initial object in the category of idempotent semirings, so every idempotent semiring is canonically a $\B$-algebra.  The following observation will be important when we study the lattice of flats of a matroid from an algebraic perspective:
\begin{remark}
Every finitely generated $\B$-module is a finite lattice, and conversely every finite lattice is a finitely generated $\B$-module \cite[Proposition 3.1.1 and Corollary 3.1.3]{frenk13}.  A homomorphism of $\B$-modules is an isomorphism if and only if it is a lattice isomorphism.
\end{remark}
Another important idempotent semiring for us is the \emph{tropical numbers} $\T = \R\cup\{-\infty\}$ equipped with addition given by the maximum and multiplication given by the ordinary addition.  To view the Booleans as a subsemiring in $\T$ we must write $\B=\{-\infty,0\}$; to avoid the potential confusion resulting from this we prefer to view $\T$ as an abstract idempotent semiring and write its neutral elements as $0$ and $1$.  

For a finite set $E = \{1,2,\ldots,n\}$ we denote by $\B^E$ the free module of rank $n$ with standard basis $\{e_i\}_{i=1}^n$.  (In fact, up to permutation this basis is unique \cite[Proposition 2.2.2]{giansiracusa17}.)  We denote the dual basis for $(\B^E)^\vee$ by $\{x_i\}_{i=1}^n$.  

\begin{definition}
	The \textit{bend relations} of a linear form $f = \sum_i f_i x_i \in \Bdual{E}$ are the relations
	\begin{equation*}
		\left\{ f \sim \sum_{i \neq j} f_i x_i : j \in E\right\}.
	\end{equation*}
	We write $\bend{f} \subseteq \Bdual{E}\times \Bdual{E}$ for the congruence generated by the bend relations of $f$.
\end{definition}

Bend relations were originally introduced in \cite{GG1} to define a scheme structure on tropicalization.  In the simplest case,  for a tropical polynomial $f\in \T[x_1,\ldots,x_n]$, the $\T$-points of the quotient by (the $\T$-algebra congruence generated by) the bend relations of $f$ form the tropical hypersurface $V^{trop}(f) \subseteq \T^n$, and this quotient itself is the coordinate algebra of this tropical hypersurface.  In the present paper we shall only need the linear variant of this construction formalized in \cite{giansiracusa17} and stated below.  First, more notation: for a collection of linear forms $A \subseteq (\B^E)^\dual$ we write $\bend{A}$ for the congruence generated by the bend relations of all $f \in A$; if $\varphi: \B^E \to N$ is a $\B$-module homomorphism, then we have the dual transformation $\varphi^\dual: N^\dual \to \left(\B^E\right)^\vee$ and let $\bend{\varphi} := \bend{\im\varphi^\vee}$.
\begin{definition}
	The \textit{tropical kernel} of a $\B$-module homomorphism $\varphi: \B^E \to N$ is 
	\[\tropker(\varphi) := \left\{\sum_i c_i e_i \in \B^E : \varphi\left( \sum_i c_i e_i \right) = \varphi\left(\sum_{i \neq j} c_i e_i \right), \forall j \in E\right\} = \left(\Bdual{E} / \bend{\varphi}\right)^\vee \subseteq \B^E.\]  
\end{definition}

For a linear form $f : \B^E \rightarrow \B$, the $\T$-submodule $\tropker(f)\otimes_\B \T \subseteq \T^E$ is the tropical hyperplane defined by the piecewise linear function $f\otimes_\B\T : \T^E \rightarrow \T$.  For this reason, we shall call $\tropker(f)$ the \emph{tropical hyperplane} associated to $f\in (\B^E)^\vee$.  More generally, for $\varphi : \B^E \rightarrow N$ the $\T$-submodule $\tropker(\varphi)\otimes_\B\T \subseteq \T^E$ is an intersection of tropical hyperplanes, but it has been known since the early days of tropical geometry that only certain such intersections deserve to be called tropical linear spaces, as we next discuss.

\subsection{Tropical linear spaces}

By definition, a rank $d$ \emph{tropical Pl\"ucker vector} is a nonzero vector \[v=\sum_{I\in\binom{E}{d}}v_I e_I \in \T^{\binom{E}{d}},\] for $E = \{1,2,\ldots,n\}$, satisfying the \emph{tropical Pl\"ucker relations}, which from the perspective of bend relations means that for each $A\in \binom{E}{d+1}$ and $B\in \binom{E}{d-1}$ we have
\[\sum_{i \in A-B}v_{A-i}v_{B\cup i} = \sum_{i \in (A-B)- j}v_{A-i}v_{B\cup i}\] for each $j\in A-B$.  A tropical Pl\"ucker vector determines a submodule \[L_v := \bigcap_{A\in \binom{E}{d+1}}\tropker\left(\sum_{i\in A}v_{A-i}x_i\right) \subseteq \T^E\] called a \emph{tropical linear space}, and this submodule determines the tropical Pl\"ucker vector up to an additive constant in $\R$ \cite{speyer08,maclagan-sturmfels}.  

While a matroidal perspective on tropical Pl\"ucker vectors and tropical linear spaces is well-known \cite{speyer08,maclagan-sturmfels,frenk13}, we will not recall it here and instead go through it systematically in the body of this paper, filling in various details and significantly expanding the connection as we go.  For those who have not yet encountered this material, the basic point is that a tropical Pl\"ucker vector is exactly a valuated matroid, and the linear forms in the intersection defining $L_v$ correspond to the valuated circuits of this valuated matroid.  These assertions will be clarified later.  

\begin{remark}
We shall not need the following facts, but they add a nice geometric context: tropical linear spaces are precisely the degree one tropical varieties in $\T^n$ \cite[Theorem 7.4]{fink-chow}, and they are also precisely the tropical subvarieties of $\T^n$ that are simultaneously $\T$-submodules \cite{hampe}.  
\end{remark}

The punchline is that linear tropical geometry is valuated matroid theory.  We focus in this paper primarily on \emph{constant-coefficient tropical linear spaces}, meaning submodules $L_v\subseteq \B^E$ where the tropical Pl\"ucker vector $v$ lies in $\B^{\binom{E}{d}} \subseteq \T^{\binom{E}{d}}$, since matroids have been studied much more extensively than valuated matroids.

\subsection{Idempotent exterior algebra}\label{sec:idextalg}

A useful framework for tropical Pl\"ucker vectors and tropical linear spaces, which we draw from in this paper, was introduced in \cite{giansiracusa17}; we quickly recall the basics here, focusing on the constant-coefficient case.  

The symmetric algebra of the free module $\B^E$ is defined as a quotient of the tensor algebra in the usual way.  The \emph{tropical Grassmann algebra}, or \emph{exterior algebra}, is the $\B$-algebra $\bigwedge \B^E$ defined as the quotient of $\Sym\,\B^E$ by the $\B$-algebra congruence generated by $e_i^2\sim 0$.  The $d$-th graded piece, denoted $\bigwedge^d\B^E$, is free of rank $\binom{|E|}{d}$ with basis $\{e_I\}_{I\in \binom{E}{d}}$, where $e_I := e_{i_1}\wedge \cdots \wedge e_{i_d}$ when $I=\{i_1,\ldots,i_d\}$.  We use this basis to view the tropical Pl\"ucker relations as defining the locus of tropical Pl\"ucker vectors inside $\bigwedge^d\B^E \cong \B^{\binom{E}{d}}$.

Given a rank $d$ matroid $M$ with bases $\mathcal{B} \subseteq \binom{E}{d}$, we define the \emph{basis indicator vector} as  
\[[M] := \sum_{B\in \mathcal{B}}e_B \in \bigwedge^d\B^E,\] and $[M]_I$ denotes the $I$-th component of this multivector, which is $1$ if $I$ is a basis and 0 otherwise.  (If a matroid $N$ has rank zero, then $[N]=1$ in $\bigwedge \B^E$.)  The tropical Pl\"ucker vectors are then precisely the multivectors $v \in \bigwedge^d\B^E$ of the form $v=[M]$ for a matroid $M$, since the tropical Pl\"ucker relations are equivalent to the strong basis exchange axiom (B2$'$):
given bases $B_1$ and $B_2$ and $i \in B_2 - B_1$, we have $v_{(B_1\cup i)-i}v_{(B_2 - i)\cup i} = 1$, so the tropical Pl\"ucker relation for $A := B_1 \cup i$ and $B := B_2 - i$ provides a $j \in B_1 - B_2$ such that $v_{(B_1\cup i)-j}v_{(B_2-i)\cup j} = 1$ (the converse argument is similar).  We usually denote the tropical linear space associated to $[M]$ by $L_M$ rather than $L_{[M]}$.

Wedge-multiplication yields a $\B$-module homomorphism \[(-\wedge [M]) : \B^E \rightarrow \bigwedge^{d+1}\B^E,\] and
for each $A\in\binom{E}{d+1}$ the $A$-th component of this map is the linear form $\sum_{i\in A}[M]_{A-i}x_i$.  Since the intersection of the tropical hyperplanes defined by these linear forms yields the tropical linear space $L_M$, we have
\[\tropker(-\wedge [M]) = L_M \subseteq \B^E.\]
In \cite{giansiracusa17} the emphasis is moved from submodules to quotient modules, essentially since exterior powers are right exact functors; accordingly, the following quotient module is introduced:
\[Q_M := \Bdual{E}/\bend{-\wedge [M]}.\] 
Since the components of the linear map $(-\wedge [M])$ are the linear forms $\sum_{i\in A}[M]_{A-i}x_i$, the module $Q_M$ can be described concretely as the quotient by the bend relations of these linear forms.  By the definition of the tropical kernel, we have 
\begin{equation}\label{eq:Qdual}
Q_M^\vee = \tropker(-\wedge [M]) = L_M.
\end{equation}
A main result of \cite{giansiracusa17} is that the quotient map $\Bdual{E} \twoheadrightarrow Q_M$ induces a  rank one quotient \[\bigwedge^d\Bdual{E} \twoheadrightarrow \bigwedge^d Q_M \cong \B\] whose module-theoretic dual is the basis indicator vector $[M] \in \bigwedge^d\B^E$.  In general, the exterior algebra of a quotient $\B^E \twoheadrightarrow Q$ is defined as \[\bigwedge Q := \bigwedge\B^E\otimes_{\Sym\,\B^E}\Sym\,Q,\] which means we kill the squares of the images of the basis vectors.  

\begin{lemma}[{\cite[Lemma 3.2.2]{giansiracusa17}}] \label{lemma: wedge-power-relations}
	If $\B^E \twoheadrightarrow Q$ is generated by the relations $\{a_i \sim b_i\}$, then the quotient $\bigwedge^d \B^E \surj \bigwedge^d Q$ is generated by the relations $a_i \wedge e_I \sim b_i \wedge e_I$ for all $I \in \binom{E}{d-1}$.
\end{lemma}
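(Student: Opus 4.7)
The plan is to unpack the definition $\bigwedge Q := \bigwedge\B^E \otimes_{\Sym\,\B^E}\Sym\,Q$. The quotient $\Sym\,\B^E \twoheadrightarrow \Sym\,Q$ is cut out by the $\B$-algebra congruence generated in degree $1$ by $\{a_i \sim b_i\}$, since the symmetric algebra is freely generated by its degree-$1$ piece. Tensoring with $\bigwedge\B^E$ over $\Sym\,\B^E$ descends these same relations to an algebra congruence $\mathcal{K}$ on $\bigwedge\B^E$; the exterior squaring relations $e_i \wedge e_i \sim 0$ are already built into $\bigwedge\B^E$, so no extra relations are introduced. Thus $\bigwedge Q$ is the quotient of $\bigwedge\B^E$ by the algebra congruence $\mathcal{K}$ generated by $\{a_i \sim b_i\}$ in degree $1$.

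The key step is to describe $\mathcal{K}$ in each degree as a module congruence. For a commutative $\B$-algebra $A$, the algebra congruence generated by relations $\{f_j \sim g_j\}$ coincides with the module congruence generated by the multiplicative closure $\{hf_j \sim hg_j : h \in A,\ j\}$: this collection is already closed under $\B$-linear combinations and under further multiplication by elements of $A$, hence is itself an algebra congruence, while conversely any algebra congruence containing the $f_j \sim g_j$ must contain each $hf_j \sim hg_j$. Applying this to $A = \bigwedge\B^E$ with the homogeneous degree-$1$ relations $a_i \sim b_i$, and restricting to the degree-$d$ graded piece, the module congruence defining $\bigwedge^d Q$ as a quotient of $\bigwedge^d\B^E$ is generated by $\{w \wedge a_i \sim w \wedge b_i : w \in \bigwedge^{d-1}\B^E,\ i \in E\}$.

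To reduce from arbitrary $w$ to basis elements, I would use that $\bigwedge^{d-1}\B^E$ is free over $\B$ with basis $\{e_I\}_{I\in\binom{E}{d-1}}$: writing $w = \sum_I c_I e_I$ with $c_I \in \B$, bilinearity of $\wedge$ gives $w \wedge a_i = \sum_I c_I (e_I \wedge a_i)$ and similarly for $b_i$, so the relations $a_i \wedge e_I \sim b_i \wedge e_I$ imply $w \wedge a_i \sim w \wedge b_i$ by closure of module congruences under $\B$-linear combinations. The main obstacle is the second step---carefully verifying that an algebra congruence is generated as a module congruence by the multiplicative closure of its defining relations; once this general principle about congruences on $\B$-algebras is established, the rest is a routine bookkeeping of the grading on $\bigwedge\B^E$.
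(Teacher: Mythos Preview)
The paper does not prove this lemma; it is quoted from \cite{giansiracusa17} and used as a black box. So there is no ``paper's own proof'' to compare against here.

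That said, your argument is sound and is essentially how one would prove it. The one place you correctly flag as needing care is the claim that, for a commutative $\B$-algebra $A$, the algebra congruence generated by $\{f_j\sim g_j\}$ equals the module congruence $C$ generated by $\{hf_j\sim hg_j : h\in A\}$. Your parenthetical justification (``this collection is already closed under $\B$-linear combinations and under further multiplication by $A$'') is a bit compressed: closure of $C$ under multiplication by $A$ is not automatic from the generating set alone and needs the small argument that $\{(x,y)\in C:(ax,ay)\in C\text{ for all }a\}$ is itself a module congruence containing the generators, hence equals $C$. Once that is in hand, the algebra-congruence property follows from transitivity via $(xu,yu)\in C$ and $(yu,yv)\in C$. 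Since you already identify this as the main obstacle, you are on the right track; just make that step explicit. The reduction from general $w\in\bigwedge^{d-1}\B^E$ to the basis $e_I$ is, as you say, immediate from freeness.
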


\section{The algebraic and matroidal structure of $L_M$ and $Q_M$}\label{sec:QL}

The main goal of this section is to prove Theorem \ref{thm-intro:dual}, that for any matroid $M$ the quotient module $Q_M$ recalled in \S\ref{sec:idextalg} is isomorphic to the lattice of flats $\mathcal{L}(M)$ and dual to the associated tropical linear space, namely $Q_M = L_M^\vee$ (the equality $L_M = Q_M^\vee$, noted in Equation \eqref{eq:Qdual}, was already observed in \cite[Remark 4.3.3]{giansiracusa17}).  Along the way, we include proofs of a few facts about these modules previously established in the literature; by focusing on the constant-coefficient setting we are able to produce proofs that more closely integrate the module theory with matroid theory.  

\subsection{Matroidal interpretations of the tropical modules}

Recall that the tropical linear space $L_M \subseteq \B^E$ associated to the matroid $M$ is the intersection of tropical hyperplanes (i.e., tropical kernels) defined by the linear forms $\sum_{i\in A}[M]_{A-i}x_i \in \Bdual{E}$ for all $A\in\binom{E}{d+1}$, and the quotient module $Q_M$ is the quotient of the dual space $\Bdual{E}$ by the bend relations of these linear forms.  These linear forms are well-known to be the valuated circuits of $M$ when $M$ is a valuated matroid; we include here a quick proof of this fact in the present constant-coefficient setting:

\begin{lemma} \label{lemma: circuits}
	If $A\in \binom{E}{d+1}$ and $f_A := \sum_{i \in A} [M]_{A-i} x_i$ is nonzero, then $f_A = \sum_{i\in C}x_i$ for some circuit $C$ of $M$, and conversely for any circuit $C$ of $M$ we have $\sum_{i\in C}x_i = f_A$ for some $A\in \binom{E}{d+1}$.
\end{lemma}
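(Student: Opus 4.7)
The key observation is that the support of $f_A$ is precisely $S := \{i \in A : A - i \in \mathcal{B}\}$, since $[M]_{A-i} = 1$ if $A-i$ is a basis and $0$ otherwise. So both directions of the lemma amount to identifying this set $S$ with a circuit of $M$, and conversely showing every circuit arises this way.

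For the forward direction, I would assume $f_A \neq 0$, so $S$ contains some element $i_0$, meaning $A - i_0$ is a basis. Then $|A| = d+1$ and $\rk(A) = d$, so $A$ is dependent with nullity one. The plan is to invoke the standard matroid fact that such a set contains a unique circuit $C$: any maximal independent subset $B \subseteq A$ has size $d$, $A - B = \{e\}$ is a single element, and any circuit in $A$ must use $e$ (else it lies in $B$) and be contained in $B \cup e$, forcing it to be the fundamental circuit $C(e, B)$. I would then show $S = C$ by the chain of equivalences
\[
i \in S \iff A - i \in \mathcal{B} \iff A - i \text{ contains no circuit} \iff C \not\subseteq A - i \iff i \in C,
\]
where the penultimate step uses that $C$ is the \emph{only} circuit contained in $A$.

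For the converse, given a circuit $C$ of $M$, choose any $e \in C$; then $C - e$ is independent, so I can extend it to a basis $B$ of $M$. Since $B$ is independent it cannot contain $C$, hence $e \notin B$, and setting $A := B \cup \{e\}$ gives $|A| = d+1$ with $C \subseteq A$ and $\rk(A) = d$. By the uniqueness result used above, $C$ is the unique circuit of $A$, and the forward direction then yields $S = C$, i.e., $f_A = \sum_{i \in C} x_i$ as required.

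The only real content is the uniqueness of the circuit contained in a set with $|A| = \rk(A) + 1$; everything else is bookkeeping. This is a classical consequence of the fundamental circuit construction, so I expect no serious obstacle, though I would want to state the uniqueness argument cleanly (perhaps as an internal subclaim) since it is used in both directions of the proof.
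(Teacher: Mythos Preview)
Your proposal is correct and follows essentially the same approach as the paper: both arguments hinge on writing $A = B \cup e$ for a basis $B$, invoking the uniqueness of the fundamental circuit $C(e,B)$ inside $A$, and observing that $A-i$ is a basis precisely when $i \in C(e,B)$. Your converse, extending $C-e$ to a basis, is exactly the standard proof that every circuit is a fundamental circuit, which the paper simply cites; otherwise the two arguments are the same, with yours spelled out in slightly more detail.
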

\begin{proof}
	Suppose $f_A \ne 0$ for some $A \in \binom{E}{d+1}$.  Then $A = B \cup e$ for some basis $B$ and $e\in A-B$, so there is a unique circuit in $A$, namely the fundamental circuit $C(e,B)$.  This means $A-i$ is a basis if and only if $i\in C(e,B)$, so $f_A = \sum_{i \in C(e,B)} x_i$.  Conversely, any circuit is a fundamental circuit, say $C=C(e,B)$; then $\sum_{i\in C}x_i = f_A$ for $A={B\cup e}$.
\end{proof}

In particular, this means the congruence defining $Q_M$ is generated by the relations \[\sum_{i \in C-j} x_i \sim \sum_{i \in C} x_i\] for each circuit $C$ and $j \in C$.  This enables us to understand equality in the lattice $Q_M$, and thereby identify it with the lattice of flats $\mathcal{L}(M)$, using the following formulation of the closure function:
\begin{lemma}[{\cite[Proposition 1.4.11(ii)]{oxley11}}]
	For any subset $S\subseteq E$, the closure $\cl(S)$ is the union of $S$ with all elements $e \in E$
	such that there exists a circuit $C$ with $e \in C \subseteq S \cup e$.
\end{lemma}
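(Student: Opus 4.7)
The plan is to verify both inclusions of the asserted equality directly from the definitions, exploiting only elementary facts about rank and circuits that are recalled in the preliminaries. Throughout I will use that $\cl(S) = \{e \in E : \rk(S \cup e) = \rk(S)\}$, that circuits are the minimally dependent subsets, and that an independent subset $I \subseteq T$ with $|I| = \rk(T)$ is a basis of the restriction $M|T$.

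For the inclusion $\cl(S) \subseteq S \cup \{e : \exists\,\text{circuit } C \text{ with } e \in C \subseteq S \cup e\}$, I would take $e \in \cl(S) \setminus S$ and choose a basis $B$ of $M|S$, so $B \subseteq S$ is independent with $|B| = \rk(S)$. Because $\rk(S \cup e) = \rk(S) = |B|$ and $B \subseteq S \cup e$, the set $B$ is also a basis of $M|(S \cup e)$, hence $B \cup e$ is dependent and therefore contains a circuit $C$. This $C$ must contain $e$, since otherwise $C \subseteq B$ would contradict the independence of $B$. Thus $e \in C \subseteq B \cup e \subseteq S \cup e$, as required. (In fact $C$ is the fundamental circuit $C(e,B)$, but uniqueness is not needed for the argument.)

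For the reverse inclusion, I would suppose there is a circuit $C$ with $e \in C \subseteq S \cup e$. If $e \in S$ the conclusion is immediate, so assume $e \notin S$, whence $C - e \subseteq S$. Since $C$ is minimally dependent, $C - e$ is independent while $C$ itself is dependent, forcing $\rk(C - e) = |C| - 1 = \rk(C)$. Therefore $e \in \cl(C - e)$, and monotonicity of closure together with $C - e \subseteq S$ yields $e \in \cl(S)$.

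I do not anticipate a genuine obstacle: the statement is a textbook consequence of the rank and circuit axioms, and every ingredient used has already been recalled in the preliminaries. The only conceptual subtlety worth flagging is that one never needs to select a \emph{specific} circuit inside $B \cup e$; the mere existence of some circuit suffices, and its containment of $e$ is then forced by the independence of $B$.
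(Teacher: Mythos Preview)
Your proof is correct. Both inclusions are argued cleanly from the definitions of rank, closure, and circuits, and the reasoning goes through without gaps.

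Note, however, that there is no proof in the paper to compare against: the lemma is stated with a citation to Oxley and is used as a black box. In that sense you have supplied strictly more than the paper does. Your argument is essentially the standard textbook proof of this fact, so there is no tension between what you wrote and what the paper relies on.
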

\begin{theorem} \label{theorem: Q-is-flats}
	In the quotient module $Q_M$ we have
	\begin{equation*}
		\sum_{i \in S} x_i = \sum_{i \in S'} x_i
	\end{equation*}
	if and only if $\cl(S) = \cl(S')$.
	Consequently, $\mathcal{L}(M) \cong Q_M$ via $F \mapsto \sum_{i \in F} x_i$.
\end{theorem}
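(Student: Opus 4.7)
The plan is to construct an explicit $\B$-module homomorphism from $Q_M$ to $\mathcal{L}(M)$ (viewed as a $\B$-module via the Remark on finite lattices) that has an obvious candidate inverse $F\mapsto \sum_{i\in F}x_i$, and deduce the equality-of-sums criterion from the fact that this is an isomorphism.

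First I would define $\phi : \Bdual{E} \to \mathcal{L}(M)$ by $\phi(x_i)=\cl(\{i\})$ and extending $\B$-linearly, so that $\phi(\sum_{i\in S}x_i) = \bigvee_{i\in S}\cl(\{i\}) = \cl(S)$ (since joins in $\mathcal{L}(M)$ are closures of unions). I then need to check that $\phi$ descends to $Q_M$, which by Lemma \ref{lemma: circuits} amounts to verifying $\phi$ is constant on each bend relation $\sum_{i\in C}x_i \sim \sum_{i\in C-j}x_i$ for a circuit $C$ and $j\in C$. But this is immediate: $C-j$ is independent (since $C$ is a \emph{minimal} dependent set), and $j\in C\subseteq (C-j)\cup j$, so the closure lemma stated just above gives $\cl(C)=\cl(C-j)$. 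This produces a well-defined map $\bar\phi : Q_M \to \mathcal{L}(M)$ with $\bar\phi(\sum_{i\in S}x_i)=\cl(S)$, which immediately proves the easier direction: if $\sum_{i\in S}x_i=\sum_{i\in S'}x_i$ in $Q_M$, then $\cl(S)=\cl(S')$.

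Next I would prove the converse by establishing the key identity
\[
\sum_{i\in S}x_i = \sum_{i\in \cl(S)}x_i \quad \text{in } Q_M
\]
by induction on $|\cl(S)-S|$. If $e\in \cl(S)-S$, the closure lemma supplies a circuit $C$ with $e\in C\subseteq S\cup e$, so $C-e\subseteq S$. Applying the bend relation for $\sum_{i\in C}x_i$ at $j=e$, and adding $\sum_{i\in S-(C-e)}x_i$ to both sides, yields $\sum_{i\in S}x_i = \sum_{i\in S\cup e}x_i$ in $Q_M$. Iterating adds all of $\cl(S)-S$, completing the induction. Combined with the previous step, this shows that $\sum_{i\in S}x_i = \sum_{i\in S'}x_i$ in $Q_M$ iff $\cl(S)=\cl(S')$, which is the first assertion of the theorem.

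Finally, for the isomorphism $\mathcal{L}(M)\cong Q_M$, the map $\psi : F\mapsto \sum_{i\in F}x_i$ is well-defined on flats, and the two compositions $\bar\phi\circ\psi$ and $\psi\circ\bar\phi$ are the identity by the identity above (flats satisfy $\cl(F)=F$). The map $\psi$ respects joins because $\sum_{i\in F}x_i + \sum_{i\in F'}x_i = \sum_{i\in F\cup F'}x_i = \sum_{i\in \cl(F\cup F')}x_i$, matching $F\vee F'=\cl(F\cup F')$ on the lattice side; hence $\psi$ is a $\B$-module isomorphism.

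I expect the main subtlety to be the inductive step for the identity $\sum_{i\in S}x_i=\sum_{i\in\cl(S)}x_i$: one must remember that congruences (not just generators) are being used, so that applying a single generating bend relation inside a larger sum really does hold in the quotient. Once this is clear, everything else is routine matroid theory plus the characterization of joins in $\mathcal{L}(M)$ as closures of unions.
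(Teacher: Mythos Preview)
Your proposal is correct and follows essentially the same route as the paper. The only difference is packaging: the paper directly compares two congruences on $\Bdual{E}$ (the bend congruence and the relation ``$\cl(S)=\cl(S')$''), explicitly verifying the latter is a congruence via $\cl(S\cup T)=\cl(\cl(S)\cup\cl(T))$, whereas you build a $\B$-module map $\phi:\Bdual{E}\to\mathcal{L}(M)$ and let its kernel congruence do that work automatically; the core steps---showing $\cl(C)=\cl(C-j)$ for circuits and establishing $\sum_{i\in S}x_i=\sum_{i\in\cl(S)}x_i$ by successively absorbing elements of $\cl(S)-S$ via the closure lemma---are identical in both arguments.
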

\begin{proof}
	First, we show that the relation $\sum_{i \in S} x_i \sim \sum_{i \in S'} x_i$ if and only if $\cl(S) = \cl(S')$ is a congruence on $\Bdual{E}$.  Since it is clearly an equivalence relation, we just need to check that it is a $\B$-submodule of $\Bdual{E}\times\Bdual{E}$, which in combinatorial terms means the following: if $\cl(S) = \cl(S')$ and $\cl(T) = \cl(T')$, then $\cl(S\cup T) = \cl(S'\cup T')$.  It suffices to verify that \[\cl(S \cup T) = \cl(\cl(S) \cup \cl(T)).\]  We have $S\subseteq \cl(S)$ and $T\subseteq \cl(T)$, so $S\cup T \subseteq \cl(S)\cup\cl(T)$ and hence $\cl(S\cup T) \subseteq \cl(\cl(S)\cup\cl(T))$ by monotonicity.  For the reverse containment, first use monotonicity to get $\cl(S) \subseteq \cl(S\cup T)$ and $\cl(T) \subseteq \cl(S\cup T)$, hence $\cl(S) \cup \cl(T) \subseteq \cl(S \cup T)$, then take the closure of both sides and apply the fact that closure is an idempotent operation.

Next, for any circuit $C$ and any $j\in C$ we claim that $\cl(C) = \cl(C-j)$.  Monotonicity implies the inclusion $\cl(C-j) \subseteq \cl(C)$, so to show the opposite inclusion, suppose $i\in E$ satisfies $\rk(C\cup i ) = \rk(C)$.  The fact that $C$ is a minimal dependent set implies $\rk(C) = \rk(C-j)$, so monotonicity of the rank function gives
\[\rk(C\cup i - j) \le \rk(C\cup i) = \rk(C) = \rk(C-j) \le \rk(C\cup i - j),\] hence $\rk(C \cup i - j) = \rk(C - j)$ and so $i\in \cl(C-j)$.  This shows that the congruence defining $Q_M$ is contained in the congruence discussed in the preceding paragraph.  Thus, by transitivity, it only remains to prove that for any $S\subseteq E$, in $Q_M$ we have \[\sum_{i \in S} x_i = \sum_{i\in \cl(S)} x_i.\] Write $\cl(S) - S = \{s_1, \ldots, s_m\}$. By the lemma, for each $s_j$ there exists a circuit $C_j$ such that $s_j \in C_j \subseteq S \cup s_j$.
	Then $\sum_{i \in C_j-s_j} x_i = \sum_{i \in C_j} x_i$ by our description of the congruence defining $Q_M$.  Due to idempotency, adding $\sum_{i \in S} x_i$ to both sides yields $\sum_{i \in S} x_i = \sum_{i \in S \cup s_j} x_i$.  Summing these latter equalities over all $j=1,\ldots,m$ establishes the desired equality.
\end{proof}

The tropical linear space $L_M$ also has a nice matroidal interpretation:
\begin{theorem}[{\cite[Theorem 3.8]{murota01}}] \label{theorem: L-cocycles}
	The submodule $L_M \subseteq \B^E$ is generated by $\sum_{i \in K} e_i$,
	where $K$ ranges over all cocircuits of $M$.
\end{theorem}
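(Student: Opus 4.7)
My plan is to unpack the definition of $L_M$ using Lemma \ref{lemma: circuits} and then translate the resulting support condition into the language of cocircuits via the elementary matroid fact (recalled in the preliminaries) that the complements of flats are exactly the unions of cocircuits.

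First I would fix $v = \sum c_i e_i \in \B^E$ and describe membership in $L_M$ concretely. By Lemma \ref{lemma: circuits}, the nonzero linear forms $f_A = \sum_{i\in A}[M]_{A-i}x_i$ defining $L_M$ as an intersection of tropical hyperplanes are precisely the forms $\sum_{i\in C}x_i$ for $C$ a circuit. Unpacking the tropical kernel of $\sum_{i\in C}x_i$: the vector $v$ lies in it iff $\sum_{i\in C}c_i = \sum_{i\in C-j}c_i$ in $\B$ for every $j\in C$, which (since $\B$-addition is max) happens iff $|\supp(v)\cap C|\ne 1$. Hence
\[L_M = \left\{\,v\in\B^E : |\supp(v)\cap C|\ne 1 \text{ for every circuit } C\text{ of } M\,\right\}.\]

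Next I would identify the supports appearing on the right-hand side. A subset $S\subseteq E$ meets every circuit in either $0$ or $\geq 2$ elements iff $E-S$ is a flat: if $e\in S$ and $e\in\cl(E-S)$ then there is a circuit $C$ with $e\in C\subseteq(E-S)\cup e$, giving $|C\cap S|=1$, and conversely a circuit $C$ with $C\cap S=\{e\}$ forces $e\in\cl(E-S)$. Combined with the fact recalled in the preliminaries (complements of flats are precisely unions of cocircuits), this shows that the admissible supports are exactly the unions of cocircuits of $M$. Therefore
\[L_M = \left\{\,v\in\B^E : \supp(v) \text{ is a union of cocircuits of } M\,\right\},\]
with the empty union corresponding to $v=0$.

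Finally I would observe that the $\B$-submodule of $\B^E$ generated by the cocircuit indicators $\{\sum_{i\in K}e_i : K\text{ a cocircuit}\}$ consists exactly of vectors whose support is a union of cocircuits, because $\B$-linear combinations are just finite sums (hence the support of a sum is the union of the summand supports). Matching this with the previous display completes the proof. The only step requiring genuine matroid input is the support-vs-flat translation, and since the relevant characterization of flats is invoked in the preliminaries this reduces to a short argument rather than a real obstacle.
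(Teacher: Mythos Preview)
Your argument is correct. Each step checks out: the unpacking of the tropical kernel condition via Lemma~\ref{lemma: circuits} gives exactly the ``no circuit meets $\supp(v)$ in a single element'' criterion; the equivalence of this criterion with $E-\supp(v)$ being a flat follows from the closure description recalled in the preliminaries; and the identification of complements of flats with unions of cocircuits finishes it off.

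This is a genuinely different route from the one the paper takes. The paper does not prove the theorem from scratch; it cites the result that the vectors $g_A = \sum_{i\in E-A}[M]_{A\cup i}e_i$ (for $A\in\binom{E}{d-1}$) span $L_M$, and then supplies Lemma~\ref{lemma: cocircuits} to identify the nonzero $g_A$ with the cocircuit indicators. In other words, the paper works on the ``cocircuit side'' and imports the spanning statement from the literature, whereas you work on the ``circuit side'' via Lemma~\ref{lemma: circuits} and characterize $L_M$ directly as a set. Your approach is more self-contained within the paper's framework and, as a bonus, yields the explicit description $L_M = \{v : \supp(v)\text{ is a union of cocircuits}\}$, which is exactly what gets used in the proof of Theorem~\ref{theorem: algebraic-duality} and elsewhere. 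The paper's approach, on the other hand, makes the connection to the dual forms $g_A$ explicit, which is natural from the exterior-algebra viewpoint and parallels Lemma~\ref{lemma: circuits} nicely.
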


(See also \cite[Proposition 4.1.9]{frenk13} and \cite[\S4.4]{giansiracusa17}.)  Indeed, these authors establish that \[g_A := \sum_{i\in E-A}[M]_{A\cup i}e_i,\] for $A \in \binom{E}{d-1}$, span $L_M$, and similarly to Lemma \ref{lemma: circuits} we can check that these are the indicator vectors of the cocircuits:
\begin{lemma}\label{lemma: cocircuits}
If $A\in \binom{E}{d-1}$ and $g_A \ne 0$, then $g_A = \sum_{i\in K}e_i$ for some cocircuit $K$ of $M$, and conversely for any cocircuit $K$ of $M$ we have $\sum_{i\in K}e_i = g_A$ for some $A\in \binom{E}{d-1}$.
\end{lemma}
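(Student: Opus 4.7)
The plan is to obtain this as a direct consequence of Lemma \ref{lemma: circuits} applied to the dual matroid $M^*$. Since $M^*$ has rank $n-d$ and its circuits are by definition the cocircuits of $M$, Lemma \ref{lemma: circuits} for $M^*$ characterizes precisely the indicator vectors of cocircuits of $M$, just indexed over $\binom{E}{(n-d)+1}$ rather than over $\binom{E}{d-1}$.

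The bridge between the two indexings is the obvious bijection $\binom{E}{d-1} \longleftrightarrow \binom{E}{n-d+1}$ sending $A \mapsto A' := E - A$, together with the identity $[M^*]_I = [M]_{E - I}$ coming from the fact that the bases of $M^*$ are the complements of the bases of $M$. Setting $A' := E - A$ and using $(E-A) - j = E - (A \cup j)$ when $j \in E - A$, a straightforward substitution gives
\begin{equation*}
f_{A'} := \sum_{j \in A'} [M^*]_{A' - j}\, x_j = \sum_{j \in E - A} [M]_{A \cup j}\, x_j.
\end{equation*}
Up to the notational difference between the basis $\{e_i\}$ of $\B^E$ and the dual basis $\{x_i\}$ of $\Bdual{E}$, this is exactly $g_A$. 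In particular, $g_A \neq 0$ if and only if $f_{A'} \neq 0$, and the two vectors have the same support in $E$.

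With this identification in hand, I would invoke Lemma \ref{lemma: circuits} for $M^*$: when $f_{A'} \neq 0$ it equals the indicator of some circuit of $M^*$ (i.e.\ cocircuit of $M$), and conversely every circuit of $M^*$ arises this way from some $A' \in \binom{E}{(n-d)+1}$, hence from some $A \in \binom{E}{d-1}$ under the bijection. Transporting both conclusions back through the coefficient identification yields both directions of the claim for $g_A$.

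I do not anticipate a substantive obstacle here; the argument is really just dualizing Lemma \ref{lemma: circuits}. The only care required is bookkeeping: verifying the cardinality matchup $|A| = d-1 \iff |A'| = (n-d)+1$ and the substitution $[M^*]_{A' - j} = [M]_{A \cup j}$, both of which are immediate from complementation.
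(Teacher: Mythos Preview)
Your proposal is correct, and it takes a genuinely different route from the paper's proof. You reduce the statement to Lemma~\ref{lemma: circuits} applied to $M^*$ via the complementation bijection $A \leftrightarrow E-A$ and the identity $[M^*]_{I} = [M]_{E-I}$; this is clean and essentially formal once the substitution $[M^*]_{A'-j} = [M]_{A\cup j}$ is checked. The paper, by contrast, argues directly in $M$: given $g_A \ne 0$ it picks $e$ with $B := A\cup e$ a basis, identifies the relevant cocircuit as the fundamental cocircuit $K = C_{M^*}(e, E-B)$, and then verifies the equality of supports using the fact (from \cite[\S2.2 Exercise~10(b)]{oxley11}) that for $i \in E-B$ one has $i \in K$ if and only if $e \in C(i,B)$. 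Your approach is shorter and makes the duality with Lemma~\ref{lemma: circuits} transparent; the paper's approach is more self-contained and gives the cocircuit explicitly as a fundamental cocircuit, at the cost of invoking an additional matroid-theoretic fact.
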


\begin{proof}
If $A\in \binom{E}{d-1}$ and $g_A \ne 0$, then $B := A\cup e$ is a basis for some $e\in E-A$.  Let \[K := C_{M^*}(e,E-(A\cup e))\] be the fundamental cocircuit of $e$ with respect to $A\cup e$.  We claim that $g_A = \sum_{i\in K}e_i$, i.e., $A\cup i$ is a basis, for $i\in E-A = (E-B)\cup e$, if and only $i\in K$.  For $i=e$ this is trivial, and by \cite[\S2.2 Exercise 10(b)]{oxley11} for $i\in E-B$ we have $i\in K$ if and only if $e\in C(i,A\cup e)$; this readily implies the claim.  For the converse, since any cocircuit $K$ is a fundamental cocircuit, say $K=C_{M^*}(e,E-B)$, we can set $A = B-e$ and then $g_A = \sum_{i\in K}e_i$.
\end{proof}

\subsection{Algebraic duality}

In \cite[Remark 4.3.3]{giansiracusa17} it is conjectured that a duality holds between a certain submodule and quotient module.  A special case of this is the assertion $Q_M = L_M^\vee$.  This is known by some experts, but we present here a proof based on our matroidal interpretation of these modules (recall from Equation \eqref{eq:Qdual} that $L_M = Q_M^\vee$ holds essentially by definition):

\begin{theorem} \label{theorem: algebraic-duality}
	We have $Q_M = L_M^\dual$.
\end{theorem}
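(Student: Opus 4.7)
The plan is to analyze the restriction map $\Psi: (\B^E)^\vee \to L_M^\vee$ induced by the inclusion $L_M \subseteq \B^E$, check that it kills the bend congruence defining $Q_M$, and then show that the resulting map $\bar{\Psi}: Q_M \to L_M^\vee$ is an isomorphism by combining the combinatorial descriptions of $Q_M$ and $L_M$ already established.

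The first step is essentially tautological: since $L_M = \tropker(-\wedge [M])$ is by definition the locus where every bend relation of $-\wedge [M]$ holds, the restriction $\Psi$ factors through $Q_M$.  Concretely, $\bar{\Psi}$ sends the class of $\sum_{i\in S}x_i$ to the functional $v\mapsto [\,\supp(v)\cap S\ne\emptyset\,]$ on $L_M$.  To analyze this I would use an explicit description of $L_M$: by Theorem \ref{theorem: L-cocycles} and Lemma \ref{lemma: cocircuits}, $L_M$ is spanned by cocircuit indicator vectors, so its elements are indicator vectors of unions of cocircuits, and the elementary fact recalled in the preliminaries identifies unions of cocircuits with complements of flats.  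Hence the elements of $L_M$ are precisely the vectors $\sum_{i\in E-F}e_i$ as $F$ ranges over $\mathcal{L}(M)$, and $\bar{\Psi}\left(\sum_{i\in S}x_i\right)$ sends $\sum_{i\in E-F}e_i$ to $[\,S\not\subseteq F\,]$.

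For injectivity, if the images of $\sum_{i\in S}x_i$ and $\sum_{i\in S'}x_i$ agree, then $[\,S\subseteq F\,]=[\,S'\subseteq F\,]$ for every flat $F$, so the smallest flats containing $S$ and $S'$ coincide, giving $\cl(S)=\cl(S')$, which by Theorem \ref{theorem: Q-is-flats} is equality in $Q_M$.  For surjectivity, given $\phi\in L_M^\vee$ I would consider the zero set $\mathcal{F}:=\{F\in\mathcal{L}(M) : \phi(\sum_{i\in E-F}e_i)=0\}$.  This subset of $\mathcal{L}(M)$ contains $E$, is closed under intersection of flats because $\sum_{i\in E-F_1}e_i+\sum_{i\in E-F_2}e_i=\sum_{i\in E-(F_1\cap F_2)}e_i$ and $\phi$ is additive, and is upward-closed because $F\subseteq F'$ implies $\sum_{i\in E-F'}e_i\le\sum_{i\in E-F}e_i$ pointwise while $\phi$ preserves the order.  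In the finite lattice $\mathcal{L}(M)$ these properties force $\mathcal{F}$ to be a principal filter $\{F:F\supseteq F_0\}$, and one then verifies $\phi=\bar{\Psi}\left(\sum_{i\in F_0}x_i\right)$.

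The main obstacle is bookkeeping around the order-reversal produced by the complementation $F\leftrightarrow E-F$ between $\mathcal{L}(M)$ and the lattice $L_M$; this reversal is exactly what makes the surjectivity argument amount to the finite-lattice identity $L^\vee\cong L^{op}$ for $\B$-modules.  Once one is comfortable with that correspondence, the entire proof assembles cleanly from Theorems \ref{theorem: Q-is-flats} and \ref{theorem: L-cocycles}.
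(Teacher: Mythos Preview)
Your argument is correct and, for injectivity, essentially identical to the paper's (the paper takes distinct flats $F,F'$ with $F'\not\subseteq F$ and evaluates at $g_{E-F}\in L_M$, which is precisely your criterion $[S\subseteq F]=[S'\subseteq F]$ in contrapositive form).  The genuine difference is in surjectivity.  The paper does not construct a preimage at all: it observes that the map in question is the double-dual map $\lambda:Q_M\to Q_M^{\vee\vee}$, invokes the fact that $\B$ is an injective $\B$-module (citing \cite{wang94} and \cite[Lemma 3.2.1(ii)]{frenk13}) so that dualizing an injection yields a surjection, and concludes from the commutative square with $V\cong V^{\vee\vee}$ that $q^{\vee\vee}$, and hence $\lambda$, is onto.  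Your approach instead works inside the finite lattice $\mathcal{L}(M)$: you show the zero set of an arbitrary $\phi\in L_M^\vee$ is a nonempty, upward-closed, meet-closed subset, hence a principal filter $\{F\supseteq F_0\}$, and exhibit $\sum_{i\in F_0}x_i$ as a preimage.  Your route is more elementary and entirely self-contained (no external injectivity result needed), while the paper's route is shorter and highlights that surjectivity is a general feature of quotients of finitely generated free $\B$-modules rather than something special to matroids.
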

\begin{proof}
	Since $L_M = Q_M^\dual$,
	our goal is to show that the natural map $\lambda: Q_M \to Q_M^{\dual\dual}$ is an isomorphism.
	Let $V := \Bdual{E}$.
	If $q: V \surj Q_w$ denotes the quotient map, then the diagram
	\begin{equation*}
		\xymatrix{ 
			V  \ar[r]^\cong \ar@{->}[d]^q &  V^\ddual \ar@{->}[d]^{q^\ddual} \\
			Q_M \ar[r]^\lambda &  Q_{M}^\ddual }
	\end{equation*}
	commutes by naturality.  The dual of a surjective map is injective, so $q^\vee$ is injective.  By \cite{wang94}, $\B$ is an injective $\B$-module and so the dual of any injective map of $\B$-modules is surjective (see also \cite[Lemma 3.2.1(ii)]{frenk13}).  Thus $q^\ddual$ is surjective, so by commutativity $\lambda$ is also surjective.

	To show $\lambda$ is injective, suppose $f=\sum_{i \in F} x_i$ and $f'=\sum_{i \in F'} x_i$ are distinct elements of $Q_M$.  We may assume $F$ and $F'$ are distinct flats of $M$ by Theorem \ref{theorem: Q-is-flats};
	without loss of generality, $F' \not\subseteq F$.  Complements of flats are precisely the unions of cocircuits, and by Theorem \ref{theorem: L-cocycles} the indicator vectors of unions of cocircuits are precisely the elements of $L_M$, so if we set $K := E-F$ and $g_K := \sum_{i \in K} e_i$, then $g_K \in L_M$.
	Viewing $L_M$ as $Q_M^\dual$, we see that $g_K(f) = 0$ since $K$ and $F$ are disjoint, while $g_K(f') = 1$ since $K$ and $F'$ intersect nontrivially and addition is idempotent.  Thus $\lambda(f) \neq \lambda(f')$, as desired.
\end{proof}

\begin{remark}
	While the quotient module $Q_M$ had not been introduced at the time Frenk wrote his thesis, portions of the results in this subsection and the previous one are in \cite{frenk13}.  Indeed, in \cite[Proposition 4.1.5]{frenk13} he shows that $L_M$ is  isomorphic as a lattice to $\mathcal{L}(M)$, and in \cite[Remark on p.89]{frenk13} he shows that $L^\vee$ is in bijection with $\mathcal{L}(M)$.  Note that $L_M$ is isomorphic to $\mathcal{L}(M)$ with the operation of intersection while $Q_M$ is isomorphic to it with the operation of join.
\end{remark}

\subsection{Intermediate wedge powers}

The quotient module $Q_M$ is the lattice of flats $\mathcal{L}(M)$ (Theorem \ref{theorem: Q-is-flats}), and the top wedge power $\bigwedge^d Q_M$ is isomorphic to $\B$ \cite[Theorem 6.1.1]{giansiracusa17} and hence is trivially a matroid lattice.  A natural question then is whether for any $1 \le k \le d$ there exists a matroid $N$ on the ground set $\binom{E}{k}$ such that the quotient $\Bdual{\binom{E}{k}} \twoheadrightarrow Q_N$ factors as \[\Bdual{\binom{E}{k}} \cong \bigwedge^k\Bdual{E} \twoheadrightarrow \bigwedge^k Q_M \cong Q_N,\] We conclude this section by showing that this is not the case in general.

\begin{example}
	Let $M = U_{3,6}$ be the uniform rank 3 matroid on $E = \{1,2,3,4,5,6\}$.  The circuits are $\binom{E}{4}$, so by Lemma \ref{lemma: wedge-power-relations}, 
	$\bigwedge^2 Q_M$ is generated by the wedge monomials $x_{ij} := x_i \wedge x_j$ subject to the bend relations of $x_i \wedge (x_j + x_k + x_\ell + x_m)$.
	These relations are generated by just those where $i \in \{j,k,\ell,m\}$,
	so the relations are generated by $\bend{x_{ij} + x_{ik} + x_{i\ell}}$ for all $\{i,j,k,\ell\}\in\binom{E}{4}$. 

Suppose there exists a matroid $N$ such that $\bigwedge^2 Q_M \cong Q_N$ as discussed above.  First, we claim that $N$ has rank at most 3.  Indeed, since $\cl(\{1,2,3\}) = E$ we know that $x_1+x_2+x_3 = \sum_{i=1}^6 x_i$ in $Q_M$ and so in $\bigwedge^2 Q_M$ we have
\[x_{12} + x_{13} + x_{23} = (x_1 + x_2 + x_3) \wedge (x_1 + x_2 + x_3) = \left(\sum_{i=1}^6 x_i\right)\wedge \left(\sum_{i=1}^6 x_i\right) = \sum_{1 \le i < j \le 6} x_{ij},\] hence $\rk(N) \le 3$ by the interpretation of $\bigwedge^2 Q_M \cong Q_N$ as $\mathcal{L}(N)$.  Next, we claim that $x_{12} + x_{34} + x_{56}$ and all of its subsums are distinct.  This follows from the observation that all non-trivial relations in the congruence generated by the $\bend{x_{ij} + x_{ik} + x_{i\ell}}$ have at least two variables with a common index.  This means that all subsets of $\{12,34,56\}$ are flats of $N$, so $\{12,34,56\}$ is a flat of rank 3.  But since $N$ itself then has rank 3, by the flats interpretation of $\bigwedge^2 Q_M$ we must have  \[x_{12} + x_{34} + x_{56} = \sum_{1 \le i < j \le 6} x_{ij}\] whereas by the explicit description of the relations in $\bigwedge^2 Q_M$ we know that this equality does not hold.  This contradiction shows that no such matroid $N$ exists.
\end{example}

\begin{remark}
It remains an open question to find a matroidal interpretation of the intermediate wedge powers of the quotient $\Bdual{E} \twoheadrightarrow Q_M$.
\end{remark}

\section{Matroid minors}

For a subset $T\subseteq E$ of the ground set of the matroid $M$, the \emph{deletion} $M\backslash T$ is by definition the matroid restriction $M|(E-T)$.  The \emph{contraction} can be defined in terms of matroid duality, namely $M/T = (M^*\backslash T)^*$.  These are both matroids on the ground set $E-T$.  Concretely, if $\mathcal{B} \subseteq \binom{E}{d}$ denotes the set of bases of $M$, then the bases of the deletion $M\backslash T$ are the maximal members of $\{B-T : B\in \mathcal{B}\}$ and the bases of the contraction $M/T$ are the subsets $B' \subseteq E-T$ such that $B'\cup B_T \in \mathcal{B}$, where $B_T$ is any fixed basis of the restriction $M|T$ \cite[Corollary 3.1.8]{oxley11}.  The deletion and contraction operations commute (with each other and with themselves) \cite[Proposition 3.1.25]{oxley11} and a \emph{minor} is any iteration of these operations.

The definition of deletion and contraction, and hence of minors, has been generalized from matroids to valuated matroids \cite{dress92,murota10}; the effect of this on the associated tropical linear spaces was described by Frenk in \cite[Lemma 4.1.11]{frenk13}.  Over $\B$, Frenk's description specializes to the following, which by Theorem \ref{theorem: L-cocycles} is an algebraic relation between the cocircuits of a matroid and the cocircuits of its minors.  We include here a short matroid-theoretic proof.

\begin{theorem} \label{theorem: L-minors}
	Let $T \subseteq E$ be a subset of the ground set of the matroid $M$.
	\begin{enumerate}[(a)]
	\item	The tropical linear space associated to the contraction $M/T$ is 
		\begin{equation*}
			L_{M/T} = L_M \cap \B^{E-T},
		\end{equation*}
		where the inclusion $\B^{E-T} \subseteq \B^E$ is induced by the inclusion $E-T\subseteq E$.
	\item The tropical linear space associated to the deletion $M\backslash T$ is
		\begin{equation*}
			L_{M\backslash T} = \pi_{E - T} \left(L_M\right),
		\end{equation*}
		where $\pi_{E-T}$ is projection onto the coordinate subspace $\B^{E-T} \subseteq \B^E$.
	\end{enumerate}
\end{theorem}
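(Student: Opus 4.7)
My plan is to leverage Theorem \ref{theorem: L-cocycles}, which gives $L_M$ as the $\B$-span of the cocircuit indicators $\sum_{i \in K}e_i$; since $\B$ is idempotent, the elements of $L_M$ are exactly the indicator vectors $\sum_{i \in U}e_i$ for unions $U = K_1 \cup \cdots \cup K_m$ of cocircuits of $M$ (equivalently, as used in the proof of Theorem \ref{theorem: algebraic-duality}, the complements of flats of $M$). Both parts then reduce to standard matroid-theoretic descriptions of the cocircuits or flats of the relevant minor.

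For part (a), I would first establish that the cocircuits of the contraction $M/T$ are exactly the cocircuits of $M$ that happen to lie inside $E - T$. This is immediate from the definition $M/T = (M^* \backslash T)^* = (M^*|(E-T))^*$ together with the elementary fact that the circuits of a restriction $M^*|(E-T)$ are precisely the circuits of $M^*$ contained in $E-T$. Granting this, the elements of $L_{M/T}$ are the indicator vectors $\sum_{i \in U}e_i$ for unions $U$ of cocircuits of $M$ that lie in $E - T$. On the other hand, such an indicator vector lies in $\B^{E-T}$ if and only if $U \subseteq E - T$, which, since $U$ is a union of cocircuits, forces each constituent cocircuit to be contained in $E - T$. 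Hence $L_{M/T} = L_M \cap \B^{E-T}$.

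For part (b), I would instead use the flat-complement description of $L_M$. The key combinatorial input is the identity $\cl_{M|(E-T)}(S) = \cl_M(S) \cap (E-T)$ for $S \subseteq E - T$, from which one readily deduces that the flats of $M \backslash T = M|(E-T)$ are exactly the sets $F \cap (E-T)$ as $F$ ranges over the flats of $M$. Now the coordinate projection $\pi_{E-T}$ sends the indicator vector of $E - F$ (a typical element of $L_M$) to the indicator vector of $(E-T) - F = (E-T) - (F \cap (E-T))$, which is precisely the complement in $E-T$ of the flat $F \cap (E-T)$ of $M \backslash T$. As $F$ varies over flats of $M$, $F \cap (E-T)$ ranges over all flats of $M \backslash T$, giving $\pi_{E-T}(L_M) = L_{M\backslash T}$.

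No single step is particularly difficult in isolation; the main obstacle is simply choosing the right combinatorial description for each half and keeping careful track of the ground sets $E$, $T$, and $E-T$. Part (a) is cleanest in the cocircuit picture, while part (b) is cleanest in the flat picture, nicely illustrating the complementary roles of $L_M$ and its dual interpretation via $Q_M \cong \mathcal{L}(M)$ established in Theorem \ref{theorem: Q-is-flats}.
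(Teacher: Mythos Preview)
Your proof is correct and follows essentially the same approach as the paper's: both parts rest on Theorem~\ref{theorem: L-cocycles} together with the standard descriptions of the cocircuits of $M/T$ (those cocircuits of $M$ contained in $E-T$) and the closure formula $\cl_{M\backslash T}(S)=\cl_M(S)\cap(E-T)$. The only cosmetic difference is in part~(b): for the inclusion $L_{M\backslash T}\subseteq \pi_{E-T}(L_M)$ the paper cites the cocircuit description of $M\backslash T$ directly, whereas you argue via flats that every flat of $M\backslash T$ is of the form $F\cap(E-T)$ for a flat $F$ of $M$---an equivalent and equally short route.
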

\begin{proof}
By Theorem \ref{theorem: L-cocycles}, $L_{M/T}$ is generated by the indicator vectors of the cocircuits of $M/T$.
		 	By \cite[3.1.17]{oxley11}, 
			the cocircuits of the contraction $M/T$ are the cocircuits of $M$ disjoint from $T$.
		For cocircuits $K_i$ of $M$ with indicator vectors $g_{K_i} := \sum_{j\in K_i}e_j \in L_M$, we have 
		$\sum_{i} g_{K_i} \in L_M \cap \B^{E-T}$ if and only if
		the coefficient of $e_t$ in $g_{K_i}$ is zero for every $i$ and $t \in T$, since $\B^E$ is a free $\B$-module.
		Thus, $L_M \cap \B^{E-T}$ is also generated by the indicator vectors of the cocircuits of $M/T$.  This proves (a).

For (b), we first prove that if $K$ is a cocircuit of $M$, then $K-T$ is a union of cocircuits of $M\backslash T$.
		Since unions of cocircuits are the complements of flats,
		it suffices to show that if $F$ is a flat of $M$, then $F-T$ is a flat of $M\backslash T$.
		Being a flat is equivalent to equaling the closure, and by \cite[3.1.15]{oxley11} the closure functions are related by $\cl_{M\backslash T}(S) = \cl_{M}(S) - T$.
		Hence \[\cl_{M\backslash T}(F-T) = \cl_M(F-T) -T = F-T,\]
		since \[F-T \subseteq \cl_M(F-T)-T \subseteq \cl_M(F) -T= F-T.\]
		(Monotonicity of the closure is \cite[Lemma 1.4.3]{oxley11}.) Thus, $\pi_{E-T}(L_M) \subseteq L_{M\backslash T}$.
		Conversely, by \cite[3.1.16]{oxley11},
		the cocircuits of $M\backslash T$ are the minimal sets $K-T$ for $K$ a cocircuit of $M$.
		Thus, $\pi_{E-T}(L_M)$ contains the generators of $L_{M\backslash T}$.
\end{proof}

Minors, as with many matroid constructions, can be defined in terms of various cryptomorphisms.
Theorem \ref{theorem: L-minors} provides an algebraic formulation of minors in terms of cocircuits,
but given the importance of tropical Pl\"ucker vectors (for us, matroid basis indicator vectors),
an algebraic description in terms of bases would be convenient.  This is complicated in general, but for the deletion or contraction by a single element there is a nice statement:

\begin{proposition}\label{prop: one-minor-plucker}
For $t\in E$, the unique expression \[[M] = w\wedge e_t + w'\] with $w \in \bigwedge^{d-1} \B^{E-t}$ and $w' \in \bigwedge^d \B^{E-t}$ is: 
\[[M] = \begin{cases} 
[M/t]\wedge e_t + [M\backslash t] &\mbox{if } w \ne 0 \text{ and } w' \ne 0 \\ 
[M\backslash t]\wedge e_t  = [M/t]\wedge e_t & \mbox{if } w'=0 \\
[M\backslash t] = [M/t] & \mbox{if } w=0
\end{cases}.\]
\end{proposition}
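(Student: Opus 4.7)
The plan is to establish uniqueness first, then produce a canonical decomposition of $[M]$, and finally match the two summands with $[M/t]$ and $[M\backslash t]$ via a case analysis on whether $t$ is a loop, a coloop, or neither.

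For uniqueness, the idempotent exterior algebra $\bigwedge\B^E$ from \S\ref{sec:idextalg} is commutative (the defining relations are $e_i^2\sim 0$ on top of $\Sym\,\B^E$), so $\bigwedge^d\B^E$ is a free $\B$-module with basis $\{e_I : |I|=d\}$ that splits canonically as
\[\bigwedge^d\B^E \;=\; \bigl(\bigwedge^{d-1}\B^{E-t}\bigr)\wedge e_t \;\oplus\; \bigwedge^d\B^{E-t},\]
the two summands indexed by whether $t\in I$. This decomposition gives the uniqueness of $w$ and $w'$, and, more usefully, splitting the basis sum $[M]=\sum_{B\in\mathcal{B}} e_B$ according to whether $t\in B$ forces
\[w = \sum_{B\in\mathcal{B},\, t\in B} e_{B-t} \qquad\text{and}\qquad w' = \sum_{B\in\mathcal{B},\, t\notin B} e_B.\]

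It then remains to identify these two sums with $[M/t]$ and $[M\backslash t]$. Here I would apply the basis descriptions of minors recalled at the start of \S4: the bases of $M\backslash t$ are the maximal elements of $\{B-t : B\in\mathcal{B}\}$, and the bases of $M/t$ are $\{B'\subseteq E-t : B'\cup B_T \in\mathcal{B}\}$ for any chosen basis $B_T$ of $M|\{t\}$. Three cases arise. If $t$ is neither a loop nor a coloop, then both $w$ and $w'$ are nonzero; choosing $B_T=\{t\}$ (valid since $t$ is not a loop) identifies $w$ with $[M/t]$, while $M\backslash t$ has rank $d$ with bases precisely the bases of $M$ missing $t$, so $w' = [M\backslash t]$. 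If $t$ is a coloop, then every basis contains $t$, forcing $w'=0$; the choice $B_T=\{t\}$ again gives $w = [M/t]$, and $M\backslash t$ now has rank $d-1$ with bases $\{B-t : B\in\mathcal{B}\}$, so $w = [M\backslash t]$ as well. If $t$ is a loop, no basis contains $t$, so $w=0$; in this case $M|\{t\}$ has rank $0$, so $B_T=\emptyset$ and $[M/t]$ is the sum of $e_B$ over bases $B$ of $M$ disjoint from $t$, matching $w'$, while $[M\backslash t] = w'$ trivially.

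The only real pitfall is choosing $B_T$ correctly in the contraction formula (it must be a basis of $M|\{t\}$, which is $\emptyset$ when $t$ is a loop and $\{t\}$ otherwise), and tracking which of $w, w'$ vanishes in the loop and coloop cases so that the surviving summand is correctly identified with \emph{both} $[M/t]$ and $[M\backslash t]$; no deeper obstacle appears.
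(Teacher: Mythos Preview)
Your proposal is correct and follows essentially the same approach as the paper: both arguments use the canonical splitting of $\bigwedge^d\B^E$ by whether $t\in I$, then identify $w$ and $w'$ with $[M/t]$ and $[M\backslash t]$ using the standard basis descriptions of deletion and contraction (maximal sets $B-t$, and sets $B'$ with $B'\cup B_T$ a basis). Your case split by loop/coloop/neither is exactly the paper's split by $w=0$/$w'=0$/both nonzero, rephrased; if anything, you are slightly more explicit about uniqueness and about the contraction in the $w=0$ case, which the paper leaves implicit.
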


\begin{proof}
By \cite[3.1.14]{oxley11}, the bases of $M\backslash t$ are the maximal sets of the form $B-t$ for bases $B$ of $M$.  If $w'\ne 0$, then there exists bases of $M$ that do not contain $t$ (these are precisely the bases corresponding to $w'$) and for all such $B$ we have that $B-t = B$ has size $d$, whereas for each basis $B$ corresponding to $w\wedge e_t$ (i.e., those containing $t$) we have $|B-t| = d-1$.  Since all bases of a matroid have the same size, it must then be that $[M\backslash t] = w'$.  On the other hand, if $w'= 0$ then every basis of $M$ contains $t$ so the sets of the form $B-t$ for bases $B$ of $M$ are precisely those corresponding to $w$; since they all have the same size, $d-1$, they are all maximal, hence $[M\backslash t] = w$.  Finally, if $w \ne 0$, then there exists a basis of $M$ that contains $t$, so $[M|t] = e_t$.  By \cite[Corollary 3.1.8]{oxley11}, the bases of $[M/t]$ are then all the subsets $S\subseteq E-t$  such that $S\cup e_t$ is a basis of $M$, so $w = [M/t]$.
\end{proof}

\section{Strong maps of matroids}

Here we describe strong maps of matroids in terms of the modules $Q_M$ and $L_M$.  This leads to a faithful embedding of the category of matroids with strong maps into a category of $\B$-module homomorphisms.  While this embedding is not full, we are able to reverse-engineer it and uncover a combinatorially natural extension of the notion of strong map that gives a category of matroids that does embed fully faithfully in our module-theoretic category.

\subsection{Algebraic characterization of strong maps}\label{subsec:strong}

A \textit{strong map} $f: M \to N$ between matroids $M$ and $N$ on ground sets $E$ and $F$
is a function $f: E\sqcup \bullet \to F\sqcup\bullet$, with $f(\bullet) = \bullet$, such that if $M_\bullet$ and $N_\bullet$ are the extensions of $M$ and $N$ to $E\sqcup\bullet$ and $F\sqcup\bullet$ with $\bullet$ a loop, 
then the preimage of every flat of $N_\bullet$ is a flat of $M_\bullet$ (cf. \cite[Definition 8.1]{white86}).  
This condition translates nicely into our algebraic framework.  First, some notation: associated to any set-theoretic map $f : E\sqcup\bullet \to F\sqcup\bullet$ is the $\B$-module homomorphism \[f_* : \Bdual{E} \to \Bdual{F}\] determined by 
\begin{equation*}
	f_*(x_i) =\begin{cases} x_{f(i)} &\mbox{if }  f(i) \neq \bullet \\
			0	&\mbox{if }  f(i) = \bullet
		\end{cases}
\end{equation*}
for all $i \in E$.  Consequently, there is a linearly dual map $f_*^\vee : \B^F \rightarrow \B^E$.

\begin{proposition} \label{prop: L-Q-strong-maps}
	For a function $f : E \sqcup \bullet \to F\sqcup\bullet$ with $f(\bullet) = \bullet$, the following are equivalent:
	\begin{enumerate}
		\item $f$ defines a strong map $M \to N$ of matroids; \label{condition:strong}
		\item $f_\ast$ induces a commutative square of $\B$-modules
			\begin{equation*}
				\xymatrix{ \Bdual{E} \ar@{->>}[d] \ar[r]^{f_\ast} & \Bdual{F}\ar@{->>}[d] \\ Q_M \ar[r] & Q_{N}; }
			\end{equation*} \label{condition:Q}
		\item $f_\ast^\dual(L_N) \subseteq L_M$. \label{condition:L}
	\end{enumerate}
\end{proposition}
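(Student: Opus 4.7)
My plan splits into two pieces: (\ref{condition:strong}) $\Leftrightarrow$ (\ref{condition:Q}), which translates commutativity of the square into a closure condition via Theorem \ref{theorem: Q-is-flats}; and (\ref{condition:Q}) $\Leftrightarrow$ (\ref{condition:L}), which is a purely formal consequence of the duality $L_M = Q_M^\vee$, $Q_M = L_M^\vee$ established in Theorem \ref{theorem: algebraic-duality}.

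For (\ref{condition:strong}) $\Leftrightarrow$ (\ref{condition:Q}), by Theorem \ref{theorem: Q-is-flats} I identify $Q_M$ and $Q_N$ with the lattices of flats of $M$ and $N$ via $F \leftrightarrow \sum_{i \in F} x_i$. Under this identification, $f_\ast$ sends $\sum_{i \in S} x_i$ to $\sum_{j \in f(S)\setminus\{\bullet\}} x_j$, which represents the flat $\cl_N(f(S)\setminus\{\bullet\})$. Hence the square in (\ref{condition:Q}) commutes iff $\cl_M(S) = \cl_M(S')$ forces $\cl_N(f(S)\setminus\{\bullet\}) = \cl_N(f(S')\setminus\{\bullet\})$ for all $S, S' \subseteq E$; by setting $S' = \cl_M(S)$ and using monotonicity, this simplifies to
\[f(\cl_M(S))\setminus\{\bullet\} \subseteq \cl_N(f(S)\setminus\{\bullet\}) \quad\text{for every } S \subseteq E. \qquad(\ast)\]
I then verify $(\ast)$ is equivalent to the strong map condition---that for every flat $G$ of $N$ the set $H_G := f^{-1}(G) \cup (f^{-1}(\bullet)\setminus\{\bullet\})$ is a flat of $M$---by applying $(\ast)$ to $S = H_G$ in one direction (using $f(H_G)\setminus\{\bullet\} \subseteq G$ and that $G$ is a flat of $N$) and applying the strong map hypothesis with $G := \cl_N(f(S)\setminus\{\bullet\})$ in the other (noting $S \subseteq H_G$, so $\cl_M(S) \subseteq H_G$).

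For (\ref{condition:Q}) $\Leftrightarrow$ (\ref{condition:L}), the square in (\ref{condition:Q}) commutes iff $q_N \circ f_\ast$ factors through $q_M$, iff $q_N(f_\ast(a)) = q_N(f_\ast(b))$ whenever $q_M(a) = q_M(b)$. By Theorem \ref{theorem: algebraic-duality}, the natural map $Q_N \to Q_N^{\vee\vee}$ is an isomorphism, so two elements of $Q_N$ coincide iff they pair identically with every $\psi \in Q_N^\vee = L_N$. Since the inclusion $L_N \hookrightarrow \B^F$ is identified with the dual of the quotient $\Bdual{F} \twoheadrightarrow Q_N$, the pairing $\psi(q_N(f_\ast(a)))$ equals $(f_\ast^\vee \psi)(a)$. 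Hence the square commutes iff for every $\psi \in L_N$ the form $f_\ast^\vee \psi \in \B^E$ is constant on fibers of $q_M$, equivalently $f_\ast^\vee \psi \in Q_M^\vee = L_M$, which is condition (\ref{condition:L}). The main subtlety throughout is the careful bookkeeping of the distinguished loop $\bullet$: the module map $f_\ast$ depends only on $f|_E$, whereas strong maps are phrased on the augmented ground sets $E \sqcup \bullet$ and $F \sqcup \bullet$, so elements $i \in E$ with $f(i) = \bullet$ must be consistently excluded when passing between the set-theoretic and module-theoretic formulations.
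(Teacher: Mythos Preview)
Your proof is correct, but it takes a different path from the paper's. The paper proves \Ifandonlyif{condition:strong}{condition:L} directly via the cocircuit description of $L_M$ (Theorem \ref{theorem: L-cocycles}): since $f_\ast^\dual(e_k) = \sum_{i\in f^{-1}(k)} e_i$, the containment $f_\ast^\dual(L_N)\subseteq L_M$ says exactly that preimages of unions of cocircuits are unions of cocircuits, which (passing to complements) is the strong-map condition on flats. The equivalence \Ifandonlyif{condition:Q}{condition:L} is then dispatched in one line by applying $\Hom(-,\B)$ to the commutative square. You instead prove \Ifandonlyif{condition:strong}{condition:Q} directly via the lattice-of-flats description of $Q_M$ (Theorem \ref{theorem: Q-is-flats}), reducing commutativity to the closure inequality $(\ast)$ and checking that $(\ast)$ is equivalent to preimages of flats being flats; your \Ifandonlyif{condition:Q}{condition:L} is the same duality argument, just spelled out more explicitly via the double-dual isomorphism. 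The paper's route is shorter because the translation through cocircuits is nearly tautological, whereas your route requires the extra manipulation with closure operators; on the other hand, your argument makes the role of Theorem \ref{theorem: Q-is-flats} more visible and handles the bookkeeping for the distinguished loop $\bullet$ more transparently than the paper's one-sentence appeal to ``cocircuits of $M_\bullet$ are the same as those of $M$.''
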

\begin{proof}
	\Ifandonlyif{condition:strong}{condition:L}: By the definitions of $f_*$ and linear duality, for any $k\in F$ we have \[f_\ast^\dual(e_k) = \sum_{i \in f^{-1}(k)} e_i.\]
	By Theorem \ref{theorem: L-cocycles}, the condition $f_\ast^\dual(L_N) \subseteq L_M$ is then equivalent to the condition that the preimage under $f$ of every union of cocircuits of $N$ is a union of cocircuits of $M$.  
	Since $\bullet$ is a loop in $M_\bullet$,
	the cocircuits of $M_\bullet$ are the same as those of $M$,
	and similarly for $N$.
	The result then follows from the fact that unions of cocircuits are exactly the complements of flats.

	\Ifandonlyif{condition:Q}{condition:L}: This is an immediate consequence of the duality established in Theorem \ref{theorem: algebraic-duality}.  Indeed, the condition $f_\ast^\dual(L_N) \subseteq L_M$ is equivalent to having the  commutative square
	\begin{equation*}
		\xymatrix{
			L_N \ar[r] \ar@{^{(}->}[d] &L_M \ar@{^{(}->}[d]		\\
			\B^F \ar[r]^{f_\ast^\dual}	&	\B^E,					
		}
	\end{equation*}
and the functor $\Hom(-,\B)$ transforms this square into the one in the statement of (2) and vice versa.
\end{proof}

\begin{remark}
We have not found a module-theoretic characterization of weak matroid maps.
\end{remark}

\subsection{Matroid quotients}

Here we specialize to the case that the matroids $M$ and $N$ have the same ground set $E$ and only consider the identity function $\id : E_\bullet \rightarrow E_\bullet$.  In this case a strong map is called a matroid \emph{quotient}, so Proposition \ref{prop: L-Q-strong-maps} specializes to the following (cf., \cite[Lemma 2.21]{shaw13}):

\begin{corollary} \label{cor: L-Q-strong-identity}
	The following are equivalent:
	\begin{enumerate}
		\item $N$ is a matroid quotient of $M$;
		\item the quotient map $\Bdual{E} \twoheadrightarrow Q_N$ factors through the quotient map $\Bdual{E} \twoheadrightarrow Q_M$;
		\item $L_N \subseteq L_M$.
	\end{enumerate}
\end{corollary}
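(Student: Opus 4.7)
The plan is to derive this as a direct specialization of Proposition \ref{prop: L-Q-strong-maps} to the case $E = F$ and $f = \id : E\sqcup\bullet \to E\sqcup\bullet$. By definition, a matroid quotient $N$ of $M$ is precisely a strong map $M \to N$ with underlying function the identity, so condition (1) is the $(\id)$-case of condition (\ref{condition:strong}) in Proposition \ref{prop: L-Q-strong-maps}. It then remains to check that the other two conditions of Proposition \ref{prop: L-Q-strong-maps} specialize to conditions (2) and (3) of the corollary.

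For the equivalence with $L_N \subseteq L_M$: since $f = \id$, the induced map $f_* : \Bdual{E} \to \Bdual{E}$ is the identity, and hence so is its dual $f_*^\vee : \B^E \to \B^E$. Thus condition (\ref{condition:L}) of Proposition \ref{prop: L-Q-strong-maps} reads $L_N = f_*^\vee(L_N) \subseteq L_M$, which is exactly condition (3) here.

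For the equivalence with the factorization condition: the commutative square in (\ref{condition:Q}) of Proposition \ref{prop: L-Q-strong-maps} becomes, with $f_* = \id$, a square
\[\xymatrix{ \Bdual{E} \ar@{->>}[d] \ar[r]^{\id} & \Bdual{E}\ar@{->>}[d] \\ Q_M \ar[r] & Q_{N}. }\]
The existence of such a square is precisely the statement that the quotient map $\Bdual{E} \twoheadrightarrow Q_N$ factors through $\Bdual{E} \twoheadrightarrow Q_M$: a filler $Q_M \to Q_N$ exists (necessarily unique, since $\Bdual{E} \twoheadrightarrow Q_M$ is surjective) if and only if the congruence $\bend{-\wedge [M]}$ defining $Q_M$ is contained in the congruence $\bend{-\wedge [N]}$ defining $Q_N$, which is the factoring statement. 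This matches condition (2).

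Since this is a clean specialization, there is no real obstacle; the only point that warrants any care is verifying that ``factors through'' is really the same data as the existence of the commutative square, but this is a standard observation about quotients of $\B$-modules by congruences. Consequently, the proof is essentially a one-line invocation of Proposition \ref{prop: L-Q-strong-maps}.
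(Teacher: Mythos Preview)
Your proposal is correct and matches the paper's approach exactly: the paper presents this corollary as an immediate specialization of Proposition~\ref{prop: L-Q-strong-maps} to the case $E=F$ and $f=\id$, without even writing out a proof. Your elaboration on why conditions (\ref{condition:Q}) and (\ref{condition:L}) reduce to the factoring statement and the containment $L_N\subseteq L_M$ is accurate and simply makes explicit what the paper leaves implicit.
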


A \emph{flag matroid}, as introduced in \cite{BGW03} (see also \cite{FlagMatroids}), is equivalent to a sequence of matroid quotients \cite[Theorem 1.7.1]{BGW03}, so by this corollary it is also equivalent to a flag (in the usual geometric sense of the term) of constant coefficient tropical linear spaces.

The fundamental relationship between strong maps and minors
is expressed in the factorization theorem for strong maps, which states that every strong map factors as an embedding  followed by a contraction \cite[Theorem 8.2.7]{white86}.  In the special case we are considering here of $\id : E_\bullet \rightarrow E_\bullet$, this takes on the following formulation:

\begin{proposition}[{\cite[Proposition 7.3.6]{oxley11}}]\label{prop:matroidquot}
	$N$ is a matroid quotient of $M$ if and only if there exists a matroid $P$ on $E \sqcup T$ with $|T| = r(M) - r(N)$
	such that $P \backslash T = M$ and $P/T = N$.
\end{proposition}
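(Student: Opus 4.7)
The plan is to prove both directions by leveraging Corollary \ref{cor: L-Q-strong-identity} (translating matroid quotients into inclusions of tropical linear spaces) together with Theorem \ref{theorem: L-minors} (describing tropical linear spaces of minors).

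For the reverse direction, if such a $P$ exists then Theorem \ref{theorem: L-minors} gives $L_N = L_{P/T} = L_P \cap \B^E$ and $L_M = L_{P \setminus T} = \pi_E(L_P)$. The inclusion $L_P \cap \B^E \subseteq \pi_E(L_P)$ is immediate because $\pi_E$ fixes $\B^E$ pointwise, so $L_N \subseteq L_M$, and Corollary \ref{cor: L-Q-strong-identity} concludes that $N$ is a quotient of $M$.

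For the forward direction, assume $N$ is a quotient of $M$ and set $k := r(M) - r(N)$. The plan is to construct $P$ explicitly as a \emph{Higgs lift}: let $T = \{t_1, \ldots, t_k\}$ be disjoint from $E$ and consider the candidate rank function
\[
r_P(A) := \min\bigl(r_M(A \cap E) + |A \cap T|,\; r_N(A \cap E) + k\bigr), \qquad A \subseteq E \sqcup T.
\]
Three things must then be verified: (i) that $r_P$ is the rank function of a matroid $P$; (ii) that $r_P(A) = r_M(A)$ for $A \subseteq E$, so $P \setminus T = M$, which follows from the quotient inequality $r_M(A) - r_N(A) \leq k$ valid for all $A \subseteq E$ (a standard consequence of the strong map condition on the identity map); and (iii) that $r_P(T) = k$ and $r_P(A \cup T) = r_N(A) + k$ for $A \subseteq E$, yielding $r_{P/T}(A) = r_N(A)$ and hence $P/T = N$, both of which follow from $r_N \leq r_M$ on subsets of $E$ (itself a consequence of the quotient condition applied to $\emptyset \subseteq A$).

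The main obstacle is step (i), specifically the submodularity of $r_P$: the minimum of two submodular functions is not automatically submodular, so one must perform a case analysis according to which of the two terms realizes the minimum at each of $A$, $B$, $A \cap B$, and $A \cup B$, invoking the submodularity of $r_M$ and $r_N$ separately together with the quotient inequality in the mixed cases. An alternative, perhaps cleaner, route is induction on $k$, reducing to the single-element Higgs lift where matroidality admits a direct basis-level verification; the inductive step then amounts to iterating the single-element lift. Either way, once $P$ is known to be a matroid, steps (ii) and (iii) are routine rank-function manipulations.
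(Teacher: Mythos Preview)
The paper does not prove this proposition at all; it is quoted verbatim from Oxley's textbook as an external input and then combined with Corollary~\ref{cor: L-Q-strong-identity} and Theorem~\ref{theorem: L-minors} to obtain Corollary~\ref{corollary: linear-space-inclusion}. So there is no ``paper's proof'' to compare against.

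Your reverse direction is correct and, amusingly, is exactly the argument the paper uses \emph{downstream} to derive Corollary~\ref{corollary: linear-space-inclusion} from this proposition; you are simply running that deduction backwards to recover one half of the proposition from the paper's own module-theoretic results. Since Corollary~\ref{cor: L-Q-strong-identity} and Theorem~\ref{theorem: L-minors} are established independently of Proposition~\ref{prop:matroidquot}, there is no circularity.

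Your forward direction via the Higgs construction is the standard textbook route and is essentially how Oxley proves it. One point to sharpen: in the submodularity case analysis, the bound $r_M(A)-r_N(A)\le k$ that you call ``the quotient inequality'' is not what carries the mixed cases. What you actually need is the stronger (and equally standard) characterization that $r_M-r_N$ is \emph{monotone} on $2^E$ when $N$ is a quotient of $M$. Monotonicity of $r_M-r_N$ forces the choice of minimizer to be coherent along inclusions (if $f$ wins at $A\cup B$ it wins at $A$, $B$, $A\cap B$; if $g$ wins at $A\cap B$ it wins everywhere above), which reduces the mixed case to one inequality of the form $r_M(X)-r_M(X\cap Y)\ge r_N(X\cup Y)-r_N(Y)$; this in turn follows from submodularity of $r_M$ together with monotonicity of $r_M-r_N$. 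With that adjustment the sketch is complete.
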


Combining this result with Corollary \ref{cor: L-Q-strong-identity} and Theorem \ref{theorem: L-minors} yields the following:

\begin{corollary} \label{corollary: linear-space-inclusion}
Let $L$ and $L'$ be tropical linear spaces in $\B^E$.  Then $L \subseteq L'$ if and only if there exists a tropical linear space $L''\subseteq \B^{E\sqcup T}$ with $|T| = \dim(L')-\dim(L)$ such that $L= L'' \cap \B^E$ and $L' = \pi_{E}(L'')$.\end{corollary}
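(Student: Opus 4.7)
The plan is to translate the statement, which is phrased purely in terms of constant-coefficient tropical linear spaces, into matroid theory, where it reduces immediately to Proposition \ref{prop:matroidquot}. Essentially all the work has already been done; the corollary is just the dictionary applied to the factorization theorem.

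First I would invoke the bijection between matroids on $E$ and constant-coefficient tropical linear spaces in $\B^E$ (with $\dim L_M = \rk(M)$) to write $L = L_N$ and $L' = L_M$ for matroids $N$ and $M$ on $E$; by Corollary \ref{cor: L-Q-strong-identity}, the hypothesis $L \subseteq L'$ is then equivalent to $N$ being a matroid quotient of $M$.

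For the forward direction, Proposition \ref{prop:matroidquot} supplies a matroid $P$ on $E \sqcup T$ with $|T| = \rk(M) - \rk(N)$, $P \backslash T = M$, and $P/T = N$. Taking $L'' := L_P \subseteq \B^{E \sqcup T}$, Theorem \ref{theorem: L-minors} gives immediately
\[ L'' \cap \B^E = L_P \cap \B^E = L_{P/T} = L_N = L, \qquad \pi_E(L'') = \pi_E(L_P) = L_{P\backslash T} = L_M = L', \]
and the cardinality condition $|T| = \rk(M) - \rk(N)$ is exactly $|T| = \dim L' - \dim L$. For the converse, given $L''$ as in the statement, write $L'' = L_P$ for some matroid $P$ on $E \sqcup T$; Theorem \ref{theorem: L-minors} then identifies $L = L_{P/T}$ and $L' = L_{P \backslash T}$, so by the bijection $N = P/T$ and $M = P \backslash T$. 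Proposition \ref{prop:matroidquot} then gives that $N$ is a matroid quotient of $M$, and applying Corollary \ref{cor: L-Q-strong-identity} once more yields $L \subseteq L'$.

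The main obstacle is almost entirely notational rather than mathematical: one must be careful that the dimension-versus-rank bookkeeping is consistent (which it is, as $\dim L_M = \rk(M)$ follows from the constructions of Section 2), and one must remember that the bijection between tropical linear spaces and matroids on a fixed ground set is what lets us pass seamlessly between $L''$ and the matroid $P$ in the reverse direction. Beyond these matters of translation, the corollary is a one-line concatenation of Theorem \ref{theorem: L-minors}, Corollary \ref{cor: L-Q-strong-identity}, and Proposition \ref{prop:matroidquot}.
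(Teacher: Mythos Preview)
Your proof is correct and follows exactly the approach the paper indicates: the paper simply states that the corollary follows by combining Proposition~\ref{prop:matroidquot} with Corollary~\ref{cor: L-Q-strong-identity} and Theorem~\ref{theorem: L-minors}, and you have carefully unpacked this combination. One minor remark: for the converse direction you could have observed directly that $L'' \cap \B^E \subseteq \pi_E(L'')$ set-theoretically (since any $v \in L'' \cap \B^E$ satisfies $\pi_E(v) = v$), bypassing the appeal to Proposition~\ref{prop:matroidquot}, but your route through the matroid dictionary is equally valid.
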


Here $\B^E\subseteq \B^{E\sqcup T}$ is a coordinate subspace, $\pi_E : \B^{E\sqcup T} \to \B^E$ is coordinate projection, and the dimension of a tropical linear space is the rank of the corresponding matroid.

\begin{remark}
	This is the tropical-linear analogue
	of the following linear-algebraic statement: for $k$ a field,
	if $U \subseteq V \subseteq k^n$ are subspaces and $m = \dim V - \dim U$,
	then there exists a subspace $W \subseteq k^{n+m}$ with coordinate projection onto, and intersection with, $k^n$ equal to $V$ and $U$, respectively. The construction of $W$ here is easier: let $\{u_1, \ldots, u_\ell\}$ be a basis for $U$,
	and extend it by $\{v_1, \ldots, v_m\}$ to a basis for $V$.
	Say $\{e_1,\ldots, e_{n+m}\}$ is a basis for $k^{n+m}$ such that our choice of $k^n$ is spanned by $\{e_1, \ldots, e_n\}$.
	Then we can let $W$ be the span of $\{u_1, \ldots, u_\ell, v_1 + e_{n+1}, v_2 + e_{n+2}, \ldots, v_m + e_{n+m}\}$.
\end{remark}

\begin{remark}
The tropical linear space $L''$ in this corollary is a constant-coefficient tropical modification of $L'$ along $L$ (cf., \cite[\S2.4]{shaw13}).  This $L''$ is not unique.  Indeed, the matroid quotient in Proposition \ref{prop:matroidquot} can always be decomposed into a sequence of corank one quotients \cite[Proposition 7.3.5]{oxley11} and different decompositions correspond to different sequences of tropical modifications.  The relation between different decompositions and the existence of canonical decompositions was an active topic of investigation in the combinatorics community (see \cite{higgs} and the references therein).  On the other hand, over $\T$ the space of tropical modifications leads to the Berkovich analytification \cite{payne}.  Perhaps there is an interesting connection to be made here.
\end{remark}

\subsection{Categories of matroids}

Matroids form a category with strong maps as morphisms.
This category is investigated in \cite{alhawary01} and \cite{heunen17},
the latter of which discusses the existence and nonexistence of limits and colimits.
For instance, the category of matroids with strong maps does not have categorical products.  From the algebraic perspective a natural category to consider instead is the following:
\begin{definition}
Let $\mathcal{C}$ denote the following category:
\begin{itemize}
\item \emph{Objects}: surjections $\Bdual{E} \surj Q_M$ for $M$ a matroid on the ground set $E$;
\item \emph{Morphisms}: commutative squares of $\B$-module homomorphisms
\[\xymatrix{\Bdual{E}  \ar[r] \ar@{->>}[d] &\Bdual{F} \ar@{->>}[d] \\ Q_M \ar[r] & Q_N.}\]
\end{itemize}
Let $\iota(M) := (\Bdual{E}\surj Q_M)$.
\end{definition}

\begin{corollary}
	The map $\iota$ is functorial and gives a faithful, but not full, embedding of the category of matroids with strong maps into $\mathcal{C}$.
\end{corollary}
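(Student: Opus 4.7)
The plan is to verify the three assertions—functoriality, faithfulness, and failure of fullness—in turn. That $\iota$ sends a strong map $f : M \to N$ to a morphism of $\mathcal{C}$ is precisely the equivalence \Ifandonlyif{condition:strong}{condition:Q} of Proposition \ref{prop: L-Q-strong-maps}. Functoriality then reduces to two routine checks on basis vectors: $(\id_E)_* = \id_{\Bdual{E}}$ is immediate from the formula defining $f_*$, and $(g \circ f)_* = g_* \circ f_*$ follows by evaluating both sides on each $x_i$, with the loop case $f(i) = \bullet$ handled by the convention that $f_*(x_i) = 0$ so that both composites vanish there.

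For faithfulness, suppose $f, f' : M \to N$ are distinct strong maps, so there is some $i \in E$ with $f(i) \neq f'(i)$. The images $f_*(x_i)$ and $f'_*(x_i)$ are then distinct elements of $\Bdual{F}$: each is either $0$ or a single basis vector $x_j$, and these possibilities are pairwise distinct. Hence the top rows of the commutative squares $\iota(f)$ and $\iota(f')$ already differ, so $\iota(f) \neq \iota(f')$.

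For failure of fullness, the key point is that the top arrow of a morphism in $\mathcal{C}$ may be any $\B$-module homomorphism $\Bdual{E} \to \Bdual{F}$ subject only to compatibility with the vertical surjections, so it can send a basis vector to a sum of several basis vectors, whereas $f_*$ always sends $x_i$ to exactly one basis vector or to $0$. Concretely, take $M = U_{1,1}$ on $E = \{1\}$ and $N = U_{1,2}$ on $F = \{1,2\}$; both quotient modules collapse to $\B$ (the former has no circuits; the latter has a single circuit $\{1,2\}$ imposing $x_1 \sim x_2 \sim x_1 + x_2$), and the square
\[\xymatrix{\Bdual{E} \ar[r]^{\varphi} \ar@{=}[d] & \Bdual{F} \ar@{->>}[d] \\ \B \ar@{=}[r] & \B}\]
with $\varphi(x_1) := x_1 + x_2$ commutes (both composites send $x_1$ to $1 \in Q_N$), yet $\varphi$ is not of the form $f_*$ for any function $f$ since $x_1 + x_2 \notin \{0, x_1, x_2\}$. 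The only real obstacle is settling on this clean counterexample; the remaining assertions are bookkeeping consequences of the definition of $f_*$ and the circuit presentation of $Q_M$ and $Q_N$.
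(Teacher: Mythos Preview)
Your proof is correct and follows essentially the same route as the paper's, with one omission: the word ``embedding'' here includes injectivity on objects, which the paper handles by citing \cite[Proposition 6.2.1]{giansiracusa17}. You can fill this gap directly from Theorem \ref{theorem: Q-is-flats}, since the congruence defining $Q_M$ recovers the closure operator of $M$ and hence determines $M$ from the quotient presentation. Beyond that, the arguments align---the paper invokes Proposition \ref{prop: L-Q-strong-maps} for functoriality and argues non-fullness abstractly from the observation that maps of the form $f_*$ send each basis vector to a basis vector or zero, while your explicit witness $\varphi(x_1) = x_1 + x_2$ between $\iota(U_{1,1})$ and $\iota(U_{1,2})$ makes that abstract observation concrete.
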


\begin{proof}
That $\iota$ is injective on objects is \cite[Proposition 6.2.1]{giansiracusa17}; that it is functorial is Proposition \ref{prop: L-Q-strong-maps}; that it is injective on Hom sets is clear, since the association $f \mapsto f_*$ from set maps $E \rightarrow F$ to $\B$-module homomorphisms $\Bdual{E} \rightarrow \Bdual{F}$ is injective; that it is not surjective on Hom sets is also clear, since the only $\B$-module homomorphisms $\Bdual{E} \rightarrow \Bdual{F}$ of the form $f_*$ are those that send basis vectors to basis vectors.
\end{proof}

\begin{remark}
By algebraic duality (Theorem \ref{theorem: algebraic-duality}) we also have a contravariant embedding $M \rightsquigarrow (L_M \subseteq \B^E)$ into the category whose objects are embedded tropical linear spaces and morphisms are $\B$-modules homomorphisms of the ambient free modules that restrict to $\B$-module homomorphisms of the embedded tropical linear space submodules.
\end{remark}

While the ``extra'' morphisms included in $\mathcal{C}$ are natural from a module-theoretic perspective, it turns out they are also rather natural from a combinatorial perspective.  Let us consider multivalued set functions $f : E \rightarrow F$ that are allowed to send elements to the empty set (i.e., functions $f : E \rightarrow 2^F$), and for such a multivalued function define the \emph{preimage} of a subset $S \subseteq F$ to be \[f^{-1}(S) := \{ i \in E: f(i) \subseteq S\}.\]

\begin{definition}
A \emph{multivalued strong map} of matroids $M \rightarrow N$ on ground sets $E$ and $F$ is a multivalued function $f : E \to F$ such that the preimage of every flat is a flat.
\end{definition}

\begin{theorem}
The map $\iota$ is a functor defining a fully faithful embedding from the category of matroids with multivalued strong maps into $\mathcal{C}$.
\end{theorem}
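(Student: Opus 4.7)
The approach is to extend Proposition \ref{prop: L-Q-strong-maps} from single-valued functions to multivalued ones, after which functoriality, faithfulness, and fullness all follow quickly from a clean bijection between multivalued functions and $\B$-module homomorphisms on the duals.

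For an arbitrary multivalued function $f : E \to 2^F$, I extend the definition of $f_*$ by setting $f_* : \Bdual{E} \to \Bdual{F}$, $f_*(x_i) := \sum_{k \in f(i)} x_k$ (which is $0$ when $f(i) = \emptyset$). The first observation is that $f \mapsto f_*$ is a \emph{bijection} onto the set of all $\B$-module homomorphisms $\Bdual{E} \to \Bdual{F}$: any such homomorphism is determined by its values on the basis $\{x_i\}$, and since $\Bdual{F}$ is a free $\B$-module each value is uniquely a sum of distinct basis elements $x_k$, from which we recover $f(i) = \{k : x_k \text{ appears in } f_*(x_i)\}$.

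The heart of the proof is the multivalued analogue of Proposition \ref{prop: L-Q-strong-maps}: for any multivalued $f$, the conditions ``$f$ is a multivalued strong map $M \to N$'', ``$f_*$ descends to $Q_M \to Q_N$'', and ``$f_*^\vee(L_N) \subseteq L_M$'' are equivalent. The latter two are equivalent via Theorem \ref{theorem: algebraic-duality}, exactly as in Proposition \ref{prop: L-Q-strong-maps}. For the matroidal direction, direct computation gives $f_*^\vee(e_k) = \sum_{i : k \in f(i)} e_i$, so that for any cocircuit $K$ of $N$,
\[ f_*^\vee(g_K) = \sum_{i : f(i) \cap K \neq \emptyset} e_i, \]
which is the indicator vector of $E - f^{-1}(F-K)$. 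By Theorem \ref{theorem: L-cocycles}, the inclusion $f_*^\vee(L_N) \subseteq L_M$ is then equivalent to $f^{-1}(H)$ being a flat of $M$ for every hyperplane $H$ of $N$. Since every flat of $N$ is an intersection of hyperplanes, and $f^{-1}(S_1 \cap S_2) = f^{-1}(S_1) \cap f^{-1}(S_2)$ is immediate from the defining formula $f^{-1}(S) = \{i : f(i) \subseteq S\}$, this is equivalent to $f$ being a multivalued strong map.

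The theorem then falls out. Functoriality uses the natural composition $(g \circ f)(i) := \bigcup_{k \in f(i)} g(k)$, which satisfies $(g \circ f)^{-1} = f^{-1} \circ g^{-1}$ (so composition of multivalued strong maps is again one) and $(g \circ f)_* = g_* \circ f_*$ by direct calculation. Faithfulness is immediate from the bijection above: $\iota(f) = \iota(g)$ forces $f_* = g_*$, whence $f = g$. For fullness, any morphism in $\mathcal{C}$ has top map $\phi : \Bdual{E} \to \Bdual{F}$; write $\phi = f_*$ for a unique multivalued $f$ via the bijection, and observe that the commutativity of the square is precisely the condition that $\phi$ descend, so by the key equivalence, $f$ is a multivalued strong map realizing the given morphism. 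The main subtlety I expect is the matroidal half of the key equivalence, specifically the reduction from preimages of all flats to preimages of hyperplanes via the interaction of $f^{-1}$ with intersections.
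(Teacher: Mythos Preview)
Your proof is correct and follows essentially the same approach as the paper: the bijection between multivalued functions and $\B$-module homomorphisms, followed by the computation that $f_*^\vee$ of a cocircuit indicator has support $E - f^{-1}(F-K)$, translating the module condition into the flat-preimage condition. The only cosmetic difference is that the paper checks the condition on arbitrary elements of $L_N$ (unions of cocircuits, hence all flats at once), whereas you check it on generators (cocircuits, hence hyperplanes) and then extend to all flats via $f^{-1}(S_1\cap S_2)=f^{-1}(S_1)\cap f^{-1}(S_2)$; both are valid and equally short.
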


\begin{proof}
There is a natural bijection between multivalued set maps $f : E \to F$ and $\B$-module homomorphisms $\varphi : \Bdual{E} \to \Bdual{F}$.  Indeed, given $\varphi$ we let $f(i) = \supp \,\varphi(x_i)$, and conversely given $f$ we let $\varphi(x_i) = \sum_{j \in f(i)} x_j$.  To show that $f$ is a multivalued strong map $M \to N$ if and only if $\varphi$ is a morphism in our category, it suffices by algebraic duality to show that $f$ is a multivalued strong map if and only if $\varphi^\dual(L_N) \subseteq L_M$.  Now, for an arbitrary subset $K \subseteq F$, we have
	\begin{equation*}
		\varphi^\dual\left( \sum_{i \in K} e_i\right) = \sum_{i \in K} \sum_{\substack{j \\f(j) \ni i}} e_j,
	\end{equation*}
	that is, 
	\begin{equation*}
		\supp\, \varphi^\dual\left( \sum_{i \in K}e_i\right) = \{j \in E: f(j) \cap K\neq \varnothing\}.
	\end{equation*}
	Hence, 
	\begin{equation*}
		E - \supp\, \varphi^\dual\left(\sum_{i \in K}e_i\right) = \{j \in E: f(j) \subseteq F-K\} = f^{-1}(F-K).
	\end{equation*}
	Since the supports of elements of $L_M$ and $L_N$ are exactly the complements of flats,
	we see that $\varphi^\dual(L_N) \subseteq L_M$ if and only if the preimage under $f$ of a flat is a flat.
\end{proof}

One advantage of embedding matroids in our category $\mathcal{C}$ of quotient modules is that more limits exist.  For instance, we have finite products and coproducts in $\mathcal{C}$, and they coincide, because the same is true of the category of $\B$-modules.  The finite (co)product is the direct sum.  Explicitly, if $M$ and $N$ are matroids on ground sets $E$ and $F$, then the direct sum of $\iota(M)$ and $\iota(N)$ in $\mathcal{C}$ is the quotient $\Bdual{E}\oplus\Bdual{F} \twoheadrightarrow Q_M\oplus Q_N$.  Fortunately this categorical use of the term is compatible with the matroidal one---by definition, the \emph{direct sum} matroid $M\oplus N$ is the matroid on $E\sqcup F$ whose bases are the unions of a basis of $M$ with a basis of $N$, and we have:

\begin{proposition}
The functor $\iota$ commutes with direct sums: $\iota(M)\oplus \iota(N) \cong \iota(M\oplus N)$.
\end{proposition}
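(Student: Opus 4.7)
The plan is to identify $Q_{M \oplus N}$ with $Q_M \oplus Q_N$ under the canonical decomposition $\Bdual{E \sqcup F} \cong \Bdual{E} \oplus \Bdual{F}$, and then check this identification is compatible with the two quotient maps so that it defines an isomorphism in $\mathcal{C}$.

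The cleanest route is via Lemma \ref{lemma: circuits}. The congruence defining $Q_{M \oplus N}$ is generated by the bend relations $\sum_{i \in C - j} x_i \sim \sum_{i \in C} x_i$ as $C$ ranges over circuits of $M \oplus N$ and $j \in C$. A standard matroid fact---immediate from the basis description of $M \oplus N$ given in the statement---is that the circuits of $M \oplus N$ are exactly the circuits of $M$ (supported on $E$) together with the circuits of $N$ (supported on $F$); no circuit of $M \oplus N$ meets both ground sets. Thus the generating bend relations partition into those entirely supported in $\Bdual{E}$ (which generate the congruence defining $Q_M$) and those entirely supported in $\Bdual{F}$ (which generate the congruence defining $Q_N$). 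Since the congruence on a direct sum of free $\B$-modules generated by relations living separately in each summand is precisely the direct sum of the two congruences, we obtain $Q_{M \oplus N} \cong Q_M \oplus Q_N$. An equivalent approach uses Theorem \ref{theorem: Q-is-flats} and the classical fact that flats of $M \oplus N$ are precisely the disjoint unions of a flat of $M$ with a flat of $N$, giving $\mathcal{L}(M \oplus N) \cong \mathcal{L}(M) \times \mathcal{L}(N)$ as lattices, hence as $\B$-modules.

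Finally, compatibility with the quotient maps is immediate on dual basis vectors: for $i \in E$, both the map $\Bdual{E \sqcup F} \twoheadrightarrow Q_{M \oplus N}$ and the composition $\Bdual{E \sqcup F} \cong \Bdual{E} \oplus \Bdual{F} \twoheadrightarrow Q_M \oplus Q_N$ send $x_i$ to the image of $x_i$ in the $Q_M$ summand, and symmetrically for $i \in F$. This furnishes the required isomorphism in $\mathcal{C}$. The main obstacle, such as it is, is the combinatorial splitting of circuits (or equivalently of flats) along the direct sum decomposition; once that is in hand, the module-theoretic argument is entirely formal.
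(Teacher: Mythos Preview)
Your proof is correct and essentially matches the paper's. The paper uses precisely your ``equivalent approach'' via Theorem \ref{theorem: Q-is-flats} and the fact that flats of $M\oplus N$ are unions of a flat of $M$ with a flat of $N$; your primary route via the circuit description of the congruence is a minor variant that arrives at the same place.
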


\begin{proof}
We have an isomorphism $Q_M\oplus Q_N \cong Q_{M\oplus N}$ by Theorem \ref{theorem: Q-is-flats} because the flats of $M\oplus N$ are the unions of a flat of $M$ with a flat of $N$ \cite[4.2.16]{oxley11}, and this fits into the square
\[\xymatrix{\Bdual{E}\oplus\Bdual{F}  \ar[r]^-\sim \ar@{->>}[d] &\Bdual{E\sqcup F} \ar@{->>}[d] \\ Q_M\oplus Q_N \ar[r]^-\sim & Q_{M\oplus N}}\]
which clearly commutes.
\end{proof}

Given a matroid $M$ on the ground set $E$, there is an equivalence relation on $E$ where $a\sim b$ if and only if $a=b$ or $\{a,b\}$ is contained in a circuit.  The equivalence classes $T_1,\ldots, T_m$ are the \emph{connected components} of $M$, and $M = M|T_1\oplus \cdots \oplus M|T_m$ \cite[Corollary 4.2.9]{oxley11}.

\begin{corollary}
The matroid $M$ is connected if and only if $\iota(M)$ is indecomposable in $\mathcal{C}$.
\end{corollary}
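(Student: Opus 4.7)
The plan is to prove both directions using the preceding proposition that $\iota$ commutes with direct sums. The easy direction is the converse: if $M$ is not connected, then decomposing $E$ into connected components yields $M = M|T_1 \oplus M|T_2$ with both $T_1$ and $T_2$ nonempty (take $T_1$ to be a single connected component and $T_2$ its complement). The preceding proposition gives $\iota(M) \cong \iota(M|T_1) \oplus \iota(M|T_2)$, and neither summand is the zero object of $\mathcal{C}$ since both ground sets are nonempty, witnessing that $\iota(M)$ is decomposable.

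For the forward direction, argue the contrapositive: suppose $\iota(M) \cong A \oplus B$ in $\mathcal{C}$ with neither summand zero. By the definition of the objects of $\mathcal{C}$, $A = \iota(N_1)$ and $B = \iota(N_2)$ for matroids $N_1, N_2$ on nonempty ground sets $F_1, F_2$. Applying the preceding proposition again, $A \oplus B \cong \iota(N_1 \oplus N_2)$, so $\iota(M) \cong \iota(N_1 \oplus N_2)$ in $\mathcal{C}$. This isomorphism provides, in particular, a $\B$-linear isomorphism $(\B^E)^\vee \cong (\B^{F_1 \sqcup F_2})^\vee$ of the top modules; by the uniqueness (up to permutation) of the standard basis of a free $\B$-module, this must be induced by a bijection $\sigma : E \to F_1 \sqcup F_2$. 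Commutativity of the defining square, together with injectivity of $\iota$ on objects \cite[Proposition 6.2.1]{giansiracusa17}, forces $M$ to coincide with $N_1 \oplus N_2$ after relabeling along $\sigma$, exhibiting $M$ as a nontrivial matroidal direct sum, hence disconnected.

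The main obstacle I anticipate is this last step: promoting a categorical isomorphism in $\mathcal{C}$ to an honest identification of matroids. The essential inputs there are the uniqueness of the standard basis of a free $\B$-module, which forces any isomorphism of top modules to come from a bijection of ground sets, together with the injectivity of $\iota$ on objects recalled above, which lets one recover the matroid structure from its quotient presentation. All remaining ingredients---the preceding proposition on direct sums, the explicit description of the objects of $\mathcal{C}$, and the matroidal identity $M = \bigoplus_i M|T_i$ over the connected components---are already available from the preceding material.
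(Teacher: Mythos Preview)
Your argument is correct and is precisely the natural unpacking of what the paper leaves implicit: the corollary is stated without proof, immediately after the proposition that $\iota$ commutes with direct sums and the reminder that $M$ decomposes over its connected components, so the intended reasoning is exactly your two directions. The only point worth tightening is the last step: rather than invoking ``injectivity of $\iota$ on objects'' abstractly, note that once the top isomorphism $(\B^E)^\vee \to (\B^{F_1\sqcup F_2})^\vee$ is induced by a bijection $\sigma$, transporting $N_1\oplus N_2$ along $\sigma^{-1}$ gives a matroid on $E$ whose quotient presentation of $(\B^E)^\vee$ coincides with that of $M$ (this is what commutativity of the square says), and then \cite[Proposition~6.2.1]{giansiracusa17} yields $M = \sigma^{-1}(N_1\oplus N_2)$.
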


Another advantage of embedding matroids in $\mathcal{C}$ is that the set of maps between two matroids inherits the structure of a $\B$-module.  For instance:

\begin{proposition}
	The free matroid $U_{1,1}$ represents the tropical linear space of cocycles:
	\begin{equation*}
		\Hom_\mathcal{C}\left(\iota(M), \iota(U_{1,1})\right) = L_M.
	\end{equation*}
\end{proposition}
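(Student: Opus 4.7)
The plan is to unpack the Hom-set on the left using algebraic duality (Theorem \ref{theorem: algebraic-duality}) and recognize it as exactly $L_M$.

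First I would pin down $\iota(U_{1,1})$. The free matroid $U_{1,1}$ has ground set $\{*\}$; its only flats are $\varnothing \subsetneq \{*\}$, and its unique cocircuit is $\{*\}$. By Theorem \ref{theorem: Q-is-flats} and Theorem \ref{theorem: L-cocycles} we then have $Q_{U_{1,1}} \cong \B$ and $L_{U_{1,1}} \cong \B$, and moreover both of the structural maps
\[\Bdual{\{*\}} \twoheadrightarrow Q_{U_{1,1}}, \qquad L_{U_{1,1}} \hookrightarrow \B^{\{*\}}\]
are isomorphisms.

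Next I would analyze a morphism $\iota(M) \to \iota(U_{1,1})$ in $\mathcal{C}$, namely a commutative square
\[\xymatrix{\Bdual{E} \ar[r]^\varphi \ar@{->>}[d] & \Bdual{\{*\}} \ar[d]^\sim \\ Q_M \ar[r] & Q_{U_{1,1}}.}\]
Because the right-hand vertical is an isomorphism, such a square exists, uniquely completing the data of $\varphi$, if and only if $\varphi$ factors through the quotient $\Bdual{E}\twoheadrightarrow Q_M$. This is the same descent condition that appears in Proposition \ref{prop: L-Q-strong-maps}, and the same argument used there, together with the reflexivity of free $\B$-modules and the injectivity of $\B$ as in the proof of Theorem \ref{theorem: algebraic-duality}, shows that $\varphi$ descends if and only if $\varphi^\vee(L_{U_{1,1}})\subseteq L_M$, i.e., $\varphi^\vee(\B^{\{*\}})\subseteq L_M$.

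Finally I would identify $\varphi$ with an element of $\B^E$. Any $\B$-module map $\varphi: \Bdual{E} \to \B$ is determined by the scalars $\varphi(x_i) \in \B$, and under the canonical reflexivity isomorphism $(\Bdual{E})^\vee \cong \B^E$ it corresponds to $v_\varphi := \sum_i \varphi(x_i)\, e_i$. A direct unwinding of the definition of the linear dual shows $\varphi^\vee(e_*) = v_\varphi$, so the condition $\varphi^\vee(\B^{\{*\}})\subseteq L_M$ is exactly the condition $v_\varphi \in L_M$. This yields a bijection $\Hom_\mathcal{C}(\iota(M),\iota(U_{1,1})) \leftrightarrow L_M$; since addition of morphisms in $\mathcal{C}$ is pointwise, it is $\B$-linear. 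The main (mild) obstacle is just bookkeeping the identifications $\Bdual{\{*\}} = \B$, $\B^{\{*\}} = \B$, and $(\Bdual{E})^\vee = \B^E$ consistently; no new idea is needed beyond the algebraic duality already developed.
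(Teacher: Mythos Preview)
Your argument is correct and is essentially the paper's approach: both identify $\iota(U_{1,1})$ with the identity presentation $\B^\vee \to \B^\vee$ and then use algebraic duality. The paper is slightly slicker in that, rather than parameterizing by the top arrow $\varphi$ and asking when it descends, it observes directly that (since the right vertical is an isomorphism and the left vertical is surjective) such squares are in natural bijection with the bottom arrows $Q_M \to \B$, i.e.\ with elements of $Q_M^\vee = L_M$; your detour through Proposition~\ref{prop: L-Q-strong-maps} and the condition $\varphi^\vee(L_{U_{1,1}})\subseteq L_M$ just unpacks this same identification.
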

\begin{proof}
	Since $\iota(U_{1,1})$ is the identity quotient presentation $\B^\dual \to \B^\dual$,
	the isomorphism $\B^\dual \cong \B$ shows that commutative squares from $\Bdual{E} \surj Q_M$ to $\B^\dual \to \B^\dual$
	are naturally in bijection with morphisms $Q_M \to \B$.
	But these are the elements of $Q_M^\dual = L_M$.
\end{proof}

\section{Matroid union and stable sum}

Given matroids $M$ and $N$ on the ground set $E$, the \emph{matroid union} $M\vee N$ is the matroid whose independent sets are the unions of an independent set of $M$ with an independent set of $N$.  When the ground sets of $M$ and $N$ are disjoint, this is just the matroid direct sum: $M\vee N = M \oplus N$.  A much milder disjointedness condition that is germane is the following:

\begin{definition}
We say that $M$ and $N$ are \emph{sufficiently disjoint} if there exists a basis $B_M$ of $M$ and a basis $B_N$ of $N$ such that $B_M\cap B_N = \varnothing$.
\end{definition}

It is immediate from the definitions that $\rk(M\vee N) \le \rk M + \rk N$ with equality if and only if $M$ and $N$ are sufficiently disjoint.  In the exterior algebra $\bigwedge \B^E$ we have $[M]\wedge [N] \ne 0$ if and only if $M$ and $N$ are sufficiently disjoint, and in that case \cite[Proposition 5.1.1]{giansiracusa17}:
\[[M]\wedge [N] = [M \vee N] \in \bigwedge^{\rk M + \rk N}\B^E.\]

When $M$ and $N$ are sufficiently disjoint, the tropical linear space $L_{M\vee N} \subset \B^E$ is the \emph{stable sum} of $L_M$ and $L_N$, which we denote $L_M+^{st} L_N$.  This operation is studied in \cite{fink15} (see also \cite[Proposition 4.1.14]{frenk13} and \cite{murota97}).  It is tropically dual to the stable intersection of tropical linear spaces that Speyer introduced in \cite{speyer08} for valuated matroids supported on uniform matroids.  

\begin{remark}
	We can extend the exterior algebra formalism of the matroid union to the not-necessarily sufficiently disjoint setting as follow.  For a matroid $M$, let $T_i(M)$ denote the rank $i$ \emph{truncation}, which is the matroid whose independent sets are the independent sets of $M$ of size at most $i$ (cf., \cite[Proposition 7.3.10]{oxley11}).
	Then 
	\begin{equation*}
		\sum_{i \ge 0} [T_i(M)] \in \bigwedge \B^E
	\end{equation*}
	is the indicator vector of all the independent sets of $M$. 	It is then straightforward to see that 
	\begin{equation*}
		\left( \sum_{i \ge 0} [T_i(M)] \right) \wedge \left(\sum_{i \ge 0} [T_i(N)]\right) \in \bigwedge\B^E
	\end{equation*}
	is the indicator vector of all independent sets of the matroid union $M \vee N$.
\end{remark}

Since it is convenient to be able to phrase statements about the stable sum in geometric terms, without having to translate back to matroids, let us introduce the following terminology:
\begin{definition}
We say that two tropical linear spaces in $\B^E$ are \emph{sufficiently moveable} if the corresponding matroids on $E$ are sufficiently disjoint.
\end{definition}
The reason for this wording is that when matroids $M$ and $N$ are sufficiently disjoint, the tropical linear spaces $L_M\otimes \T$ and $L_N\otimes \T$ in $\T^E = (\R\cup\{-\infty\})^E$ can be perturbed so that they meet transversely and then the stable sum is the limit of the spans as the perturbation goes to zero \cite{speyer08,fink15}---but when $M$ and $N$ are not sufficiently disjoint the tropical linear spaces intersect at $-\infty$ too much to be able to perturb them to obtain transversality.

The following lemma illustrates part of how stable sum interacts with minors and strong maps. The hypotheses on the spaces involved are asymmetrical: if $T$ is contracted or deleted,
then the lemma deals with taking the stable sum with a tropical linear space supported on the complement of $T$.
This is necessary since, e.g., even if $L_{M}$ and $L_{N}$ are sufficiently moveable, we have \[([M] \wedge e_t)\wedge ([N]\wedge e_t) = [M] \wedge [N] \wedge e_t \wedge e_t = 0,\]
so $L_{[M]\wedge e_t}$ and $L_{[N]\wedge e_t}$ are \emph{not} sufficiently moveable.  A word on notation: when taking stable sum of tropical linear spaces in different ambient spaces, we implicitly rely on the natural coordinate inclusion maps.

\begin{lemma} \label{lemma: minor-stable-sum}
	Given sets $E$ and $T$, let $M$ be a matroid on $E$ and $N$ a matroid on $E\sqcup T$. Denote by $\pi_E: \B^{E\sqcup T} \to \B^E$ the coordinate projection map and $\B^E \subseteq \B^{E\sqcup T}$ the coordinate subspace.
	\begin{enumerate}[(a)]
		\item If $L_M$ and $L_N \cap \B^E$ are sufficiently moveable, then so are $L_M$ and $L_N$ and we have
			\begin{equation*}
				L_M \stsum (L_N \cap \B^E) = (L_M \stsum L_N) \cap \B^E.
			\end{equation*}
		\item If $L_M$ and $\pi_E(L_N)$ are sufficiently moveable, then so are $L_M$ and $L_N$ and we have 
			\begin{equation*}
				L_M \stsum \pi_E(L_N) = \pi_E(L_M \stsum L_N).
			\end{equation*}
	\end{enumerate}
\end{lemma}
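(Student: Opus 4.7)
The strategy is to reduce both parts to a matroidal identity relating matroid union with minors, then translate back to tropical linear spaces via Theorem~\ref{theorem: L-minors} together with the identification $L_{M_1 \vee M_2} = L_{M_1} \stsum L_{M_2}$ that holds whenever $M_1$ and $M_2$ are sufficiently disjoint. Throughout, $M$ is implicitly extended to a matroid $\widetilde{M}$ on $E \sqcup T$ by declaring each element of $T$ to be a loop; since cocircuits avoid loops, Theorem~\ref{theorem: L-cocycles} identifies $L_{\widetilde{M}}$ with $L_M$ inside $\B^E \subseteq \B^{E \sqcup T}$, matching the convention on coordinate inclusions stated before the lemma. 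A basis of $\widetilde M$ is the same as a basis of $M$, so sufficient disjointness of $\widetilde M$ and $N$ is the same as sufficient moveability of $L_M$ and $L_N$.

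\textbf{Part (a).} The underlying matroidal identity is $(\widetilde M \vee N)/T = M \vee (N/T)$ as matroids on $E$. I would verify this directly from the matroid union rank formula $r_{M_1 \vee M_2}(A) = \min_{Y \subseteq A}(|A \setminus Y| + r_{M_1}(Y) + r_{M_2}(Y))$. For $A \subseteq E$, every $X \subseteq A \cup T$ splits uniquely as $Y \sqcup Z$ with $Y \subseteq A$ and $Z \subseteq T$; since $T$ consists of $\widetilde M$-loops, $r_{\widetilde M}(Y \cup Z) = r_M(Y)$, and by monotonicity of $r_N$ the function $Z \mapsto |T \setminus Z| + r_N(Y \cup Z)$ is minimized at $Z = T$. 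These two observations collapse $r_{(\widetilde M \vee N)/T}(A) = r_{\widetilde M \vee N}(A \cup T) - r_{\widetilde M \vee N}(T)$ to $\min_{Y \subseteq A}(|A \setminus Y| + r_M(Y) + r_N(Y \cup T)) - r_N(T)$, which coincides with $r_{M \vee (N/T)}(A)$. Sufficient moveability transfers easily: given disjoint bases $B_M \subseteq E$ of $M$ and $B_{N/T} \subseteq E$ of $N/T$, the set $B_{N/T} \cup B_T$ is a basis of $N$ for any basis $B_T$ of $N|T$ by \cite[Corollary 3.1.8]{oxley11}, and remains disjoint from $B_M$ since $B_M \subseteq E$ and $B_T \subseteq T$. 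The identity, combined with Theorem~\ref{theorem: L-minors}(a) applied both to $N$ and to $\widetilde M \vee N$, together with the stable-sum identification, yields the chain
\[L_M \stsum (L_N \cap \B^E) = L_{M \vee (N/T)} = L_{(\widetilde M \vee N)/T} = L_{\widetilde M \vee N} \cap \B^E = (L_M \stsum L_N) \cap \B^E.\]

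\textbf{Part (b) and main obstacle.} The analogous identity for (b) is $(\widetilde M \vee N) \backslash T = M \vee (N \backslash T)$, which is immediate from the definition of independent sets of a matroid union: any $I \subseteq E$ that is independent in $\widetilde M \vee N$ decomposes as $I_{\widetilde M} \cup I_N$, and since $I \subseteq E$ and $T$ is loops in $\widetilde M$ we may take $I_{\widetilde M}, I_N \subseteq E$, making $I_N$ independent in $N \backslash T$; the converse is direct. Sufficient moveability transfers since any basis of $N \backslash T$ extends to a basis of $N$ by adjoining elements of $T$, preserving disjointness with $B_M \subseteq E$. The conclusion then follows by combining this identity with Theorem~\ref{theorem: L-minors}(b) and the stable-sum identification, just as in (a). The main obstacle is the identity in (a): the symmetric statement $(M_1 \vee M_2)/T = (M_1/T) \vee (M_2/T)$ for arbitrary matroids sharing $T$ in their ground sets is \emph{false} in general (e.g.\ $U_{1,2} \vee U_{1,2}$ on $\{a,b\}$ contracted at $a$ gives $U_{1,1}$, whereas $U_{0,1} \vee U_{0,1}$ is a loop), so the hypothesis that $T$ is all $\widetilde M$-loops is essential in the rank calculation and accounts for the asymmetry built into the statement of the lemma.
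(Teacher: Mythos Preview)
Your proof is correct but takes a different route from the paper's. Both arguments ultimately rest on the matroidal identities $(\widetilde M\vee N)/T = M\vee(N/T)$ and $(\widetilde M\vee N)\backslash T = M\vee(N\backslash T)$, but they reach them by different means. The paper proceeds by induction on $|T|$, reducing to the singleton case $T=\{t\}$, and then works entirely in the exterior algebra: Proposition~\ref{prop: one-minor-plucker} supplies a decomposition $[N] = [N']\wedge e_t + [N'']$ with $[N'] = [N/t]$ when nonzero; wedging with $[M]$ gives $[M]\wedge[N] = ([M]\wedge[N'])\wedge e_t + [M]\wedge[N'']$, and applying Proposition~\ref{prop: one-minor-plucker} once more to this new decomposition reads off the contraction (resp.\ deletion) of $M\vee N$ at $t$. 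You instead establish the identities directly and for all of $T$ at once---the first via the matroid-union rank formula, the second from the definition of independent sets---and only afterwards translate to tropical linear spaces via Theorem~\ref{theorem: L-minors}. Your argument is more elementary, avoids induction, and makes transparent why the hypothesis that $T$ consists of loops in $\widetilde M$ is essential (your counterexample for the symmetric statement is exactly the point); the paper's argument is shorter once the exterior-algebra machinery is in place and keeps the proof inside the formalism the paper is promoting.
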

\begin{proof}
For (a), first note that by induction on $|T|$ it suffices to consider the case that $T = \{t\}$ is a singleton. By Proposition \ref{prop: one-minor-plucker} and Theorem \ref{theorem: L-minors}, there exist elements $[N'], [N''] \in \bigwedge \B^E$ that are either 0 or basis indicator vectors for matroids on $E$ such that $[N] = [N'] \wedge e_t + [N'']$ and if $[N'] \ne 0$ then $L_{N'} = L_N \cap \B^E$ and hence $[M] \wedge [N'] \neq 0$ by hypothesis.  In this case, 
\begin{equation}\label{eq:MN}
[M] \wedge [N] = ([M] \wedge [N']) \wedge e_t + ([M] \wedge [N'']) \neq 0
\end{equation}
since addition is idempotent and $[M] \wedge [N'] \in\bigwedge \B^E$.  Thus $L_M$ and $L_N$ are sufficiently moveable, and by applying Proposition \ref{prop: one-minor-plucker} and Theorem \ref{theorem: L-minors} to the expression in \eqref{eq:MN}, and using that $[M]\wedge [N'] \ne 0$, we get 
\[L_M\stsum (L_N\cap \B^E) = L_M\stsum L_{N'} = L_{[M]\wedge [N']} = L_{[M]\wedge [N]} \cap \B^E = (L_M \stsum L_N) \cap \B^E,\] as desired.  In the case $[N'] = 0$, we have that $N=N''$ is a matroid on $E$ so $L_N \subseteq \B^E$ and the assertion in (a) is trivial.  The proof of (b) is nearly identical.
\end{proof}

We can use this lemma to prove the following monotonicity property for the stable sum:  

\begin{theorem}
Suppose that $M,N,P$ are matroids on $E$ with $L_M \subseteq L_N$.  If $L_P$ and $L_N$ are sufficiently moveable, then so are $L_P$ and $L_M$, and we have $L_P \stsum L_M \subseteq L_P \stsum L_N$.
\end{theorem}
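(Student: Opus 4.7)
The plan is to use the factorization theorem for matroid quotients (Proposition \ref{prop:matroidquot}) to lift the problem to $\B^{E\sqcup T}$, apply Lemma \ref{lemma: minor-stable-sum}, and descend. By Corollary \ref{cor: L-Q-strong-identity}, the inclusion $L_M \subseteq L_N$ means $M$ is a matroid quotient of $N$, so Proposition \ref{prop:matroidquot} yields a matroid $Q$ on $E \sqcup T$ with $|T| = \rk(N) - \rk(M)$, $Q \setminus T = N$, and $Q/T = M$. Rank bookkeeping in the Higgs lift forces $T$ to be independent in $Q$ and $\rk(Q) = \rk(N)$; Theorem \ref{theorem: L-minors} then gives $L_N = \pi_E(L_Q)$ and $L_M = L_Q \cap \B^E$. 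Applying Lemma \ref{lemma: minor-stable-sum}(b) to the sufficiently moveable pair $L_P$, $\pi_E(L_Q) = L_N$ shows that $L_P$ and $L_Q$ are sufficiently moveable in $\B^{E \sqcup T}$ and that $L_P \stsum L_N = \pi_E(L_{P \vee Q})$.

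The heart of the argument is the matroidal identity $(P \vee Q)/T = P \vee M$ as matroids on $E$, where $P$ is viewed on $E \sqcup T$ by adjoining $T$ as loops. I would prove this by direct basis comparison. Since $T$ is loops in $P$ and independent in $Q$, an easy rank computation shows $(P \vee Q)|T$ is the free matroid on $T$, so $B \subseteq E$ is a basis of $(P \vee Q)/T$ iff $B \cup T$ is a basis of $P \vee Q$. Writing $B \cup T = I_P \sqcup I_Q$ with $I_P$ independent in $P$ and $I_Q$ independent in $Q$, the size and loop constraints force $I_P \subseteq B$ to be a basis of $P$ and $I_Q \supseteq T$ with $I_M := B - I_P = I_Q - T$ a basis of $Q/T = M$; this identifies bases of $(P \vee Q)/T$ with disjoint unions of a basis of $P$ and a basis of $M$, which simultaneously forces $P, M$ to be sufficiently disjoint and matches the basis description of $P \vee M$.

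Once the identity is in hand, the conclusion follows: the rank computation $\rk(P \vee M) = \rk(P \vee Q) - |T| = \rk(P) + \rk(N) - (\rk(N) - \rk(M)) = \rk(P) + \rk(M)$ confirms sufficient moveability of $L_P, L_M$, and Theorem \ref{theorem: L-minors} delivers
\[ L_P \stsum L_M \;=\; L_{P \vee M} \;=\; L_{(P \vee Q)/T} \;=\; L_{P \vee Q} \cap \B^E \;\subseteq\; \pi_E(L_{P \vee Q}) \;=\; L_P \stsum L_N, \]
the final inclusion being the elementary observation that $L \cap \B^E \subseteq \pi_E(L)$ for any submodule $L \subseteq \B^{E\sqcup T}$. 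The main obstacle is the matroid identity itself—union-with-contraction distributivity is generally delicate, and the argument relies essentially on $T$ being loops in one summand and independent in the other.
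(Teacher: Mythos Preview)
Your proof is correct and follows the same overall architecture as the paper: lift to $\B^{E\sqcup T}$ via the factorization theorem, apply Lemma~\ref{lemma: minor-stable-sum}, and descend. The differences are in the execution of two steps. First, the paper establishes sufficient moveability of $L_P$ and $L_M$ \emph{before} lifting, by a short cocircuit argument (since every cocircuit of $M$ is a union of cocircuits of $N$, any basis $B_N$ of $N$ disjoint from $B_P$ contains a basis of $M$); you instead extract it as a byproduct of your basis description of $(P\vee Q)/T$. Second, the paper applies both parts of Lemma~\ref{lemma: minor-stable-sum} as black boxes to obtain $L_P\stsum L_M = (L_P\stsum L_Q)\cap\B^E$ and $L_P\stsum L_N = \pi_E(L_P\stsum L_Q)$, then invokes the (trivial) reverse direction of Corollary~\ref{corollary: linear-space-inclusion}; you invoke only part~(b) and replace part~(a) with your direct matroidal identity $(P\vee Q)/T = P\vee M$, which is exactly the combinatorial content of Lemma~\ref{lemma: minor-stable-sum}(a) specialized to this situation. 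Your route is slightly more self-contained (no separate cocircuit argument needed) at the cost of reproving a special case of a lemma already in hand; the paper's route is more modular but needs the preliminary moveability step to meet the hypothesis of Lemma~\ref{lemma: minor-stable-sum}(a).
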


We should first note that this stable sum monotonicity holds more generally; indeed, it is a combinatorial exercise to show that if $M' \rightarrow M$ and $N' \rightarrow N$ are arbitrary matroid quotients, then $M\vee N \rightarrow M'\vee N'$ is also a matroid quotient \cite[Exercise 8.7]{white86}.  A purely geometric proof is also possible: for tropical linear spaces in $\R^n$ (i.e., valuated matroids supported on the uniform matroid) monotonicity follows immediately from tropical duality and Speyer's interpretation of the stable intersection as a limit of perturbations \cite{speyer08}, and Speyer's framework should naturally extend to the generality involved in the above theorem.  Here we provide a module-theoretic argument.  

\begin{proof}
The hypothesis that $L_P$ and $L_N$ are sufficiently moveable means there exists a basis $B_P$ of $P$ and a basis $B_N$ of $N$ such that $B_P \cap B_N = \varnothing$.  By Theorem \ref{theorem: L-cocycles}, the hypothesis $L_M \subseteq L_N$ means every cocircuit of $M$ is a union of cocircuits of $N$.  By \cite[Proposition 2.1.19]{oxley11}, the bases of any matroid are the minimal subsets which intersect every cocircuit.  Thus, $B_N$ must contain a basis for $M$, so $B_P$ and $B_M$ are sufficiently disjoint. 
	
	By Corollary \ref{corollary: linear-space-inclusion}, the containment $L_M \subseteq L_N$ implies the existence of a set $T$ of size $\rk(N)-\rk(M)$ and a matroid $Q$ on $E\sqcup T$ such that $L_M = L_Q \cap \B^E$ and $L_N = \pi_E(L_Q)$.  By Lemma \ref{lemma: minor-stable-sum}, $L_P$ and $L_Q$ are sufficiently moveable and moreover
	\begin{equation*}
		L_P \stsum L_M = (L_P \stsum L_Q) \cap \B^E  ~\text{ and }~ L_P \stsum L_N = \pi_E(L_P \stsum L_Q).
	\end{equation*}
	Since \[\dim(L_P\stsum L_N) = \rk(P\vee N) = \rk(P)+\rk(N)\] and similarly for $M$, we have
	\[\dim(L_P\stsum L_N) - \dim(L_P\stsum L_M) = \rk(N) - \rk(M) = |T|\]
	so we can now apply Corollary \ref{corollary: linear-space-inclusion} in the reverse direction to obtain $L_P \stsum L_M \subseteq L_P \stsum L_N$.
\end{proof}

%Add Colin's matroidal proof that weakens the hypotheses

\section{Transversal matroids}

In this concluding section we illustrate how several combinatorial results in the literature on transversal matroids have appealing algebraic or geometric translations in our framework.

\begin{conv}
In this section we use the notation $[k] := \{1,2,\ldots, k\}$ for $k\in \mathbb{N}$.  To avoid confusion with the notation $[M]$ for the basis indicator vector of a matroid, we shall only use lower-case letters for natural numbers and upper-case letters for matroids.
\end{conv}

A \emph{transversal matroid} is a matroid whose independent sets are the partial transversals of a set system.  Let us take a moment to recall this terminology.  For a ground set $E$ and a sequence of subsets $A_1,\ldots,A_m \subseteq E$ (called a \emph{set system}), a \emph{transversal} is a subset $S\subseteq E$ such that there exists a bijection $\psi : [m] \to S$ with $\psi(i) \in A_i$ for all $1 \le i \le m$; a \emph{partial transversal} is a subset $S\subseteq E$ that is a transversal of a sub-sequence of the set system.  The partial transversals of any set system form the independent sets of a matroid \cite[Theorem 1.6.2]{oxley11}.  

Transversal matroids are a well-studied and important class of matroids.  Relevant for us is that they correspond to the tropical linear spaces that are stable sums of lines \cite{fink15} and to the totally decomposable multivectors in the exterior algebra \cite{giansiracusa17}.  Indeed, a matroid $M$ is transversal if and only if it is the matroid union of rank-one matroids \cite[Proposition 11.3.7]{oxley11} and in this case it can be written as the matroid union of exactly $\rk(M)$ rank-one matroids \cite[Theorem 2.6]{bonin10}.

Writing a rank $d$ matroid $M$ on the ground set $E=[n]$ as a matroid union of $d$ rank-one matroids is equivalent to factoring the basis indicator vector
\[[M] = w_1\wedge\cdots \wedge w_d \in \bigwedge^d\B^E\] 
into a wedge of $d$ vectors $w_i\in \B^E$, and by \cite[Proposition 3.1.4]{giansiracusa17} this is equivalent to choosing a $d\times n$ matrix over $\B$ whose nonzero maximal minors are precisely the bases of $M$ (that is, a $d\times d$ sub-permanent indexed by columns $I \in \binom{E}{d}$ is $1\in\B$ if and only if $I$ is a basis of $M$).  Concretely, the rows of the matrix are the vectors $w_i$ which in turn are the basis indicator vectors of the rank one matroids in the matroid union.  We summarize this situation as follows:

\begin{definition}
A \emph{$\B$-presentation} of a rank $d$ transversal matroid $M$ on $E=[n]$ is a $d\times n$ matrix over $\B$ such that the vector of maximal minors equals the basis indicator vector $[M]$.
\end{definition}

\begin{remark}
While over a field the vector of maximal minors of a full rank matrix is the same data as the row space, in the tropical setting the row space is not well-behaved (cf., \cite[\S5]{yu-yuster}) so we really must work with tropical Pl\"ucker vectors.  Indeed,  the associated tropical linear space is the stable sum of the rows, which in general properly contains the span of the rows.  Nonetheless, it is convenient to think of transversal matroids as the matroids that are representable over $\B$.
\end{remark}

\subsection{Tropical analogue of row operations}

Let us continue the analogy brought up in the preceding remark.  Over a field $k$, if we have a $d$-dimensional linear subspace of an $n$-dimensional vector space and we represent it as the row space of a full-rank $d\times n$ matrix $A$, then the full set of $d\times n$ matrices representing this same linear space is the $\GL_d$-orbit of $A$.  These orbits are the fibers of the Stiefel map $k^{d\times n} \dashrightarrow \Gr(d,n)$ sending a matrix to its row space, viewed as a point of the Grassmannian.  Gauss-Jordan elimination can thus be viewed as providing a set-theoretic section (it is discontinuous) of this map, since it specifies a unique representative of each $\GL_d$-orbit---namely, the reduced row echelon form of the matrix.  

Back in the idempotent world, we have the tropical Stiefel map 
$\B^{d\times n} \dashrightarrow \Gr^{trop}(d,n)$ sending each matrix 
with a nonzero $d\times d$ permanent to its vector of maximal minors, 
which is both a transversal matroid and a realizable tropical Pl\"ucker 
vector (hence point in the tropical Grassmannian).  This tropical 
Stiefel map more generally over $\T$ is the main topic of investigation 
in \cite{fink15}.  By restricting to $\B$ we can, in a sense, geometrize some combinatorial 
results from Bondy and Welsh in \cite{bondy-welsh,bondy72}.  For instance, let us call a $\B$-presentation \emph{maximal} (resp. \emph{minimal}) if changing any 0 to a 1 (resp. 1 to a 0) changes the corresponding transversal matroid.

\begin{theorem}
Each nonempty fiber of the rational map $\B^{d\times n} \dashrightarrow 
   \bigwedge^d\B^n$ sending a $\B$-matrix (with a nonzero maximal minor) 
   to its vector of maximal minors has a unique maximal element but in 
   general multiple distinct minimal elements.
\end{theorem}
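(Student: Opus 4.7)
The plan is to translate the statement into the language of set-system presentations of transversal matroids and invoke results of Bondy and Welsh \cite{bondy-welsh,bondy72}. A $\B$-matrix $A \in \B^{d\times n}$ with a nonzero $d\times d$ permanent corresponds to a presentation $(A_1,\ldots,A_d)$ of a rank-$d$ transversal matroid on $[n]$, where $A_i$ is the support of the $i$-th row; two matrices lie in the same fiber of the Stiefel map precisely when their row-support set systems present the same transversal matroid. The coordinatewise order on $\B^{d\times n}$ corresponds (after matching rows) to componentwise set inclusion, and the theorem reduces to statements about maxima and minima in the poset of presentations of a fixed rank-$d$ transversal matroid $M$.

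For the unique maximum, I would invoke the theorem of Bondy \cite{bondy72} that for any rank-$d$ transversal matroid there is, up to reindexing, a unique componentwise maximum among all presentations by exactly $d$ sets (expressible in terms of cyclic flats). The heart of the argument is an exchange-style observation that given two presentations of $M$, their sets can be paired so that the componentwise unions again form a presentation of $M$; iterating until no further enlargement is possible yields the unique maximum. Translated back, this says there is a unique $\B$-matrix in each nonempty fiber (up to the intrinsic row-permutation symmetry of matrices representing tropical Pl\"ucker vectors) in which changing any $0$ to $1$ alters the transversal matroid.

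For multiple distinct minimal elements, I would give the explicit example $M = U_{2,4}$: the two presentations $\{\{1,2,3\},\{1,2,4\}\}$ and $\{\{1,3,4\},\{1,2,4\}\}$ each realize every pair in $\binom{[4]}{2}$ as a partial transversal, so both present $U_{2,4}$. A direct check shows that removing any element from any set in either presentation destroys some basis, so both are minimal. Since they differ as multisets of sets, the associated matrices
\[
\begin{pmatrix} 1 & 1 & 1 & 0 \\ 1 & 1 & 0 & 1 \end{pmatrix}
\quad\text{and}\quad
\begin{pmatrix} 1 & 0 & 1 & 1 \\ 1 & 1 & 0 & 1 \end{pmatrix}
\]
are two genuinely distinct minimal elements of the fiber over $[U_{2,4}]$.

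The main obstacle is setting up the maximality claim precisely, since it must be interpreted modulo the row-permutation symmetry intrinsic to the Stiefel map; once this is done, the argument reduces to Bondy's exchange-style proof together with the straightforward dictionary between $\B$-matrices and set-system presentations of transversal matroids.
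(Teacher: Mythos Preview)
Your proposal is correct and takes essentially the same approach as the paper: both reduce the statement to the dictionary between $\B$-matrices and set-system presentations of transversal matroids and then invoke \cite[Theorem 1]{bondy72}. You supply more detail than the paper (which simply says ``This is an immediate translation of \cite[Theorem 1]{bondy72}''), including a sketch of Bondy's exchange argument and an explicit $U_{2,4}$ example for non-uniqueness of minima, but the underlying strategy is identical.
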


\begin{proof}
This is an immediate translation of \cite[Theorem 1]{bondy72}.
\end{proof}

\begin{remark}
Maximal $\B$-presentations can thus be viewed as a tropical analogue of reduced row echelon form, and the process of transforming a $\B$-presentation of a transversal matroid to the maximal $\B$-presentation is a tropical analogue of Gauss-Jordan elimination.  This is a slightly odd analogy since the tropicalization of a reduced row echelon form matrix is minimal, rather than maximal, as a $\B$-presentation of the corresponding transversal matroid.  However, there are other (unrelated) instances in tropical geometry where minimality and maximality are reversed under tropicalization---for instance, the maximal strata in the moduli space of tropical curves parameterize the dual graphs of the nodal curves in the minimal strata of the classical moduli space \cite{ACP}.  
\end{remark}

\subsection{Fundamental transversal matroids}

If a rank $d$ matroid on $[n]$ is representable over a field $k$, meaning that its bases correspond to the nonzero maximal minors of a $d\times n$ matrix over $k$, then by acting by $\GL_d$ we can transform a given $d\times d$ submatrix to the identity $I_d$ without changing the matroid represented by the matrix.  Since matroid isomorphisms are permutations of the ground set, this means every matroid representable over a field is isomorphic to a matroid representable by a matrix of the form $[I_d ~ A]$.  It turns out that only a special class of transversal matroids has a $\B$-presentation of this form.  Indeed, a  \emph{fundamental transversal matroid} (also called a \emph{principal transversal matroid}) is a matroid that has a basis $B$ such that each cyclic flat $F$ (i.e., $F$ is both a flat and a union of circuits) is spanned by $F\cap B$.  A known characterization of these (see, e.g., \cite{bonin-kung}) is that a rank $d$ matroid on $[n]$ is fundamental transversal if and only if it admits a $\B$-presentation by a $d\times n$ matrix with a $d\times d$ permutation matrix as a submatrix---or in other words, these are the matroids isomorphic to ones with a $\B$-presentation of the form $[I_d ~ A]$.  Due to the idempotency of addition over $\B$, this can be expressed in the following equivalent way:

\begin{theorem}
A rank $d$ transversal matroid on $[n]$ is fundamental transversal if and only if it admits a $\B$-presentation by a matrix inducing a surjective linear map $\B^{n} \twoheadrightarrow \B^d$.
\end{theorem}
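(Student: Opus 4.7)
The plan is to reduce the theorem to the known characterization (cited from Bonin--Kung in the paragraph before the theorem), namely that a rank $d$ transversal matroid on $[n]$ is fundamental transversal if and only if it admits a $\B$-presentation containing a $d\times d$ permutation matrix as a submatrix. Thus it suffices to show that a $d\times n$ matrix $A$ over $\B$ induces a surjective $\B$-module map $\B^n \twoheadrightarrow \B^d$ if and only if $A$ contains a $d\times d$ permutation submatrix.

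For the easy direction, if $A$ contains a $d\times d$ permutation submatrix, then the columns of $A$ include each standard basis vector $e_1,\ldots,e_d$ of $\B^d$, and these clearly generate $\B^d$, so the induced map is surjective.

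For the key direction, suppose the map $\B^n \twoheadrightarrow \B^d$ induced by $A$ is surjective, and let $v_1,\ldots,v_n \in \B^d$ denote its columns (i.e., the images of the standard basis of $\B^n$). Surjectivity means each standard basis vector $e_k \in \B^d$ can be written as $e_k = \sum_{j} c_j v_j$ for some $c_j \in \B$. Because $\B$-addition is idempotent and corresponds to union of supports, we get
\[
\{k\} \;=\; \supp(e_k) \;=\; \bigcup_{j:\, c_j \neq 0} \supp(v_j).
\]
This forces $\supp(v_j) \subseteq \{k\}$ for every $j$ with $c_j\neq 0$, and at least one such $v_j$ must be nonzero, giving $v_j = e_k$. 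Hence each $e_k$ literally appears among the columns of $A$, so those $d$ columns form (after reordering) a $d\times d$ permutation submatrix. This is essentially the uniqueness-of-basis statement for $\B^d$ recalled in the preliminaries, applied column-by-column.

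The argument is almost entirely formal once one has the known combinatorial characterization in hand, so there is no single ``hard step''; the main point to get right is the elementary but essential observation that in $\B^d$ the only way to express a standard basis vector $e_k$ as a $\B$-linear combination of vectors is to have $e_k$ itself appear as one of those vectors, which is what forces a genuine permutation submatrix (rather than merely a spanning set of columns) to be present.
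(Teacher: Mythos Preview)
Your proof is correct and follows exactly the approach the paper indicates: the paper simply asserts that, ``due to the idempotency of addition over $\B$,'' the permutation-submatrix characterization from Bonin--Kung is equivalent to surjectivity of the induced map, and you have correctly spelled out this equivalence. The key observation you isolate---that in $\B^d$ a standard basis vector $e_k$ can only be expressed as a $\B$-linear combination of vectors if $e_k$ itself occurs among them---is precisely the content of the idempotency remark.
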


In other words, the matroids corresponding to $\B$-matrices are the transversal matroids, and the ones corresponding to surjective $\B$-matrices are the fundamental transversal matroids---in contrast to the situation over a field where if a matroid is representable then the representing matrix is always surjective (i.e., full rank).

\bibliographystyle{amsalpha}
\bibliography{references}

\providecommand{\bysame}{\leavevmode\hbox to3em{\hrulefill}\thinspace}
\providecommand{\MR}{\relax\ifhmode\unskip\space\fi MR }
% \MRhref is called by the amsart/book/proc definition of \MR.
\providecommand{\MRhref}[2]{%
  \href{http://www.ams.org/mathscinet-getitem?mr=#1}{#2}
}
\providecommand{\href}[2]{#2}
\begin{thebibliography}{GGMS87}

\bibitem[ACP15]{ACP}
Dan Abramovich, Lucia Caporaso, and Sam Payne, \emph{The tropicalization of the
  moduli space of curves}, Ann. Sci. \'{E}c. Norm. Sup\'{e}r. (4) \textbf{48}
  (2015), no.~4, 765--809.

\bibitem[AH01]{alhawary01}
Talal~Ali Al-Hawary, \emph{Free objects in the category of geometries}, Int. J.
  Math. Math. Sci. \textbf{26} (2001), no.~12, 765--770.

\bibitem[AK05]{ardila-klivans}
F.~Ardila and C.~Klivans, \emph{The {B}ergman complex of a matroid and
  phylogenetic trees}, J. Combin. Theory Ser. B \textbf{96} (2005), no.~1,
  38--49.

\bibitem[BGW03]{BGW03}
Alexandre~V. Borovik, I.~M. Gelfand, and Neil White, \emph{Coxeter matroids},
  Progress in Mathematics, vol. 216, Birkh\"auser Boston, Inc., Boston, MA,
  2003.

\bibitem[Bix77]{bixby77}
Robert~E. Bixby, \emph{A simple proof that every matroid is an intersection of
  fundamental transversal matroids}, Discrete Math. \textbf{18} (1977), no.~3,
  311--312.

\bibitem[BKdM11]{bonin-kung}
Joseph~E. Bonin, Joseph P.~S. Kung, and Anna de~Mier, \emph{Characterizations
  of transversal and fundamental transversal matroids}, Electron. J. Combin.
  \textbf{18} (2011), no.~1, Paper 106, 16.

\bibitem[Bon72]{bondy72}
J.~A. Bondy, \emph{Presentations of transversal matroids}, J. London Math. Soc.
  (2) \textbf{5} (1972), 289--292.

\bibitem[Bon10]{bonin10}
Joseph Bonin, \emph{An introduction to transversal matroids}, Lecture notes
  available on author's webpage, 2010.

\bibitem[BW71]{bondy-welsh}
J.~A. Bondy and D.~J.~A. Welsh, \emph{Some results on transversal matroids and
  constructions for identically self-dual matroids}, Quart. J. Math. Oxford
  Ser. (2) \textbf{22} (1971), 435--451.

\bibitem[CDMS18]{FlagMatroids}
Amanda Cameron, Rodica Dinu, Mateusz Michalek, and Tim Seynnaeve, \emph{Flag
  matroids: algebra and geometry}, arXiv:1811.00272, 2018.

\bibitem[DW92]{dress92}
Andreas W.~M. Dress and Walter Wenzel, \emph{Valuated matroids}, Adv. Math.
  \textbf{93} (1992), no.~2, 214--250.

\bibitem[Fin13]{fink-chow}
Alex Fink, \emph{Tropical cycles and {C}how polytopes}, Beitr. Algebra Geom.
  \textbf{54} (2013), no.~1, 13--40.

\bibitem[FR15]{fink15}
Alex Fink and Felipe Rinc{\'o}n, \emph{Stiefel tropical linear spaces}, J.
  Combin. Theory Ser. A \textbf{135} (2015), 291--331.

\bibitem[Fre13]{frenk13}
Bart Frenk, \emph{Tropical varieties, maps and gossip}, Ph.D. thesis, Eindhoven
  University of Technology, 2013.

\bibitem[GG16]{GG1}
Jeffrey Giansiracusa and Noah Giansiracusa, \emph{Equations of tropical
  varieties}, Duke Math. J. \textbf{165} (2016), no.~18, 3379--3433.

\bibitem[GG17]{giansiracusa17}
\bysame, \emph{A {G}rassmann algebra for matroids}, Manuscripta Math.
  \textbf{156} (2017), no.~1, 187--213.

\bibitem[GGMS87]{GGMS87}
I.~M. Gelfand, R.~M. Goresky, R.~D. MacPherson, and V.~V. Serganova,
  \emph{Combinatorial geometries, convex polyhedra, and {S}chubert cells}, Adv.
  in Math. \textbf{63} (1987), no.~3, 301--316.

\bibitem[Gol99]{golan99}
Jonathan Golan, \emph{Semirings and their applications}, Springer
  Science+Business Media Dordrecht, Kluwer, 1999.

\bibitem[GS87]{GS87}
I.~M. Gelfand and V.~V. Serganova, \emph{Combinatorial geometries and the
  strata of a torus on homogeneous compact manifolds}, Uspekhi Mat. Nauk
  \textbf{42} (1987), no.~2(254), 107--134, 287.

\bibitem[Ham15]{hampe}
Simon Hampe, \emph{Tropical linear spaces and tropical convexity}, Electron. J.
  Combin. \textbf{22} (2015), no.~4, Paper 4.43, 20.

\bibitem[HP17]{heunen17}
Chris Heunen and Vaia Patta, \emph{The category of matroids}, Applied
  Categorical Structures (2017), 1--33.

\bibitem[Kap93]{kapranov-chow}
M.~M. Kapranov, \emph{Chow quotients of {G}rassmannians. {I}}, Adv. Soviet
  Math., vol.~16, Amer. Math. Soc., Providence, RI, 1993.

\bibitem[KK78]{higgs}
Douglas~G. Kelly and Daniel Kennedy, \emph{The {H}iggs factorization of a
  geometric strong map}, Discrete Math. \textbf{22} (1978), no.~2, 139--146.

\bibitem[Law76]{lawler76}
Eugene~L. Lawler, \emph{Combinatorial optimization: networks and matroids},
  Holt, Rinehart and Winston, New York-Montreal, Que.-London, 1976.

\bibitem[MR14]{maclagan-rincon}
D.~{Maclagan} and F.~{Rinc{\'o}n}, \emph{{Tropical schemes, tropical cycles,
  and valuated matroids}}, arXiv:1401.4654, to appear in JEMS, 2014.

\bibitem[MS15]{maclagan-sturmfels}
Diane Maclagan and Bernd Sturmfels, \emph{Introduction to tropical geometry},
  Graduate Studies in Mathematics, vol. 161, American Mathematical Society,
  Providence, RI, 2015.

\bibitem[MT01]{murota01}
Kazuo Murota and Akihisa Tamura, \emph{On circuit valuation of matroids},
  Advances in Applied Mathematics \textbf{26} (2001), no.~3, 192--225.

\bibitem[Mur97]{murota97}
Kazuo Murota, \emph{Matroid valuation on independent sets}, J. Combin. Theory
  Ser. B \textbf{69} (1997), no.~1, 59--78.

\bibitem[Mur10]{murota10}
\bysame, \emph{Matrices and matroids for systems analysis}, Algorithms and
  Combinatorics, vol.~20, Springer-Verlag, Berlin, 2010, Corrected paperback
  reprint of the 2000 original [MR1739147].

\bibitem[Oxl11]{oxley11}
James Oxley, \emph{Matroid theory}, second ed., Oxford Graduate Texts in
  Mathematics, vol.~21, Oxford University Press, Oxford, 2011.

\bibitem[Pay09]{payne}
Sam Payne, \emph{Analytification is the limit of all tropicalizations}, Math.
  Res. Lett. \textbf{16} (2009), no.~3, 543--556.

\bibitem[Pit14]{pitsoulis14}
Leonidas~S. Pitsoulis, \emph{Topics in matroid theory}, SpringerBriefs in
  Optimization, Springer, New York, 2014.

\bibitem[Rec89]{recski89}
Andr\'as Recski, \emph{Matroid theory and its applications in electric network
  theory and in statics}, Algorithms and Combinatorics, vol.~6,
  Springer-Verlag, Berlin; Akad\'emiai Kiad\'o (Publishing House of the
  Hungarian Academy of Sciences), Budapest, 1989.

\bibitem[RI80]{recski-iri}
Andr\'as Recski and Masao Iri, \emph{Network theory and transversal matroids},
  Discrete Appl. Math. \textbf{2} (1980), no.~4, 311--326.

\bibitem[Sha13]{shaw13}
Kristin~M. Shaw, \emph{A tropical intersection product in matroidal fans}, SIAM
  J. Discrete Math. \textbf{27} (2013), no.~1, 459--491. \MR{3032930}

\bibitem[Spe08]{speyer08}
David Speyer, \emph{Tropical linear spaces}, SIAM J. Discrete Math. \textbf{22}
  (2008), no.~4, 1527--1558.

\bibitem[SS04]{speyer-sturmfels}
David Speyer and Bernd Sturmfels, \emph{The tropical {G}rassmannian}, Adv.
  Geom. \textbf{4} (2004), no.~3, 389--411.

\bibitem[Wan94]{wang94}
Hua~Xiong Wang, \emph{Injective hulls of semimodules over additively-idempotent
  semirings}, Semigroup Forum \textbf{48} (1994), no.~3, 377--379.

\bibitem[Wel76]{welsh76}
D.~J.~A. Welsh, \emph{Matroid theory}, Academic Press [Harcourt Brace
  Jovanovich, Publishers], London-New York, 1976, L. M. S. Monographs, No. 8.

\bibitem[Whi86]{white86}
Neil White (ed.), \emph{Theory of matroids}, Encyclopedia of Mathematics and
  its Applications, vol.~26, Cambridge University Press, Cambridge, 1986.

\bibitem[Whi92]{white92}
Neil White (ed.), \emph{Matroid applications}, Encyclopedia of Mathematics and
  its Applications, vol.~40, Cambridge University Press, Cambridge, 1992.

\bibitem[YY07]{yu-yuster}
Josephine Yu and Debbie Yuster, \emph{Representing tropical linear spaces by
  circuits}, Proceedings of FPSAC 2007 (2007).

\end{thebibliography}

\end{document}